
\documentclass[12pt]{amsart}  
\usepackage{amssymb,amsmath,amsthm}
\usepackage[english]{babel}
\usepackage{geometry}                
\geometry{letterpaper}                  
\usepackage{graphicx}
\usepackage{amssymb}
\usepackage{epstopdf}
\usepackage{hyperref}

\DeclareGraphicsRule{.tif}{png}{.png}{`convert #1 `dirname #1`/`basename #1 .tif`.png}

\title{The coin-turning walk and its scaling limit}

\author{J\'anos Engl\"ander}\thanks{J.E.'s research was supported in part by Simons Foundation Grant  579110.}
\address{Department of Mathematics, University of Colorado at Boulder, Boulder, CO-80309, USA}
\email{janos.englander@colorado.edu}
\urladdr{https://www.colorado.edu/math/janos-englander}

\author{ Stanislav Volkov}\thanks{S.V.'s research was  supported in part by Swedish Research Council grant VR~2014-5157.}
\address{Centre for Mathematical Sciences\\ Lund University\\ Lund 22100-118, Sweden}
\email{s.volkov@maths.lth.se}
\urladdr{http://www.maths.lth.se/~s.volkov/}

\author{Zhenhua Wang}
\address{Department of Mathematics, University of Colorado at Boulder, Boulder, CO-80309, USA}
\email{zhenhua.wang@colorado.edu}
\urladdr{https://www.colorado.edu/math/zhenhua-wang}

\keywords{coin-turning, random walks, scaling limits, time-inhomogeneous Markov-processes, Invariance Principle, cooling dynamics, heating dynamics.}
\subjclass[2010] {60G50, 60F05,60J10}
\date{\today}

\newtheorem*{thma}{Theorem A}

\newtheorem*{propD}{Proposition D1972}

\newtheorem{theorem}{Theorem}
\newtheorem{definition}{Definition}
\newtheorem{lemma}{Lemma}
\newtheorem{proposition}{Proposition}

\newtheorem{assumption}{Assumption}

\newtheorem{remark}{Remark}
\newtheorem{example}{Example}

\newtheorem{problem}{Problem}
\newtheorem{question}{Question}

\newcommand{\Var}{\mathsf{Var}}
\newcommand{\Cov}{\mathsf{Cov}}
\newcommand{\Corr}{\mathsf{Corr}}

\def \a   {\alpha}
\def \b   {\beta}
\def \g   {\gamma}

\def \eps {\varepsilon}
\def\E{{\mathbb{E}}}
\def\N{{\mathbb{N}}}
\def\F{\mathcal{F}}
\def\P{{\mathbb{P}}}
\def\R {{\mathbb{R}}}
\def\Mc{{\mathcal{M}}}
\def\Ic{{\mathcal{I}}}
\def\ds{\displaystyle}
\begin{document}

\begin{abstract}
Let $S$ be the random walk obtained from ``coin turning" with some sequence $\{p_n\}_{n\ge 2}$, as introduced in~\cite{EV2018}. In this paper we investigate the scaling limits of $S$ in the spirit of the classical Donsker invariance principle, both for the heating and for the cooling dynamics.

We prove that an invariance principle, albeit with a non-classical scaling, holds for ``not too small" sequences, the order const$\cdot n^{-1}$ (critical cooling regime) being the threshold. At and below this critical order, the scaling behavior is dramatically different from the one above it. The same order is also the critical one for the Weak Law of Large Numbers to hold.

In the critical cooling regime, an interesting process  emerges: it is a continuous, piecewise linear, recurrent process, for which the one-dimensional marginals are Beta-distributed.

We also investigate the recurrence of the walk and its scaling limit, as well as the ergodicity and mixing of the $n$th step of the walk.
\end{abstract}
\maketitle

\tableofcontents
\section{Introduction}
We start with reviewing the notion of the  {\it coin turning process}, which has been introduced recently in~\cite{EV2018}.

Let $p_2,p_3, p_4...$  be a given deterministic sequence of numbers such that $p_n\in[0,1]$ for all $n$; define also $q_n:=1-p_n$. We define the following time-dependent ``coin turning process" $X_n\in\{0,1\}$, $n\ge 1$, as follows. Let $X_1=1$ (``heads") or $=0$ (``tails") with probability $1/2$. For $n\ge 2$,  set recursively
$$
X_n:=\begin{cases} 
1-X_{n-1},&\text{with probability } p_n;\\
X_{n-1},&\text{otherwise},
\end{cases}
$$
that is, we turn the coin over with probability $p_n$ and do nothing with probability~$q_n$. (Equivalently, one can define $p_1=1/2$ and $X_1\equiv 0$.)

Consider $\overline X_N:= \frac1N \sum_{n=1}^N X_n$, that is, the  empirical frequency of $1$'s (``heads") in the sequence of $X_n$'s. We are interested in the asymptotic behavior of this random variable when $N\to\infty$. Since we are interested in limit theorems, it is convenient to consider a centered version of the variable $X_n$;  namely $Y_n:=2X_n-1\in\{-1,+1\}$. We have then  
$$
Y_n:=\begin{cases}
-Y_{n-1},&\text{with probability } p_n;\\
Y_{n-1},&\text{otherwise}.
\end{cases}
$$
Let  also $\F_n:=\sigma(Y_1,Y_2,...,Y_n),\ n\ge 1$.

Note that the sequence $\{Y_n\}$ can be defined equivalently as follows: 
$$
Y_n:= (-1)^{\sum_{i=1}^n W_i},
$$ 
where $W_1,W_2,W_3,...$ are independent Bernoulli variables with parameters $p_1,p_2,p_3,...$, respectively, and $p_1=1/2$. 
\begin{remark}[Poisson binomial random variable]
The number of turns that occurred up to $n$, that is $\sum_{i=2}^n W_i$, 
is a {\it Poisson binomial random variable}.$\hfill\diamond$ 
\end{remark}


For the centered variables $Y_n$, we have $Y_j=Y_i(-1)^{\sum_{i+1}^jW_k},\ j>i$, and so, using $\Corr$ and $\Cov$ for correlation and covariance, respectively, one has
\begin{align}\label{eq_eij}
 \Corr(Y_i,Y_j)&=\Cov(Y_i,Y_j)=\E( Y_iY_j)=\E(-1)^{\sum_{i+1}^jW_k}\\
&\ \ \ \ =\prod_{i+1}^j \E (-1)^{W_{k}}=\prod_{k=i+1}^j (1-2p_k)
=:e_{i,j};\nonumber\\
 \E (Y_j\mid Y_i)&=Y_i \E(-1)^{\sum_{i+1}^jW_k}=e_{i,j}Y_i.\label{onemore}
\end{align}
The quantity $e_{i,j}$ will play an important role throughout the paper.

Next, we define our  basic object of interest.
\begin{definition}[Coin-turning walk]
The random walk $S$ on $\mathbb Z$ corresponding to the coin-turning, will be called the \emph {coin-turning walk}. Formally, $S_n:=Y_1+...+Y_n$ for $n\ge 1$; we can additionally define $S_0:=0$, so the first step is to the right  or to the left with equal probabilities. As usual, we then can extend $S$ to a {\em continuous time process}, by linear interpolation.
\end{definition}
\begin{remark}
Even though $Y$ is Markovian, $S$ is not. However, the 2-dimensional process $U$ defined by $U_n:=(S_{n},S_{n+1})$ is Markovian. It lives on a ladder embedded into $\mathbb Z^2$. See Figure~\ref{ladder}.$\hfill\diamond$
\end{remark}

\begin{figure}\label{ladder}
  \centering
\setlength{\unitlength}{9mm}
\begin{picture}(11,11)(0,9)
\put(0,9){\vector(0,1){9}}
\put(0,9){\vector(1,0){9}}
\multiput(0,10)(1,1){8}{\vector(1,-1){0.9}}
\multiput(1.1,9)(1,1){8}{\vector(-1,1){0.9}}
\multiput(0,10)(1,1){7}{\vector(1,1){0.9}}
\multiput(2,10)(1,1){7}{\vector(-1,-1){0.9}}
\put(0.5,17){$S_{n+1}$}
\put(8,9.5){$S_{n}$}
\end{picture}
\caption{The process of ordered pairs $U_n:=(S_n,S_{n+1})$ is a Markov chain}
\end{figure}
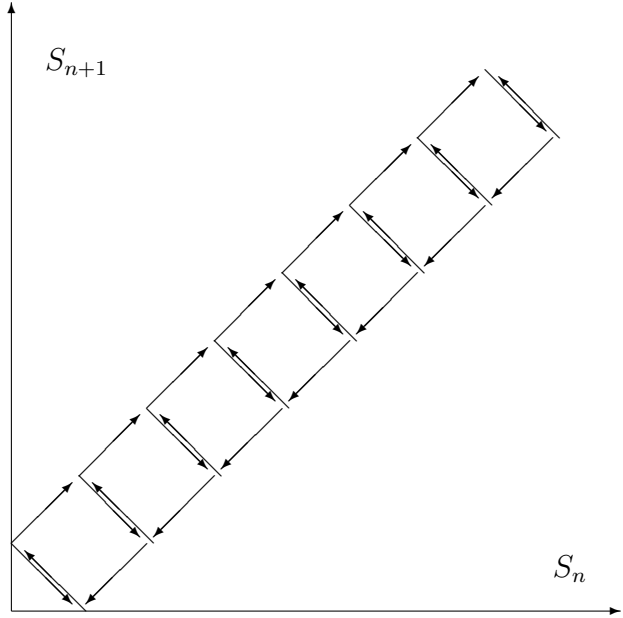

In~\cite{EV2018}, several scaling limits of the form $\lim_{n\to\infty}\mathtt{Law} \left(\frac{S_n}{b_{n}}\right)=\mathtt{L}$, have been established, where $\{b_n\}_{n\ge 1}$ is an appropriate sequence (depending on the sequence of $p_n$'s) tending to infinity and $\mathtt L$ is a non-degenerate  probability law. In~\cite{EV2018} the focus was on the $\lim_{n\to\infty} p_n=0$ case.

\begin{remark}[Supercritical cases] Note that if $\sum_n p_n<\infty$ then by the Borel-Cantelli lemma, only finitely many turns will occur a.s.; therefore the $X_j$'s will eventually become all ones or all zeros, and hence
$$
\overline X_N\to \zeta\text{ a.s.},
$$
where $\zeta\in\{0,1\}$. By the symmetry of the definition with respect to heads and tails
(or, by the bounded convergence theorem), $\zeta$ is a {\sf Bernoulli}($1/2$) random variable.

Similarly, if $\sum_n q_n<\infty$, then $S$ will be eventually stuck at two neighboring integers, again, by the Borel-Cantelli lemma. $\hfill\diamond$
\end{remark}
These two trivial cases (we call them ``lower supercritical"  and  ``upper-supercritical" cases) are not considered, and so we have the following assumption.
\begin{assumption}[Divergence]\label{div.ass} In the sequel we are going to assume that $\sum_n p_n=\infty$ and also $\sum_n q_n=\infty$.
\end{assumption}

\section{Mixing} Unlike in \cite{EV2018} and in the previous section, we now {\it do not} randomize the walk with taking $Y_1$ to be a symmetric random variable.  Nevertheless, it is still true for the indicators of turns $W_k$, that $Y_j=Y_i(-1)^{\sum_{i+1}^jW_k},\ j>i$, and that for $e_{i,j}=\prod_{k=i+1}^j (1-2p_k)$ we have $\E (Y_j\mid Y_i)=Y_i \E(-1)^{\sum_{i+1}^jW_k}=e_{i,j}Y_i,$ hence $\E(Y_iY_j)=e_{i,j}$.
\subsection{Characterization of mixing}
We will say that the sequence of random variables $(Y_n)_{n\ge 1}$ satisfies {\bf the mixing condition} if 
\begin{equation}\label{mixing.eij}
\lim_{j\to\infty} e_{ij}=0, \forall i\in \N.
\end{equation}
Under mixing,  $\lim_{j\to\infty}\E (Y_j\mid Y_i)=0$, so $Y_j$ ``becomes symmetrized" for $i$ fixed and large $j$. Also, $\lim_{j\to\infty}\E(Y_i\, Y_j)=0$ and $\lim_{j\to\infty}\E{Y_j}=0$, hence 
\begin{equation}\label{mixing}
\lim_{j\to\infty} \Cov(Y_j,Y_i)=0,
\end{equation} in accordance with the usual notion of mixing.

Mixing has a very simple characterization in terms of the sequence $\{p_n\}_{n\ge 1}$.
\begin{proposition}[Condition for mixing]\label{mixing.prop}
Mixing holds if and only if 
\begin{equation}\label{mixing.p}
\sum_n \min(p_n,q_n)=\infty.
\end{equation}
\end{proposition}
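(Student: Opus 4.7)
The plan is to reduce the mixing condition to a standard infinite-product criterion by noting that the absolute values $|1-2p_k|$ have a clean expression in terms of $\min(p_k,q_k)$.

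First I would observe that regardless of whether $p_k \le 1/2$ or $p_k > 1/2$, one always has $|1 - 2p_k| = 1 - 2\min(p_k,q_k)$. Indeed, if $p_k \le 1/2$ then $\min(p_k,q_k)=p_k$ and $1-2p_k\ge 0$; if $p_k>1/2$ then $\min(p_k,q_k)=q_k=1-p_k$ and $|1-2p_k|=2p_k-1=1-2q_k$. Setting $a_k:=2\min(p_k,q_k)\in[0,1]$, this gives
\begin{equation*}
|e_{i,j}| \;=\; \prod_{k=i+1}^{j}|1-2p_k| \;=\; \prod_{k=i+1}^{j}(1-a_k).
\end{equation*}
Since $e_{i,j}\to 0$ is equivalent to $|e_{i,j}|\to 0$, the mixing condition \eqref{mixing.eij} becomes: for every $i$, the infinite product $\prod_{k>i}(1-a_k)$ equals $0$. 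By translation this is independent of $i$ (a finite prefix never affects whether the tail product vanishes), so I only need to show this vanishing is equivalent to $\sum_k a_k=\infty$, i.e., to \eqref{mixing.p}.

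Next I would handle the two implications by the classical product/sum comparison. If some $a_k=1$ (i.e.\ $p_k=1/2$) then both conditions hold trivially, so assume $a_k<1$ for all $k$. Using $\log(1-x)\le -x$ for $x\in[0,1)$, if $\sum a_k=\infty$ then $\sum \log(1-a_k)=-\infty$, whence $\prod(1-a_k)=0$, giving mixing. Conversely, if $\sum a_k<\infty$, then $a_k\to 0$, so eventually $a_k\le 1/2$, where $\log(1-a_k)\ge -2a_k$; hence $\sum \log(1-a_k)>-\infty$, so the tail product is strictly positive and mixing fails.

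This argument is essentially routine once the identity $|1-2p_k|=1-2\min(p_k,q_k)$ is in hand; I do not expect a genuine obstacle, the only bookkeeping point being the symmetric handling of the cases $p_k\le 1/2$ and $p_k>1/2$, which is exactly what the $\min(p_k,q_k)$ formulation captures. The proof is therefore a two-line reduction followed by the standard $\sum$-versus-$\prod$ dichotomy.
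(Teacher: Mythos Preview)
Your approach is essentially the same as the paper's: both reduce to the identity $|1-2p_k|=1-2\min(p_k,q_k)$ and then invoke the standard equivalence between $\prod(1-a_k)=0$ and $\sum a_k=\infty$ for $a_k\in[0,1)$.

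There is, however, one slip in the case analysis. The claim that ``if some $a_k=1$ then both conditions hold trivially'' is only correct when \emph{infinitely many} $a_k$ equal $1$. If only finitely many $p_k=1/2$, say the last one at index $k_0$, then for $i\ge k_0$ the tail product $\prod_{k>i}(1-a_k)$ has no zero factor and is not automatically $0$, and $\sum a_k$ need not diverge either; so neither condition holds ``trivially.'' Relatedly, your ``independence of $i$'' claim fails for exactly this reason: removing a finite prefix that contains a zero factor can change whether the product vanishes. The fix is precisely what the paper does: when only finitely many $p_k=1/2$, pass to indices $i\ge k_0$, where all factors lie in $(0,1]$ and your log-comparison applies cleanly; for $i<k_0$ the tail product vanishes automatically because it contains a zero factor. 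This is a bookkeeping issue rather than a missing idea.
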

\begin{proof}[Proof of Proposition~\ref{mixing.prop}]
Since $$\min(p_i,q_i)=\begin{cases}
p_i,&\text{if }p_i\le 1/2;\\
q_i=1-p_i,&\text{if }p_i>1/2,
\end{cases}
$$
we have
\begin{align*}
&|e_{i,j}|=\left|\prod_{k=i+1}^j (1-2p_k)\right|=\prod_{i<k\le j,p_k\le 1/2}(1-2p_k)
\times
\prod_{i<k\le j,p_k>1/2} (1-2q_k)=\\
&\prod_{k=i+1}^j (1-2\min(p_k,q_k)).
\end{align*}

When $p_k\neq 1/2$ for all $k\ge 1$,  \eqref{mixing.eij} and~\eqref{mixing.p} are equivalent by a well known result about infinite products; when $p_k=1/2$ infinitely often, \eqref{mixing.eij} and~\eqref{mixing.p} are clearly simultaneously satisfied.

In all other cases, define $k_0:=\max\{k\in \N\mid  p_k= 1/2\}$. For $i< k_0$, $e_{i,j}=0$ for all large $j$, while for $i\ge k_0$, \eqref{mixing.eij} is tantamount to~\eqref{mixing.p}, just like in the first case.
\end{proof}
\subsection{Why is mixing a natural assumption?}
The mixing condition is stronger than Assumption~\ref{div.ass} if $p_k$ keeps crossing the line $1/2$ (i.e.  $\liminf p_k<1/2<\limsup p_k$), while they are equivalent when $p_k$ settles on one side of $1/2$ eventually.

In the first case Assumption \ref{div.ass} is automatically satisfied, as $p_k\ge 1/2$ i.o. and also $q_k\ge 1/2$ i.o. Defining $I:=\{i\in \N:\ p_i\le 1/2\}$, we see that the mixing condition is nevertheless violated  if and only if
$$
\sum_i \min(p_i,q_i)=\sum_{i\in I} p_i+\sum_{i\not \in I} q_i<\infty,
$$
that is, when $\sum_{i\in I} p_i<\infty$ and $\sum_{i\not \in I} q_i<\infty$. 
In this case, recalling  that $W_i$ is the indicator of a turn at time $i$, by Borel-Cantelli,
$$
\P\left( \exists n_0\in \N:
\ W_{i}=\mathbf{1}_{I^c}(i)  \text{ for all }i\ge n_0 \mid \F_1\right)=1,
$$
where $\mathbf{1}_{I^c}$ is the characteristic function of the set $\N\setminus I$. That is, along $I$, ``turning" eventually stops, while along $\N\setminus I$, ``staying" eventually stops. 

Our conclusion is that when mixing does not hold, the random walk is ``eventually deterministic", and thus the setup is less interesting. For example, from the point of view of recurrence, the problem becomes a question about a deterministic process; whether that process takes any integer value infinitely many times depends simply on the set $I$ (as long as $\ds\sum_{i\in I} p_i<\infty$ and $\ds\sum_{i\not \in I} q_i<\infty$.)

To have a concrete example, let $I=\{2,4,6,\dots\}$ be the set of positive even integers. Then, for large times, the walk will alternate between taking two consecutive steps up and taking two consecutive steps down. This excludes recurrence of course, as the process becomes stuck at some triple of consecutive integers. We summarize the above discussion in Figure \ref{fig.mixing}.

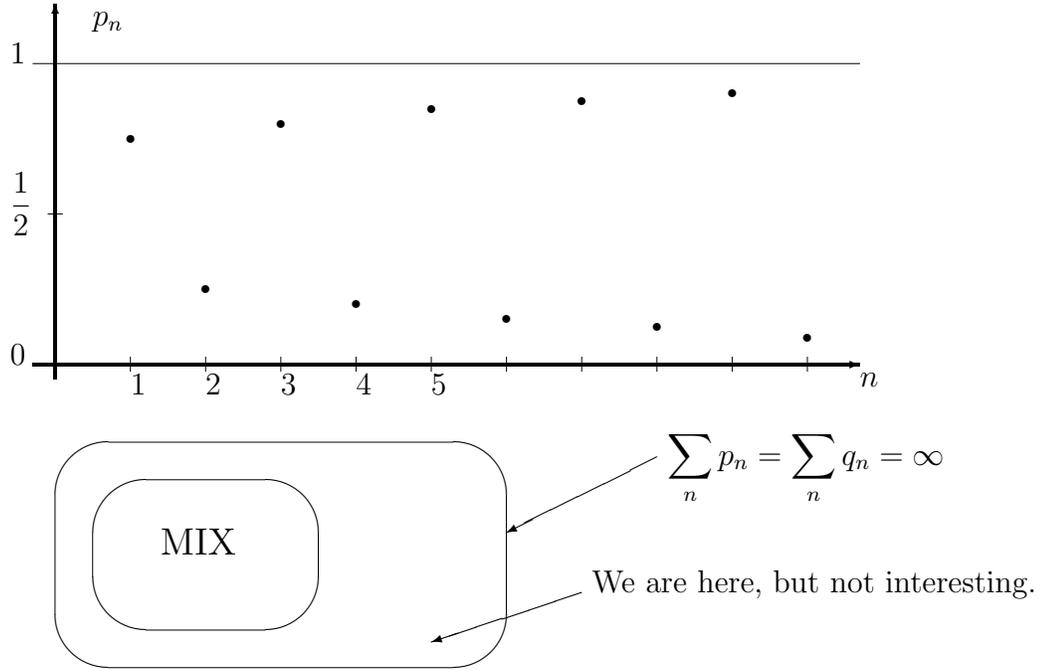
\begin{figure}
\centering
\setlength{\unitlength}{10mm}
\begin{picture}(12,5)(0,0.5)
\linethickness{0.4mm}
\put(1,0.8){\vector(0,1){5}}
\put(0.7,1){\vector(1,0){11}}
\linethickness{0.1mm}
\put(0.7,5){\line(1,0){11}}
\put(0.9,3){\line(1,0){0.2}}
\multiput(2,0.9)(1,0){10}{\line(0,1){0.2}}
\put(2,0.6){$1$}
\put(3,0.6){$2$}
\put(4,0.6){$3$}
\put(5,0.6){$4$}
\put(6,0.6){$5$}
\multiput(2,4)(2,0.2){3}{\circle*{0.1}}
\multiput(3,2)(2,-0.2){3}{\circle*{0.1}}
\multiput(8,4.5)(2,0.1){2}{\circle*{0.1}}
\multiput(9,1.5)(2,-0.15){2}{\circle*{0.1}}
\put(11.7,0.7){$n$}
\put(1.5,5.5){$p_n$}
\put(0.4,1){$0$}
\put(0.4,5){$1$}
\put(0.4,3){$\displaystyle \frac 12$}
\end{picture}

\setlength{\unitlength}{10mm}
\begin{picture}(12,4)(0,0)
\linethickness{0.1mm}

\put(4,2){\oval(6,3)}
\put(3,2){\oval(3,2)}

\put(9,3.2){\ $\displaystyle \sum_n p_n=\sum_n q_n=\infty$}
\put(9,3.3){\vector(-2,-1){2}}

\put(8,1.5){\ We are here, but not interesting.}
\put(8,1.5){\vector(-3,-1){2}}

\put(2.4,2){{\large MIX}}
\end{picture}
\caption{Even if $\sum_n p_n=\sum_n q_n=\infty$ holds, mixing  may fail, as it is equivalent to $\sum_n \min(p_n,q_n)<\infty$.}

\label{fig.mixing}
\end{figure}

We conclude this Section with some notation.

Throughout the paper, $c_n\sim d_n$ will mean that $\lim_{n\to\infty} c_n/d_n= 1$;  also, $c_n=o(d_n)$ will mean that $\lim_{n\to\infty} c_n/d_n= 0$.
Convergence in distribution will also be denoted by $\overset{d}{\rightarrow}$.  

\section{Review of relevant literature}
\subsection{Some  results from~\cite{EV2018}}
Some of the basic results of~\cite{EV2018} are summarized in the following theorem.
\begin{thma}\label{Old.thms} Let $S$ denote the coin-turning walk.
\begin{itemize}
\item[(i)]
(``Time-homogeneous case".) Let $p_n=c$ for all $n\ge 1$, where $0<c<1$. 
Then 
$$
{\sf Law}\left(\frac{S_N}{\sqrt{N}}\right)\to {\sf Normal}\left(0,\sigma_c^2\right),\quad where\ 
\sigma^2_c:=1+2\sum_{i=1}^{\infty} \Cov (Y_i,Y_j)
=\frac{1-c}{c}.
$$
\item[(ii)]
(``Lower critical case".) Fix $a>0$ and let
$$
p_n=\frac{a}n,\quad n\ge n_0
$$
for some $n_0\in \mathbb N$. Then\footnote{A nice exercise, left to the reader, is to show that when the sequence is precisely $(p_1=1/2), p_2=1/3,p_3=1/4,p_4=1/5$, ... , $\frac{S_N}{N}$ has precisely discrete uniform law for each $N$. This fact, as M\'arton Bal\'azs pointed out to us,  can be related to P\'olya urns. The more general connection of our model with P\'olya urns will be presented in a forthcoming paper.}
$$
{\sf Law}\left(\frac{S_N}{N}\right)\to {\sf Beta}(a,a),
$$
where ${\sf Beta}(\a, \b)$ denotes the Beta distribution with parameters $\a,\b$.
\item[(iii)]
(``Lower subcritical case".)
Fix $\g,a>0$ and let
$$
p_n=\frac{a}{n^\g},\quad n\ge n_0
$$
for some $n_0\in\N$. (Since  $\g>1$ corresponds to the supercritical case, we assume that $0<\g<1$.) Then
$$
{\sf Law}\left(\frac{S_N}{\sqrt{N^{1+\g}}}\right)\to {\sf Normal}\left(0,\sigma^2_{a,\g}\right),\quad where \ 
\sigma^2_{a,\g}:=\frac 1{a(1+\g)}.
$$
\end{itemize}
\end{thma}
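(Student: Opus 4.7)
The unifying observation is the identity $\E(Y_iY_j)=e_{i,j}=\prod_{k=i+1}^{j}(1-2p_k)$ (we use the symmetrized setup of the introduction, so that $\E Y_n=0$), which reduces all second-moment computations to an explicit product. My plan is to compute $\Var(S_N)$ asymptotically in each regime and then upgrade the variance information to a distributional limit by a method adapted to each case.

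For part (i), $\{Y_n\}$ is a uniformly ergodic two-state Markov chain on $\{-1,+1\}$ with flip probability $c$, so $e_{i,j}=(1-2c)^{j-i}$ and, by summing the geometric series,
\[
\Var(S_N)/N\ \longrightarrow\ 1+2\sum_{k\ge 1}(1-2c)^k=\tfrac{1-c}{c}.
\]
The Gaussian limit then follows from the standard CLT for additive functionals of a uniformly ergodic finite Markov chain, proved for instance via a Poisson-equation/martingale decomposition. For part (iii), $\log e_{i,j}\approx -2a\sum_{k=i+1}^{j}k^{-\g}\approx -\tfrac{2a}{1-\g}(j^{1-\g}-i^{1-\g})$; substituting $j=i+ti^{\g}$ yields $\sum_{j>i}e_{i,j}\sim i^{\g}/(2a)$ and hence $\Var(S_N)\sim N^{1+\g}/(a(1+\g))$, matching $\sigma_{a,\g}^2$. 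Since the correlation length at time $i$ is of order $i^{\g}\ll i$, a decomposition of $S_N$ into blocks of length slightly longer than $i^{\g}$, combined with a Lyapunov/Lindeberg argument for the resulting weakly dependent array, should deliver the Gaussian limit. The main obstacle in (iii) is the time-inhomogeneity: one must uniformly control third/fourth moments across blocks and verify that cross-block correlations contribute negligibly.

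Part (ii) is qualitatively different because $S_N$ is of order $N$ and the limit has compact support, making the method of moments natural. The key combinatorial identity is
\[
\E\!\left(\prod_{\ell=1}^{2k}Y_{i_\ell}\right)\ =\ \prod_{j=1}^{k}e_{i_{2j-1},i_{2j}},\qquad i_1\le i_2\le\cdots\le i_{2k},
\]
obtained by grouping the flip indicators $W_m$ by the parity of $|\{\ell:i_\ell\ge m\}|$: only the ``gaps'' $(i_{2j-1},i_{2j}]$ contribute odd counts. Using $e_{i,j}\sim(i/j)^{2a}$ (from $\log e_{i,j}\sim -2a\log(j/i)$, sharpened by a $\Gamma$-function ratio), the sum over strictly ordered tuples for $\E(S_N^{2k})/N^{2k}$ becomes a Riemann sum for
\[
(2k)!\int_{0\le x_1\le\cdots\le x_{2k}\le 1}\prod_{j=1}^{k}\left(\frac{x_{2j-1}}{x_{2j}}\right)^{2a}\,dx_1\cdots dx_{2k}.
\]
I would then verify that this equals the $2k$-th moment of the affine image on $[-1,1]$ of $\mathsf{Beta}(a,a)$, either directly or by recognising the integral via the Dirichlet/Beta function identities. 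The main obstacle here is the combinatorial bookkeeping for the moment identity together with the verification that contributions from tuples with repeated indices are $o(N^{2k})$; once all moments match and the limit has compact support, convergence in distribution is automatic.
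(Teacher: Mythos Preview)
The paper does not contain a proof of Theorem~A: it is presented there explicitly as a summary of results already established in~\cite{EV2018}, so there is no ``paper's own proof'' of this statement to compare your attempt against.

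That said, your sketches are sound and track what one would expect from~\cite{EV2018}. For~(i), the Markov-chain CLT is the standard route. For~(iii), your variance asymptotics are correct and a blocking/Lindeberg argument is the natural way to upgrade them; the only real work is the uniform moment control you flag. For~(ii), the pairing identity $\E\prod_{\ell}Y_{i_\ell}=\prod_j e_{i_{2j-1},i_{2j}}$ is exactly right, and the resulting Riemann-sum integral does match the even moments of the $[-1,1]$-rescaled $\mathsf{Beta}(a,a)$ law, so the method of moments closes.

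For context, the present paper proves strictly stronger, process-level analogues of~(i) and~(iii) (Theorems~\ref{warmup} and~\ref{Lower.Main.Thm}(4)) by a different, unified method: the martingale approximation $M_n=S_{n-1}+a_nY_n$ with $a_n=\sum_{i\ge 0}e_{n,n+i}$, fed into Drogin's martingale invariance principle. That machinery does \emph{not} apply to the critical case~(ii), since there $a_n$ is either infinite or fails $a_n=o(\sqrt{v_n})$; the paper instead establishes a functional zigzag limit (Theorem~\ref{Lower.Main.Thm}(3)) via convergence of the turning-point process to a Poisson point process, and recovers the $\mathsf{Beta}(a,a)$ marginal as a corollary. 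Your moment approach to~(ii) is thus genuinely different from, and more direct than, what the present paper does for the corresponding regime.
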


\subsection{Recent results by Bena\"{\i}m, Bouguet and Cloez} 
In a recent follow up paper to~\cite{EV2018} by Bouguet and Cloez~\cite{BC2018},  the setting has been generalized in such a way that instead of two states (heads and tails or $\pm 1$), one considers $D\ge 2$ states, and with probability $p_n$ in the $n$th step the state changes according to a given irreducible Markov chain.\footnote{E.g. when $D=2$, one can still consider unequal probabilities for switching between the states in different directions.} (They also allow a small error term.) They assume that $\{p_n\}_{n\ge 1}$ is a {\it decreasing} sequence and $p:=\lim_n p_n$ is not necessarily zero. This excludes the $p=1$ case we consider, except the trivial $p_n\equiv 1$ case, and the most interesting case is $p=0$, the one we call cooling dynamics.

Bouguet and Cloez prove several interesting results, generalizing/strengthening those in~\cite{EV2018}. For example they show that if $\sum_n p_n=\infty,\ \lim_{n\to\infty} np_n=\infty$ and $\sum_n (p_n n^2)^{-1}<\infty$, then the empirical distribution of the states converges almost surely to the unique invariant probability distribution of the Markov chain.

The relationship with~\cite{EV2018} is explained in 4.2 in~\cite{BC2018}. 


The paper builds on the authors' previous results with M. Bena\"{\i}m in \cite{BBC2017}, and they point out in \cite{BC2018} that 
\begin{quote}
{\em ``In particular, the results we use provide
functional convergence of the rescaled interpolating processes to the auxiliary Markov processes...''
}
\end{quote}
at which point the authors refer to \cite{BBC2017} and another article.
Also, after their Theorem~2.8, treating the critical $p_n=c/n$ case, they note that
\begin{quote}
{\em ``It should be noted that our approach for the study of the long-time behavior of
... also provides functional convergence for some interpolated process ... from which Theorem 2.8 is a straightforward consequence.''
}
\end{quote}
On the one hand, their Theorem 2.8 is really about the convergence of $S_n/n$ only,  and the ``interpolating process'' alluded to is not the random walk $S$, and it is not completely clear  
if the authors of  \cite{BC2018} are trying to say that one in fact can obtain from~\cite{BBC2017} the functional convergence for $S$ in the critical case as stated in our Theorem~\ref{Lower.Main.Thm}(3). 

On the other hand, it seems that this derivation is after all doable, as we explain this briefly below. The reader may safely skip this part though and return to
it only after reading our main results. Indeed, let us suppose we already know tightness and only want to check the convergence of the finite dimensional distributions, that is the existence of the limit (in law)
\begin{align}\label{frenchlimit}
\lim_{n\to\infty} (S_{nt_{1}}/n,S_{nt_{2}}/n,...,S_{nt_{k}}/n),
\end{align}
for some $0< t_1<t_2<...<t_k$. When $p_n=c/n$ for large $n$'s, define $\tau_t:=\sum_1^{\lfloor t\rfloor} p_n$. Define the ``pasting process'' $\widehat X$ by
$$\widehat X(t):=\sum_{n=1}^\infty \frac{S_n}{n} 1_{\tau_n\le t<\tau_{n+1} },\ t\ge 0.$$
After some algebra, the limit in~\eqref{frenchlimit} is equivalent to the following one:
$$
\lim_{t\to\infty} \left(\widehat X(\tau_{tt_{1}})t_1,\widehat X(\tau_{tt_{2}})t_2,...,\widehat X(\tau_{tt_{k}})t_k\right).
$$
Using the fact that $\lim_{t\to\infty}(\tau_t-\log t)$  is a constant, we can rewrite this as 
$$
\lim_{t\to\infty} (\widehat X^{(\tau_{tt_{1}})}(0)\, t_1,\ \widehat X^{(\tau_{tt_{1}})}(\log(t_2/t_1))\, t_2,\ ...,\ \widehat X^{(\tau_{tt_{k}})}(\log(t_k/t_{k-1}))\, t_k),
$$ 
where $\widehat X^{(z)}(t):=\widehat X(t+z)$, $z>0$.
Now, if the limit (in law) $$\lim_{z\to\infty} (\widehat X^{(z)}(s_1),\ \widehat X^{(z)}(s_2),\ ...,\ \widehat X^{(z)}(s_k))$$ is known, we are done, and this latter type of limit  of ``pseudo-trajectories'' is what has been derived in~\cite{BBC2017} under some suitable assumptions.

In summary, \cite{BC2018} provides a very valuable complement to~\cite{EV2018}. Moreover,  with some further efforts, our result on the zigzag process in the present paper can apparently  be recovered from the results presented in the sequence \cite{BBC2017,BC2018}, and vice versa.

\section{Our main results}
\subsection{The law of the $n$th step for large $n$} Recall that  $$Y_n:= (-1)^{\sum_{i=1}^n W_i},$$ where $W_1,W_2,W_3,...$ are independent Bernoulli variables with parameters $p_1,p_2,p_3,...$, respectively. 

When $p_k\le 1/2$ for all large $k$, $\rho:=\prod_{i=2}^{\infty}(1-2p_i)$ is well defined as the terms are in $[0,1]$ with finitely many exceptions. In particular, when $\sum p_i<\infty$, by Borel-Cantelli, $Y_i=Y$ for all large $i$, a.s., and in Proposition~1  in~\cite{EV2018} it has been shown that in this case
\begin{align*}
\P(Y=1\mid Y_1= 1)&=\lim_n\P(Y_n=1\mid Y_1= 1)=\frac{1+\rho}{2},
\\
\P(Y=1\mid Y_1=- 1)&=\lim_n\P(Y_n=1\mid Y_1=- 1)=\frac{1-\rho}{2}.
\end{align*}
This may be  generalized is as follows.
\begin{theorem}\label{ergod}  
Define $N:=\mathrm{card}\{i: p_i>1/2\}\in \mathbb N\cup \{\infty\}$.
\begin{itemize}
 \item[(a)] If mixing holds, or if $\exists i: p_i=1/2$ then $\lim_n\P(Y_n=1\mid \F_1)=1/2$.
\item[(b)] If mixing does not hold, then there are two cases ($k\in\{-1,1\}$):
 \begin{itemize}
 \item[(i)]  if $N<\infty$, then $\ds\lim_{n\to\infty}\P(Y_n=1\mid Y_1=k)=  \frac12(1+k\rho)$,  and $\rho\neq 0.$
 \item[(ii)] if $N=\infty$ then $\P(Y_n=1\mid Y_1=k)$ has no limit.
 \end{itemize}
 \end{itemize}
\end{theorem}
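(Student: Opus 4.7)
The plan is to reduce everything to tracking the scalar quantity $e_{1,n}=\prod_{k=2}^{n}(1-2p_k)$. Since $Y_n\in\{-1,+1\}$, the identity~\eqref{onemore} immediately gives
\[
\P(Y_n=1\mid Y_1=k)=\frac{1+k\,e_{1,n}}{2},\qquad k\in\{-1,+1\}.
\]
Existence and value of the conditional probability limits in parts (a) and (b) are therefore entirely controlled by those of $\lim_n e_{1,n}$, and the proof becomes an analysis of this product.

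For part (a), I would first invoke Proposition~\ref{mixing.prop}: mixing yields $e_{1,n}\to 0$, hence $\P(Y_n=1\mid\mathcal F_1)\to 1/2$. The ``$\exists i\ge 2 : p_i=1/2$'' branch is even simpler, as a single vanishing factor forces $e_{1,n}=0$ exactly for every $n\ge i$, so the conditional probability equals $1/2$ from that index onward.

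For part (b), I would start from the observation that mixing failure combined with the negation of the clause ``$\exists i : p_i=1/2$'' puts us in the regime where $\sum_k\min(p_k,q_k)<\infty$ and every factor $1-2p_k$ is nonzero. Splitting magnitude from sign,
\[
|e_{1,n}|=\prod_{k=2}^{n}\bigl(1-2\min(p_k,q_k)\bigr),\qquad \operatorname{sign}(e_{1,n})=(-1)^{|\{2\le k\le n\,:\,p_k>1/2\}|},
\]
the standard criterion for infinite products of terms in $(0,1]$ gives $|e_{1,n}|\to \rho^{*}$ for some $\rho^{*}>0$. The case split is then immediate: if $N<\infty$, the sign stabilizes at $(-1)^N$ for large $n$, so $e_{1,n}\to(-1)^N\rho^{*}=:\rho\neq 0$, matching the definition $\rho=\prod_{i\ge 2}(1-2p_i)$ and yielding the claimed $\frac12(1+k\rho)$; if $N=\infty$, the sign flips infinitely often while the magnitude tends to $\rho^{*}>0$, so $e_{1,n}$ has the two distinct subsequential limits $\pm\rho^{*}$, and neither $e_{1,n}$ nor $\P(Y_n=1\mid Y_1=k)$ can converge.

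I do not foresee any genuine obstacle; the argument is essentially bookkeeping on top of Proposition~\ref{mixing.prop}. The one detail worth stating carefully is the case-split logic that moves us into the ``$p_k\neq 1/2$ for every $k$'' regime in part (b), so that $\operatorname{sign}(e_{1,n})$ is well defined and the magnitude/sign decomposition lines up cleanly with the dichotomy based on $N$.
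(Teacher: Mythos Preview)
Your proposal is correct and follows essentially the same route as the paper: both arguments reduce to tracking $e_{1,n}=\prod_{k=2}^{n}(1-2p_k)$ via the identity $\P(Y_n=1\mid Y_1=k)=\tfrac12(1+k\,e_{1,n})$, then split $e_{1,n}$ into magnitude $\prod_k(1-2\min(p_k,q_k))$ and sign $(-1)^{N_n}$ to read off the three cases. The only cosmetic differences are that the paper derives the key identity via the recursion $y_{n+1}=(1-2p_n)y_n$ rather than quoting~\eqref{onemore}, and organizes the case analysis by $N=\infty$ versus $N<\infty$ first (handling mixing within each), whereas you follow the theorem's own mixing/non-mixing split and invoke Proposition~\ref{mixing.prop} directly; your packaging is arguably a bit cleaner.
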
 

\begin{remark}[Ergodicity]\rm
Part (a) in Theorem \ref{ergod} is interpreted as ``mixing implies ergodicity", since $(1/2,1/2)$ is the invariant distribution for the switching matrix  
\[
   M=
  \left( {\begin{array}{cc}
   0 & 1 \\
   1 & 0 \\
  \end{array} } \right),
\] and we can consider our model as one where at step $n$ the  transition given by $M$ may or may not apply (with probabilities $p_n$ and $1-p_n$, resp.).$\hfill\diamond$

\end{remark}
\begin{remark}[Speed of convergence]\rm
Regarding Theorem \ref{ergod}, one may wonder what the speed of convergence is when the limit exists.
A closer look at the proof of Theorem~\ref{ergod} in Section~\ref{Pf} shows that the total variation distance to the limit decays as $$\exp \left(-2\sum_1^n \min\{p_i,1-p_i\}\right).$$ We leave the details to the reader.
$\hfill\diamond$
\end{remark}

\subsection{Scaling limits for the walk}
Recently, Sean O'Rourke has asked whether the results of~\cite{EV2018} could be extended to convergence in the process sense, in the spirit of the classical Donsker invariance principle (see e.g. \cite{KS1991} for the classical result and its proof). We are now going to answer this question, and moreover,  we are also going to consider additional cases, when turns are becoming more and more frequent (i.e.~$p_n$ is close to one), such as, for example, $p_n=1-c/n$ or $p_n=1-n^{-\g}$, $0<\g<1$ for large $n$.

{\bf Note:} In the rest of the paper, for convenience we assume again that $p_1=1/2$, i.e. we symmetrize the setting.
\subsubsection{The time-homogeneous case}
As a warm up, we start with the time-homogeneous case.

\begin{theorem}[Time-homogeneous case:]\label{warmup} Assume that $p_n=c$ for $n\ge n_0$. For $n\ge 1$, define the rescaled walk ${S}^n$  by $${S}^n(t):=\frac{S_{\left\lfloor \frac{c}{1-c}nt
\right\rfloor}}{\sqrt{n}},\ t\ge 0,
$$ 
and let $\mathcal{W}$ denote the Wiener measure. Then $\lim_{n\to\infty}\mathsf{Law}({S}^n)=\mathcal{W}$ on $C[0,\infty)$.
\end{theorem}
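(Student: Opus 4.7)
The plan is to invoke the classical functional central limit theorem machinery: prove convergence of the finite-dimensional distributions and establish tightness, then appeal to Prokhorov's theorem. First I would observe that modifying finitely many initial $p_n$'s only shifts $S_n$ by an $O(1)$ random amount, which becomes negligible after division by $\sqrt{n}$, so without loss of generality $p_n = c$ for all $n \ge 2$. Under this reduction, $\{Y_n\}_{n \ge 1}$ is a stationary two-state Markov chain on $\{-1,+1\}$ with switch probability $c$, and by \eqref{eq_eij} the correlations decay geometrically: $\E(Y_i Y_j) = (1-2c)^{|j-i|}$. A direct computation gives $\Var(S_N) \sim N (1-c)/c$, so the scaling factor $c/(1-c)$ in the definition of $S^n$ is calibrated precisely so that $\Var(S^n(t)) \to t$, which matches the covariance structure of standard Brownian motion.

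For the finite-dimensional distributions, I would fix $0 = t_0 < t_1 < \cdots < t_k$ and study the joint law of the increments $\Delta_i^n := S^n(t_i) - S^n(t_{i-1})$. Each increment is a normalized partial sum of a bounded, stationary, uniformly geometrically ergodic Markov chain, so Theorem A(i) (applied to a shifted starting point, which is asymptotically in the stationary regime) yields that $\Delta_i^n$ converges to $\mathsf{Normal}(0, t_i - t_{i-1})$. Joint independence in the limit follows from exponential $\alpha$-mixing of the chain: although the intervals in continuous time are separated only by $0$, in discrete-time steps they are separated by blocks of length $\Theta(n)$, across which the chain loses memory at rate $(1-2c)^{\Theta(n)}$. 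Either a direct characteristic-function computation, or a martingale approximation à la Gordin, rigorously converts exponential decay of correlations into asymptotic independence of the blocks.

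For tightness, the cleanest route is Billingsley's moment criterion, which reduces to proving
\[
\E\bigl[(S^n(t) - S^n(s))^{4}\bigr] \le K\,(t-s)^{2}
\]
uniformly in $n$. Expanding gives a fourfold sum of $\E(Y_{i}Y_{j}Y_{k}Y_{l})$; after ordering $i \le j \le k \le l$ and using the Markov property of $Y$, each such expectation factors and contributes at most $(1-2c)^{(j-i)+(l-k)}$, whose geometric summation yields a bound of order $(b-a)^{2}$ on $\E[(S_b-S_a)^{4}]$. Plugging in $b - a \sim n(t-s)c/(1-c)$ and dividing by $n^2$ gives the required estimate.

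The main obstacle is the fourth-moment calculation: it is conceptually routine for an exponentially mixing bounded sequence, but the book-keeping of the four-index sum requires care. If one prefers to bypass it, one can alternatively solve the Poisson equation $(I-P)g = \mathrm{id}$ for the two-state transition kernel $P$ (which is trivial here since the state space has size two), decompose $Y_n = (M_n - M_{n-1}) + (g(Y_{n-1}) - g(Y_n))$ with $M$ a bounded-increment martingale, and apply the martingale FCLT of Billingsley–McLeish to $M$; the telescoping remainder is bounded and hence negligible after $\sqrt{n}$ rescaling. This packages the fdd and tightness steps together and, given the simple structure here, is probably the slickest way to finish.
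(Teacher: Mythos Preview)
Your proposal is correct, and interestingly your closing suggestion---solving the Poisson equation and applying a martingale FCLT---is precisely what the paper does. In the paper's framework (Section~\ref{mgale.appr}), one computes $a_n=\sum_{i\ge 0}(1-2c)^i=1/(2c)$ for all $n$ (this is exactly the value $g(+1)=-g(-1)$ of the Poisson solution), forms the martingale $M_n=S_{n-1}+a_nY_n$, observes $v_n=\sum 4a_i^2p_iq_i=\frac{1-c}{c}\,n$, and then invokes Drogin's martingale invariance principle (Proposition~D1972). Since $a_n$ is constant and the increments $a_i\xi_i$ are uniformly bounded, the Lindeberg-type condition~\eqref{Drogin.first} is immediate, and the whole proof occupies three lines.

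Your primary route---fdd convergence via the CLT for exponentially mixing stationary sequences, plus tightness from the fourth-moment bound---is a genuinely different and self-contained argument. It has the virtue of being classical and of not requiring the Drogin result as a black box; your factorization $\E(Y_iY_jY_kY_l)=(1-2c)^{(j-i)+(l-k)}$ for $i\le j\le k\le l$ (which is in fact an equality, not just a bound, by independence of the $W_m$'s on disjoint blocks) makes the fourth-moment estimate clean. The paper also acknowledges (in a remark following the theorem) that such direct approaches exist, and that the result is covered by Davydov's theorems on uniformly strong mixing chains. What the paper's martingale-approximation route buys is uniformity: the same computation of $a_n$ and $v_n$, and the same appeal to Drogin, drive the proofs in the heating, cooling-subcritical, and intermediate regimes, so the time-homogeneous case becomes a one-line specialization rather than a stand-alone argument.
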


\begin{remark}
We will show that Theorem \ref{warmup} follows trivially from our general martingale approximation method of
Subsection \ref{mgale.appr}. However, we note that one can also  give a  direct proof using that the ``turning times" are geometrically distributed. Here is a sketch: assuming that e.g.  $Y_1=1$ we can consider the period consisting of the first run of $1$'s together with the first run of $-1$'s. The second, third etc. periods are defined similarly, and the piece-wise linear ``roof-like" processes in these periods are i.i.d. (up to their respective starting values). Since the length of each run is geometrically distributed, and those geometric variables are independent, the Renewal Theorem applies to the lengths of the periods. One then applies the classical invariance principle to the process considered at each second ``turning time", and finally extends the result for all times. We leave the details to the reader.$\hfill\diamond$
\end{remark}

\begin{remark}
Theorem \ref{warmup} is also covered by those in~ \cite{Da1970,Da1973}. The first one treats the ``uniformly strong mixing" condition for Markov chains and weak convergence.$\hfill\diamond$
\end{remark}

\subsubsection{Heating regime}
The following theorem will give an invariance principle for the ``heating" case, that is for the case when the $p_n$ are getting close to one. But before that we present an important remark.

\begin{remark}[Even and odd parts]
It turns out that in the heating regime, the right approach is to look at the sums of the two sub-series $I=\sum_{\textsl{odd}}:=\ds\sum_{k=1}^{\infty} q_{2k-1}$ and $II=\sum_{\textsl{even}}:=\ds\sum_{k=1}^{\infty} q_{2k}$ separately. If either $I<\infty$ or $II<\infty$, then the invariance principle breaks down. 

Indeed, by Borel-Cantelli then, after some finite time, every other step turns the coin a.s., and consequently, $S$ is  stuck on a set of size three, which rules out the validity of any invariance principle.  We conclude that for an invariance principle to hold, it is not enough to assume merely that $\ds\sum_{k=0}^{\infty} q_k=\infty$; one needs to assume that in fact $I=II=\infty$. $\hfill\diamond$
\end{remark}
In light of the previous remark, without the loss of generality from now on we will work under the following assumption.

\begin{assumption}\label{assA}
$I=II=\infty.$
\end{assumption} 

Before we state the following theorem, we need some more notations,
Introduce
\begin{align}\label{an_defined}
a_n&:=\sum_{i=0}^{\infty}\Cov(Y_{n},Y_{n+i})
=1+\sum_{i=1}^{\infty}(1-2p_{n+1})(1-2p_{n+2})\dots (1-2p_{n+i}),
\\ \nonumber
&=1+\sum_{i=1}^{\infty}(-1)^{i}(1-2q_{n+1})(1-2q_{n+2})\dots (1-2q_{n+i}),\ n\ge 1
\end{align}
which is well defined as the sum of a Leibniz series,
\begin{align}\label{vn_defined}
v_m=\sum_{i=1}^m 4 a_i^2 p_i q_i,\ m\ge 1,
\end{align}
and
\begin{align*}
\xi_i&:=(-1)^{W_i}-\E\left[(-1)^{W_i}\right]=(-1)^{W_i}+2p_i-1;\qquad
\Lambda_n^2:=\sum_{i=1}^n a_i^2 \xi_i^2,
\end{align*} 
so that $\E\xi_i^2=\Var((-1)^{W_i})=4p_i q_i$ and $\E\Lambda^2_n=v_n$.

\begin{theorem}[Invariance principle; heating regime]\label{invar} 
Assume that $q_n\to 0$. Besides Assumption~\ref{assA}, assume that there exists a $C>0$ such that at least one of the following two assumptions is satisfied:
\begin{equation}\label{evens.dominate}
q_{2m}\ge C\max_{\ell\ge m} q_{2\ell+1},\quad \forall m\ge m_0\ (\text{even terms ``dominate''});
\end{equation}
\begin{equation}\label{odds.dominate}
q_{2m+1}\ge C\max_{\ell\ge m+1}q_{2\ell},\quad \forall m\ge m_0\ (\text{odd terms ``dominate''}).
\end{equation}
\begin{itemize}
\item[{\sf(a)}]
Define the rescaled walk ${S}^n$  by setting
\begin{align}\label{def.rescaled.RW}
{S}^n(t):=\frac{S_{Z(nt)}}{\sqrt{n}},\ t\ge 0.
\end{align}
where $Z(x):=\inf\{n\in \mathbb{N}:\  v_n\ge x\}$.
Then 
\begin{align}\label{Wiener.limit}
\lim_{n\to\infty}\mathsf{Law}({S}^n)=\mathcal{W}\ \text{on}\ C[0,\infty),
\end{align}
where $\mathcal{W}$ is the Wiener measure.

\item[{\sf(b)}]
We have $\lim_{n\to\infty}\Lambda_n=\infty$ almost surely\footnote{Note that Drogin in~\cite{D1972} proves, in fact, {\em two} invariance principles. The second one uses the function $s^2$ (our $\Lambda^2$) for time-change.}, and $\ds \lim_{n\to\infty}\frac{\Lambda^2_n}{\E\Lambda^2_n}= 1\ \text{in probability}.$
\end{itemize}
\end{theorem}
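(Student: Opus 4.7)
The plan is to approximate $S_n$ by a martingale whose predictable quadratic variation is exactly $v_n$ and whose squared-increment sum is exactly $\Lambda_n^2$, then invoke a martingale functional CLT under the time change $Z$. Define $T_n:=S_{n-1}+a_n Y_n=S_n+Y_n(a_n-1)$. Splitting off the first term in the series defining $a_{n-1}$ yields the recursion $a_{n-1}-1=(1-2p_n)\,a_n$, and combined with $\E[Y_n\mid\F_{n-1}]=(1-2p_n)Y_{n-1}$ this gives the clean increment formula
\begin{align*}
T_n-T_{n-1}=a_n Y_{n-1}\xi_n,
\end{align*}
which has zero conditional mean, is bounded by $2|a_n|$ in absolute value, and has conditional variance $4a_n^2 p_n q_n$. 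Hence $T$ is an $(\F_n)$-martingale with predictable quadratic variation $\langle T\rangle_n=v_n$ and squared-increment sum $\Lambda_n^2$. The remainder $S_n-T_n=-Y_n(a_n-1)$ is uniformly bounded in $n$ and therefore negligible after dividing by $\sqrt n$.

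Next I would control $a_n$ via its Leibniz structure. Setting $c_{n,k}:=\prod_{i=1}^k(1-2q_{n+i})\in(0,1)$, pairwise grouping of the alternating series defining $a_n$ gives
\begin{align*}
a_n&=2\sum_{j\ge 0}c_{n,2j}\,q_{n+2j+1},\\
1-a_n&=2\sum_{j\ge 0}c_{n,2j+1}\,q_{n+2j+2},
\end{align*}
so $a_n\in(0,1)$ unconditionally. The parity-sensitive hypotheses \eqref{evens.dominate}/\eqref{odds.dominate} then yield one-sided lower bounds on $a_i^2 p_i q_i$ along the ``dominant'' parity class; combined with Assumption~\ref{assA}, this forces $v_n\to\infty$. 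Part~(a) now follows from a standard martingale functional CLT (e.g.\ Hall--Heyde Theorem~4.1, or Helland 1982): the uniform bound $|T_i-T_{i-1}|\le 2$ makes the Lindeberg condition trivial once $v_n\to\infty$; the time change normalises the predictable variance so that $v_{Z(nt)}/n\to t$; and the bounded remainder $S-T$ does not affect the scaling limit, yielding convergence of $S^n$ to Wiener measure on $C[0,\infty)$.

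For part~(b), $\Lambda_n^2=\sum_{i\le n}a_i^2\xi_i^2$ is a sum of independent nonnegative bounded summands (each at most $4$) with $\E\Lambda_n^2=v_n\to\infty$, so $\Lambda_n^2\to\infty$ almost surely by the standard criterion for sums of uniformly bounded independent nonnegative random variables. The estimate $\Var(\xi_i^2)\le\E\xi_i^4\le 16 p_i q_i$ gives $\Var(\Lambda_n^2)\le 16\sum a_i^4 p_i q_i\le 4v_n$ (using $a_i\le 1$), and Chebyshev then yields $\Lambda_n^2/v_n\to 1$ in probability. The main obstacle I anticipate is the middle step: converting the parity-sensitive dominance hypotheses into a quantitative divergence of $v_n$ uniformly across \eqref{evens.dominate} and \eqref{odds.dominate}; once this is in hand, the martingale machinery for part~(a) and the elementary second-moment argument for part~(b) follow routinely.
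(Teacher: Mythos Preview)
Your martingale approximation via $T_n$ is exactly the paper's $M_n$, the increment identity $T_n-T_{n-1}=a_nY_{n-1}\xi_n$ is correct, and your treatment of part~{\sf(b)} (Three-Series/Kolmogorov for $\Lambda_n^2\to\infty$; Chebyshev with $\Var(\Lambda_n^2)\le 4v_n$ for the ratio) matches the paper's. The only substantive issue is the ``middle step'' you flag: deducing $v_n\to\infty$ from the parity hypotheses.

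The paper handles this in two stages. First it proves a lemma giving $\liminf_n a_n>0$ under the \emph{extra} regularity assumption $\limsup_n q_{n+1}/q_n<\infty$; then it introduces a monotonicity-based comparison lemma (replacing $(q_n)$ by a ``regular'' sequence $\tilde q_n$ that sandwiches it along the two parity classes) to reduce \eqref{evens.dominate}/\eqref{odds.dominate} to the regular case. This is correct but somewhat indirect.

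Your own grouping formulas let you bypass both lemmas and close the gap in one line. Under \eqref{evens.dominate}, take $n=2k+1$ with $k\ge m_0-1$; then $n+2j+1$ is even and $n+2j+2$ is odd, so your two displays become
\[
a_{2k+1}=2\sum_{j\ge0}c_{2k+1,2j}\,q_{2(k+j+1)},\qquad
1-a_{2k+1}=2\sum_{j\ge0}c_{2k+1,2j+1}\,q_{2(k+j+1)+1}.
\]
Since $c_{2k+1,2j+1}\le c_{2k+1,2j}$ and \eqref{evens.dominate} gives $q_{2(k+j+1)+1}\le C^{-1}q_{2(k+j+1)}$, termwise comparison yields $1-a_{2k+1}\le C^{-1}a_{2k+1}$, i.e.\ $a_{2k+1}\ge C/(C+1)$ for all large odd indices. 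Combined with $p_i\to1$ and $I=\infty$ from Assumption~\ref{assA}, this gives $\sum_{i\text{ odd}}4a_i^2p_iq_i=\infty$, hence $v_n\to\infty$. Under \eqref{odds.dominate} the same argument with $n=2k$ gives $a_{2k}\ge C/(C+1)$ and one invokes $II=\infty$ instead. So your anticipated obstacle dissolves via a direct termwise bound that is in fact cleaner than the paper's two-lemma comparison route; the rest of your argument (bounded increments $\Rightarrow$ Lindeberg trivial, bounded remainder $S-T$, martingale FCLT under the $Z$ time-change) then goes through exactly as you wrote.
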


\begin{remark}[Equivalent condition] One can rewrite \eqref{evens.dominate} in a ``backward looking" way:
$$
q_{2m+1}\le \mathrm{const}\cdot \min\{q_{2\ell},  \ell\le m\},\ \forall m\ge 0,
$$
as both are equivalent to saying that $q_n\ge \mathrm{const}\cdot q_r$ for $r>n$ if $n$ is even and $r$ is odd. A similar statement holds for \eqref{odds.dominate}.$\hfill\diamond$
\end{remark}

%

\subsubsection{Cooling regime}
When $\displaystyle \lim_{n\to\infty} p_n=0$, one deals with a so-called ``cooling dynamics" as the turns become infrequent. In this case, the scaling limit is not necessarily  Brownian motion, as the following theorem shows. Loosely speaking, the order const$\cdot n^{-1}$ is the critical one in the sense that for sequences of larger order the invariance principle is in force, however at this 
order or below it the situation is dramatically different.

\begin{theorem}[Cooling  regime]\label{Lower.Main.Thm}
Let the process $S^n$ be defined by $S^n(t):=S_{nt}/n,\, t\ge 0$, where for non-integer values of $nt$ we assign $S_{nt}$ using the usual linear interpolation. Let $\mathcal{R}$ be the process (``random ray") defined by  $\mathcal{R}(t):=tR$, where $R$ is a random variable equal to $\pm1$ with equal probabilities. We have the following limits in the process sense:   
\begin{enumerate} 
\item \underline{Supercritical case:} $\ds\sum_{n=1}^\infty p_n<\infty$. Then 
$\lim_{n\to\infty}\|{S}^n(\cdot)-\mathcal{R}(\cdot)\|_{\infty}=0$ almost surely. 
\item \underline{Strongly critical case:} $p_n=o(1/n)$  but $\ds\sum_{n=1}^\infty p_n=\infty$. 
Then 
$\lim_{n\to\infty}{S}^n(\cdot)=\mathcal{R}(\cdot)$ in law.
\item \underline{Critical case:} $p_n=c/n$ for $n\ge n_0$. Recalling the notion of the zigzag process (defined in Section~\ref{Prep.zigzag}),  $\lim_{n\to\infty} S^{(n)}$ is the zigzag process, where the limit is meant in law.
\item \underline{Subcritical case:} (Cooling but larger order than $1/n$)
Let $p_1=1/2$. Assume that, as $n\to\infty$,
\begin{itemize}
\item[(a)]  $ A_n:=n p_n \uparrow\infty$;
\item[(b)] $p_n\downarrow 0$.
\end{itemize}
Then, for  the rescaled walk~\eqref{def.rescaled.RW} the invariance principle~\eqref{Wiener.limit} holds.
\end{enumerate}
\end{theorem}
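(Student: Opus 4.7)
The plan is to handle the four cases separately, since the limit objects (straight ray, straight ray, zigzag, Brownian motion) arise from very different mechanisms, and only the last two really require work.

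Part~(1) reduces to Borel--Cantelli. Because $\sum_n p_n<\infty$, almost surely only finitely many turns occur; let $T^*<\infty$ denote the (random) last turn time. For $n>T^*$ one has $Y_n\equiv R$ for a $\{\pm 1\}$-valued variable $R$, which is Rademacher by the symmetry of $Y_1$, and $S_n-nR=S_{T^*}-T^*R$ is a fixed finite random quantity. Dividing by $n$ yields $\sup_{t\in[0,T]}|S^n(t)-tR|\le 2T^*/n\to 0$ almost surely, which is the claimed uniform convergence.

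Part~(2) rests on showing that when $p_n=o(1/n)$, the last turn before time $nT$ occurs at location $o(n)$ in probability. Quantitatively, for each $\varepsilon>0$,
$$\P\bigl(\exists\, k\in(n\varepsilon,nT]:W_k=1\bigr)\le \sum_{k=\lceil n\varepsilon\rceil}^{\lfloor nT\rfloor} p_k \longrightarrow 0,$$
since $kp_k\to 0$ makes the sum an $o(1)$ times $\log(T/\varepsilon)$. On the complementary event $Y$ is constant on $(n\varepsilon,nT]$, equal to some symmetric $\sigma_n\in\{\pm 1\}$, and using the Lipschitz bound $|S_k-S_m|\le|k-m|$ one gets $\sup_{t\in[0,T]}|S^n(t)-t\sigma_n|\le 2\varepsilon$. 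Combined with automatic tightness ($S^n$ is globally $1$-Lipschitz), this gives $S^n\Rightarrow\mathcal R$ in law.

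Part~(4) should follow from the general martingale approximation of Subsection~\ref{mgale.appr} that already drives Theorems~\ref{warmup} and~\ref{invar}. The inputs to check are the asymptotics of $a_n$ and the growth of $v_n=\sum_{i\le n}4a_i^2 p_iq_i$, together with a Lindeberg condition for $\sum a_i\xi_i$. Under $p_n\downarrow 0$ with $np_n\uparrow\infty$, a direct estimate of the Leibniz/geometric-type series using $(1-2p_{n+j})\approx e^{-2p_{n+j}}$ yields $a_n\sim 1/(2p_n)$; hence $4a_i^2 p_iq_i\sim 1/p_i$, so $v_n\to\infty$ fast enough that the single-term ratio $\max_{i\le n}a_i^2/v_n$ vanishes, which is the Lindeberg input needed to run the functional CLT in the time change $Z(\cdot)$.

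Part~(3) is the main obstacle. The guiding heuristic is that under $p_n=c/n$ the turn times form, in the logarithmic time scale, an asymptotically Poisson point process of rate $c$: for $0\le a<b$, $\sum_{k\in(ne^a,\,ne^b]}p_k\to c(b-a)$. Granted the zigzag process constructed in Section~\ref{Prep.zigzag} from a rate-$c$ Poisson process on $\R$ via the appropriate piecewise-linear integration, I would establish $S^n\Rightarrow\text{zigzag}$ in two steps: (i) convergence of finite-dimensional distributions, using the Polya-urn embedding underlying Theorem~A(ii) to identify the one-dimensional marginals with the ${\sf Beta}(c,c)$ laws and the Poissonization of turn counts to pin down the joint law of values at several times; and (ii) tightness, which is automatic from $|S^n(t)-S^n(s)|\le|t-s|$. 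The delicate part, absent from Parts~(1)--(2), is that the sign $Y$ never stabilizes on a macroscopic interval, so one must track the signs and the heights at the (rescaled) turn times jointly, which is exactly where the Polya-urn/Poisson picture is needed.
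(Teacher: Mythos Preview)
Your treatment of Parts~(1) and~(4) matches the paper's route (Borel--Cantelli for the former; martingale approximation via $a_n$ and $v_n$ and Drogin's criterion for the latter). Two small remarks on Part~(4): in the cooling regime the series for $a_n$ has all positive terms, so ``Leibniz'' is a misnomer; and your key step $a_n^2=o(v_n)$ is asserted rather than proved. It does follow from your heuristics---using $A_{n/2}\le A_n$ one gets $p_{n/2}\le 2p_n$, whence $v_n\gtrsim \sum_{i>n/2}1/p_i\ge (n/2)/p_{n/2}\ge n/(4p_n)$, so $a_n^2/v_n\lesssim 1/(np_n)\to 0$---but the paper instead uses Stolz--Ces\`aro and a direct estimate of $a_n-a_{n-1}$, so your shortcut is worth spelling out.

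Your Part~(2) is genuinely different from the paper and cleaner. The paper argues via tightness plus a variance comparison: it shows $\liminf_n\Var(S_n/n)\ge 1$ by monotonicity in the $p_k$'s and comparison with the critical case $p_k=\varepsilon/k$, forcing every subsequential limit at $t=1$ to be $\tfrac12(\delta_{-1}+\delta_1)$. Your direct estimate $\P(\exists\,k\in(n\varepsilon,nT]:W_k=1)\le\sum p_k\to 0$ bypasses this entirely and gives the sup-norm closeness to a random ray at once; this is a nicer argument.

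Part~(3) is where you have a real gap. You correctly identify the two ingredients (turns $\Rightarrow$ PPP with intensity $c/x$, and tightness), but ``Poissonization of turn counts to pin down the joint law of values at several times'' is not a proof. Knowing the one-dimensional marginals are ${\sf Beta}(c,c)$ (via P\'olya urns or otherwise) does not by itself determine the finite-dimensional distributions of the zigzag limit, and the sign at each $t_i$ is a function of \emph{all} turns in $(0,t_i]$, infinitely many of them near $0$. The paper's device is to build, for each fixed $t>0$, an explicit continuous map $\Phi_t$ from locally finite point measures on $(0,T]$ (vague topology) to $C[0,T]$ that reconstructs the zigzag path given the sign at $t$; continuity of $\Phi_t$ plus $N^{(n)}\overset{vd}{\to}\mathrm{PPP}$ then yields $\Phi_t(N^{(n)})\Rightarrow\Phi_t(\mathrm{PPP})$, and conditioning on the (symmetric) sign at $t_1$ gives the fidi's. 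Without something playing the role of $\Phi_t$---a way to pass from convergence of the turn process to convergence of the path that handles the accumulation of turns at the origin---your outline does not close.
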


\subsubsection{Neither heating nor cooling regime}
The following result generalizes the case when $\lim\limits_{n\rightarrow\infty} p_n=a$ with $0<a<1$, as well as the time-homogeneous case of Theorem~\ref{warmup}: the invariance principle holds as long as the $p_n$ are bounded away from both $0$ and~$1$. 
\begin{theorem}[Invariance principle; neither heating nor cooling regime]\label{invar_nheatcool} 
Assume that
\begin{equation}\label{bounded.away}
0<\liminf_{n\rightarrow\infty} p_n\leq \limsup_{n\rightarrow\infty} p_n<1.
\end{equation}
Then for the rescaled walk~\eqref{def.rescaled.RW} the  invariance principle~\eqref{Wiener.limit} holds.
\end{theorem}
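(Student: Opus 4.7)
My plan is a Gordin-type martingale approximation followed by a martingale invariance principle; this mirrors the scheme behind Theorem~\ref{invar} but is considerably simpler under the uniform bounds~\eqref{bounded.away}. Choose $\delta\in(0,1/2)$ and $n_0$ so that $\delta\le p_n\le 1-\delta$ for $n\ge n_0$; then $|1-2p_n|\le 1-2\delta$, the series defining $a_n$ in~\eqref{an_defined} converges absolutely with $|a_n|\le 1/(2\delta)$, and $p_nq_n\ge\delta(1-\delta)$.

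Set $h_n(y):=a_n y$. The recursion $a_n=1+(1-2p_{n+1})a_{n+1}$ gives the Poisson-type identity $Y_n=h_n(Y_n)-\E[h_{n+1}(Y_{n+1})\mid \F_n]$; summing from $1$ to $N$ and telescoping yields
\[
S_N=M_N-a_{N+1}(1-2p_{N+1})Y_N,\qquad M_N:=\sum_{i=1}^N a_i Y_{i-1}\xi_i,
\]
where $(M_N,\F_N)$ is a martingale with increments $D_i:=a_i Y_{i-1}\xi_i$ satisfying $D_i^2=a_i^2\xi_i^2$ and $\E[D_i^2\mid\F_{i-1}]=4a_i^2 p_i q_i$, so $[M]_N=\Lambda_N^2$ and $\langle M\rangle_N=v_N$. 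The remainder is uniformly bounded by $1/(2\delta)$, hence $(S_N-M_N)/\sqrt n\to 0$ uniformly. A standard martingale functional CLT (Hall--Heyde) now applies to $(M_N)$: the Lindeberg condition is automatic from $|D_i|\le 1/\delta$, and a Chebyshev argument using $\Var(\xi_i^2)\le C$ together with the uniform bound on $|a_i|$ yields $\Lambda_N^2/v_N\to 1$ in probability (the same reasoning as in Theorem~\ref{invar}(b), but free of the oscillation subtleties). Time-changing by $Z$ makes $\langle M\rangle_{Z(nt)}\approx nt$, so $M_{Z(n\,\cdot\,)}/\sqrt n\Rightarrow W$ on $C[0,\infty)$, and combining with the approximation delivers~\eqref{Wiener.limit}.

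The one point deserving real care is verifying $v_n\to\infty$, equivalently $\sum_i a_i^2=\infty$, so that $Z$ is finite on all of $[0,\infty)$. The naive triangle bound gives $|a_n|\ge(4\delta-1)/(2\delta)$, positive only when $\delta>1/4$; for smaller $\delta$ I would exploit the recursion directly. If $|a_n|<1/2$ then $|(1-2p_{n+1})a_{n+1}|=|1-a_n|>1/2$, whence $|a_{n+1}|>1/[2(1-2\delta)]>1/2$, so consecutive values of $|a_i|$ cannot both be small. Therefore $\max(a_n^2,a_{n+1}^2)\ge 1/4$ along every window of length two, giving $\sum_i a_i^2=\infty$ and $v_n\ge \delta(1-\delta)n\to\infty$. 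This paired-term lower bound on $\{a_i^2\}$ is the main obstacle; once it is in place, every other step runs verbatim.
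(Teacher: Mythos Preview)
Your approach is essentially the paper's: the same martingale approximation $M_N=S_N+(a_N-1)Y_N$, the same uniform bound $|a_n|\le 1/(2\delta)$ from $|1-2p_n|\le 1-2\delta$, the same observation that bounded increments make the Lindeberg/Drogin condition trivial, and the same martingale FCLT to conclude. The one place where you and the paper genuinely diverge is the verification of $v_n\to\infty$.

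The paper argues by case distinction: if eventually $p_n\le 1/2$ then all $a_n\ge 1$; otherwise it uses the recursion at indices where $p_n>1/2$ to produce an infinite subsequence along which $a_i\ge 1$, which already forces $v_m\to\infty$. Your argument instead exploits the recursion $a_n-1=(1-2p_{n+1})a_{n+1}$ uniformly: $|a_n|<1/2$ forces $|a_n-1|>1/2$ and hence $|a_{n+1}|>1/(2(1-2\delta))>1/2$, so no two consecutive $|a_i|$ are small. This is cleaner (no case split) and buys you a \emph{linear} lower bound on $v_n$, which in turn makes the Chebyshev step $\Lambda_n^2/v_n\to 1$ immediate. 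One cosmetic slip: your stated bound $v_n\ge\delta(1-\delta)n$ is off by a harmless factor (only half the $a_i^2$ are guaranteed to exceed $1/4$), so you actually get $v_n\gtrsim \tfrac12\delta(1-\delta)n$; the conclusion is unaffected.
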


\subsection{Validity of the WLLN}
With regard to the Weak Law of Large Numbers (by which we mean that $S_n/n\to 0$ in probability), we know that it breaks down at the critical regime. On the other hand, the following result shows that above that order it is always in force.
\begin{theorem}[WLLN]\label{WLLN.by.comparison} 
Let $p_n\le 1/2$ for all $n\ge 1$ and assume that $\lim_{n\to\infty}np_n=\infty$. Then $\ds \lim_{n\to\infty}\frac{S_{n}}{n}=0$ in probability.
\end{theorem}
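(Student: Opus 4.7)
The plan is a straightforward second-moment argument. Since $p_1 = 1/2$ by the standing convention, $\E Y_i = 0$ for every $i$, and the identity $\Cov(Y_i,Y_j)=e_{i,j}$ recalled in the introduction gives
\begin{equation*}
\Var(S_n) \;=\; n \;+\; 2\sum_{1 \le i < j \le n} e_{i,j}.
\end{equation*}
By Chebyshev's inequality it suffices to show that $\Var(S_n)=o(n^2)$, in which case $S_n/n \to 0$ in $L^2$ and hence in probability.

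The crucial structural remark is that the hypothesis $p_k \le 1/2$ makes every factor $1-2p_k$ nonnegative, so $e_{i,j}\ge 0$ for all $i<j$. This allows me to upper-bound each inner sum by its infinite tail: setting $\tilde a_i := \sum_{j=i+1}^\infty e_{i,j}$, we have $\sum_{j=i+1}^n e_{i,j} \le \tilde a_i$. The second ingredient is that $np_n\to\infty$ means: for every $M>1/2$ there exists $n_0=n_0(M)$ with $p_k \ge M/k$ for all $k\ge n_0$. Using $1-x \le e^{-x}$ and the integral comparison $\sum_{k=i+1}^{j} 1/k \ge \log((j+1)/(i+1))$, I would obtain, for $i\ge n_0$ and $j>i$,
\begin{equation*}
e_{i,j} \;\le\; \exp\!\Bigl(-2\sum_{k=i+1}^{j} p_k\Bigr) \;\le\; \Bigl(\tfrac{i+1}{j+1}\Bigr)^{\!2M},
\end{equation*}
and then a second integral comparison in $j$ (valid since $2M>1$) yields $\tilde a_i \le (i+1)/(2M-1)$.

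Summing in $i$, the contribution from indices $i<n_0$ is bounded trivially by $n_0 \cdot n$ (using $e_{i,j}\le 1$), while
\begin{equation*}
\sum_{i=n_0}^{n-1} \tilde a_i \;\le\; \frac{1}{2M-1}\sum_{i=n_0}^{n-1}(i+1) \;\le\; \frac{(n+1)^2}{2(2M-1)}.
\end{equation*}
Therefore
\begin{equation*}
\limsup_{n\to\infty}\frac{\Var(S_n)}{n^2} \;\le\; \frac{1}{2M-1},
\end{equation*}
and letting $M\to\infty$ gives $\Var(S_n)=o(n^2)$, completing the proof. The argument is essentially mechanical; the only step requiring some care is the uniform-in-$i$ extraction of the power-law bound $((i+1)/(j+1))^{2M}$ from $np_n\to\infty$, but this follows cleanly from the monotonicity of $k\mapsto 1/k$. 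There is no substantial obstacle — the sharpness of the threshold $np_n\to\infty$ is reflected in the fact that we may take $M$ arbitrarily large precisely because this hypothesis is assumed, rather than a fixed finite lower bound.
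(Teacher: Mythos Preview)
Your proof is correct. Both arguments are second-moment proofs showing $\Var(S_n/n)\to 0$ via comparison with the critical sequence $p_k\approx M/k$, so the underlying idea is the same.

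The difference is one of packaging. The paper exploits the observation that each $e_{i,j}$ (and hence $\Var(S_n)$) is monotone decreasing in every $p_k\in[0,1/2]$, so it suffices to replace $(p_n)$ by the smaller sequence $\hat p_n=a/n$ and then invoke the limit $\Var(\hat S_n/n)\to 1/(2a+1)$, which was computed in the companion paper~\cite{EV2018}. You bypass that black box: rather than comparing variances, you directly bound $e_{i,j}\le((i+1)/(j+1))^{2M}$ from the lower bound $p_k\ge M/k$ and sum explicitly to get $\limsup \Var(S_n)/n^2\le 1/(2M-1)$. Your route is slightly longer but entirely self-contained, which is a genuine advantage here; the paper's version is shorter only because it outsources the key variance computation.
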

\subsection{Recurrence}
We now turn our attention to the recurrence/transience of the walk and its scaling limit.
\begin{definition}
We call $S$ recurrent if
\begin{equation}
\P(S_n=0\ \mathrm{i.o.}\mid Y_1)= 1.
\end{equation}
\end{definition}

Let us introduce the following mild condition on the walk.
\begin{assumption}[Spreading]\label{spr} Assume that for all $n, K\in \N$,
$$\lim_{m\to\infty} \P(|S_{m}|\le K\mid \mathcal{F}_n)=0,\ \mathrm{a.s.}$$
\end{assumption}
\begin{remark}
Assumption \ref{spr} is trivially satisfied when   $\sigma_n^2:=\sf{Var} (S_n)\to\infty$ and the scaling limit 
\begin{equation}\label{sc.lim}
\lim_{m\to\infty} \P\left( \frac{S_{n+m}}{\sigma_{n+m}}\in [a,b]\mid \mathcal{F}_n\right)=Q([a,b]),\ \mathrm{a.s.}
\end{equation}
holds with $a,b\in\R$, $a\le b$ and $n\in\N$, and some probability measure $Q$ such that $Q(\{0\})=0$. These scaling limits we did establish in many cases in \cite{EV2018}.

Let us now assume also mixing. Reformulate \eqref{sc.lim} as
$$
\lim_m \P\left( \frac{S_{n+m}-S_n}{\sigma_{n+m}}\in [a,b]\mid S_n, Y_n\right)=Q([a,b]),\ \mathrm{a.s.}
$$
It is easy to see that the conditioning on $Y_n$ could be safely dropped, as the ``initial'' $n$th step gets forgotten.$\hfill\diamond$
\end{remark}
\begin{theorem}\label{recmix} Besides Assumption~\ref{spr}, assume also mixing. Then $S$ is recurrent.
\end{theorem}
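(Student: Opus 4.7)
The plan is to reduce recurrence to a joint oscillation statement and apply a $0$-$1$ law to the tail $\sigma$-algebra of $(Y_n)$. Since $|Y_n|=1$, the walk $S$ has $\pm 1$-steps, so
$$
\{\limsup_n S_n=\infty\}\cap\{\liminf_n S_n=-\infty\}\ \subseteq\ \{S_n=0\text{ i.o.}\},
$$
because any excursion between large positive and large negative values must cross~$0$. Thus it suffices to prove $\P(\limsup_n S_n=\infty)=1$ and $\P(\liminf_n S_n=-\infty)=1$; the symmetry $(Y_n)\stackrel{d}{=}(-Y_n)$ makes these two equivalent, and the intersection of two probability-one events has probability one.

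Both $\{\limsup_n S_n=\infty\}$ and $\{\liminf_n S_n=-\infty\}$ belong to the tail $\sigma$-algebra $\mathcal{T}=\bigcap_N\sigma(Y_N,Y_{N+1},\dots)$ of $(Y_n)$, since modifying finitely many $Y_k$'s shifts $S_n$ by a bounded constant and does not alter whether the limsup (resp.\ liminf) is infinite. Granted a $0$-$1$ law for $\mathcal{T}$, assume toward contradiction $\P(\limsup_n S_n=\infty)=0$; by symmetry also $\P(\liminf_n S_n=-\infty)=0$, so almost surely $\limsup_n S_n$ and $\liminf_n S_n$ are both finite integers, hence $\sup_n|S_n|<\infty$ a.s. But Assumption~\ref{spr} excludes this: on $\{\sup_n|S_n|\le K\}$ one has $\mathbf{1}\{\sup_n|S_n|\le K\}\le\P(|S_m|\le K\mid\F_n)$ for every $m\ge n$, and the right-hand side tends to $0$ a.s.\ as $m\to\infty$, forcing $\P(\sup_n|S_n|\le K)=0$ for every $K$.

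The main obstacle is the $0$-$1$ law for $\mathcal{T}$ under mixing. Given $A\in\mathcal{T}$, the Markov property yields $\P(A\mid\F_n)=\P(A\mid Y_n)$, so it suffices to show $\P(A\mid Y_n=+1)=\P(A\mid Y_n=-1)$ for every $n$; L\'evy's upward theorem then gives $\mathbf{1}_A=\P(A)$ a.s. I would prove this by a coupling: construct two copies $(Y^{(+)}_k)_{k\ge n}$ and $(Y^{(-)}_k)_{k\ge n}$ of the chain started from $Y_n=+1$ and $Y_n=-1$ respectively, driven by independent flip sequences $(W_k)$ and $(\widetilde W_k)$ with $W_k,\widetilde W_k\sim\mathsf{Bernoulli}(p_k)$ up to the first meeting time $\tau:=\inf\{k>n:\,Y^{(+)}_k=Y^{(-)}_k\}$, and by the same flips thereafter. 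A short parity computation identifies $\tau$ as the first $k>n$ at which $Z_k:=W_k\oplus\widetilde W_k=1$; since $\P(Z_k=1)=2p_kq_k\ge\min(p_k,q_k)$ and the mixing hypothesis $\sum_k\min(p_k,q_k)=\infty$ (Proposition~\ref{mixing.prop}) forces $\sum_k 2p_kq_k=\infty$, the Borel--Cantelli lemma (applied to the independent events $\{Z_k=1\}$) yields $\tau<\infty$ a.s. After $\tau$ the two coupled chains agree forever; since tail events are invariant under finite modifications of $(Y_k)$, one concludes $\mathbf{1}_A(Y^{(+)})=\mathbf{1}_A(Y^{(-)})$ a.s., whence $\P(A\mid Y_n=+1)=\P(A\mid Y_n=-1)$ and the $0$-$1$ law follows.
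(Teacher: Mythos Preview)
Your proof is correct and takes a genuinely different route from the paper's.

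The paper proceeds constructively: it first establishes a lemma giving the lower bound $\P(S_\ell\le S_n\mid Y_m)\ge\tfrac12(1-|e_{m,n+1}|)$ via a symmetrization argument, then recursively builds a deterministic sequence $(\ell_k)$ along which $\P(S_{\ell_{k+1}}<0\mid\F_{\ell_k})\ge 1/6$ (mixing is used to make $|e_{\ell_k,\cdot}|$ small, and Assumption~\ref{spr} is used to push $S$ out of the bounded window $[0,S_{\ell_k}]$), and concludes by the extended Borel--Cantelli lemma that $S_n<0$ infinitely often; symmetry gives $S_n>0$ infinitely often.

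Your approach is more structural: you first establish a Kolmogorov-type $0$--$1$ law for the tail $\sigma$-algebra of the Markov chain $(Y_n)$ via a successful coupling (this is precisely where mixing enters, through $\sum_k 2p_kq_k\ge\sum_k\min(p_k,q_k)=\infty$), then observe that $\{\limsup_n S_n=\infty\}$ and $\{\liminf_n S_n=-\infty\}$ are tail events, and finally use the distributional symmetry together with Assumption~\ref{spr} to rule out the possibility that both have probability~$0$. The paper's argument is more hands-on and yields a concrete quantitative bound at each stage; yours separates the two hypotheses transparently --- mixing yields tail-triviality, spreading rules out boundedness --- and the tail $0$--$1$ law for $(Y_n)$ that you prove is a clean statement of independent interest.

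One small wrinkle in your write-up: the sentence ``on $\{\sup_n|S_n|\le K\}$ one has $\mathbf{1}\{\sup_n|S_n|\le K\}\le\P(|S_m|\le K\mid\F_n)$'' is not literally correct as a pointwise inequality (the event on the left is not $\F_n$-measurable). The clean version is simply
\[
\P\Bigl(\sup_n|S_n|\le K\Bigr)\ \le\ \P(|S_m|\le K)\ =\ \E\bigl[\P(|S_m|\le K\mid\F_1)\bigr]\ \longrightarrow\ 0
\]
by bounded convergence and Assumption~\ref{spr}. This is cosmetic and does not affect the validity of your argument.
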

In  the next statement, the part that concerns the walk is a particular case of Theorem \ref{recmix}, provided that one knows that Assumption \ref{spr} holds. (For example, this is the case when $p_n= c/n$ for $n\ge n_0$ with some $n_0$ and $c>0$.)
\begin{theorem}[Recurrence; lower critical case]\label{rec.thm}
Suppose that $p_n\le c/n$ for $n\ge n_0$ with some $n_0$ and $c>0$, and at the same time $\sum_n p_n=\infty$. Then $S$ is recurrent, and in the $p_n= c/n$, $n\ge n_0$ case, the scaling limit (zigzag process) is recurrent as well. 
\end{theorem}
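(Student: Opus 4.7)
Plan: The theorem makes two claims—recurrence of the walk $S$ under $p_n\le c/n$ with $\sum_n p_n=\infty$, and recurrence of the zigzag scaling limit in the case $p_n=c/n$—which I would address separately. For $S$, the plan is to invoke Theorem~\ref{recmix}, which reduces the problem to verifying mixing and Assumption~\ref{spr}. Mixing is immediate from Proposition~\ref{mixing.prop}: the assumption $p_n\le c/n$ forces $p_n<1/2$ eventually, so $\min(p_n,q_n)=p_n$ for all $n$ large, and $\sum_n p_n=\infty$ by hypothesis. The substantial step is Assumption~\ref{spr}, namely $\P(|S_m|\le K\mid \F_n)\to 0$ a.s.\ as $m\to\infty$, for every fixed $n,K$.

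To verify Assumption~\ref{spr}, the key observation is that, conditionally on $\F_n$, the tail $(Y_{n+k})_{k\ge 1}$ is itself a coin-turning walk driven by the shifted sequence $(p_{n+k})_{k\ge 1}$, started from $Y_n\in\{\pm 1\}$. The shifted sequence still obeys $p_{n+k}\le c/(n+k)$ with $\sum_k p_{n+k}=\infty$, so parts~(2)--(3) of Theorem~\ref{Lower.Main.Thm} apply and yield
\[
\frac{S_m-S_n}{m}\ \overset{d}{\longrightarrow}\ R^{(n)}\qquad (m\to\infty),
\]
where $R^{(n)}$ is $\pm 1$-valued in the strongly critical case and Beta-type in the exact critical case; in either case its law places no mass at $0$. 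For intermediate behaviors of $p_n$ not literally covered by (2) or (3), the same non-concentration at $0$ can be extracted from a direct second-moment argument based on the bound $e_{i,j}\ge \mathrm{const}\cdot (i/j)^{2c}$ combined with a Paley--Zygmund-type estimate. Consequently,
\[
\P(|S_m|\le K\mid \F_n)(\omega)\ \le\ \P\bigl(|S_m-S_n|\le K+|S_n(\omega)|\ \big|\ \F_n\bigr)(\omega)\ \longrightarrow\ 0
\]
for a.e.\ $\omega$, so Theorem~\ref{recmix} delivers the recurrence of $S$.

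For the zigzag process $Z$ in the case $p_n=c/n$, the plan is to exploit a Markov structure on the logarithmic time scale. From the construction in Section~\ref{Prep.zigzag}, $Z$ is a continuous, piecewise linear process whose one-dimensional marginals are Beta$(c,c)$-distributed on $[-1,1]$, in particular with strictly positive density in every neighborhood of $0$. Setting $\tilde Z(u):=Z(e^u)$, I would aim to show that $\tilde Z$ is a stationary, ergodic Markov process on $[-1,1]$ whose invariant law is the same Beta$(c,c)$ measure (compatible with the already-known one-dimensional marginals). Birkhoff's ergodic theorem would then give that $\{u:|\tilde Z(u)|<\eps\}$ has full asymptotic density for every $\eps>0$, and continuity of $Z$ together with the zigzag's sign-switching dynamics—which forces a crossing through $0$ between consecutive excursions into $\{Z>\eps\}$ and $\{Z<-\eps\}$—would push $Z$ through $0$ at arbitrarily large times, establishing recurrence. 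The main obstacle is exactly this step: extracting stationarity and ergodicity of $\tilde Z$ from the explicit pasting/interpolation description of the zigzag process, likely via a Markov-chain analysis at the reversal epochs combined with a coupling or Lyapunov argument for convergence to equilibrium. Part~I, by contrast, is essentially a bookkeeping consequence of the shifted-time version of Theorem~\ref{Lower.Main.Thm}.
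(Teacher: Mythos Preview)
Your route via Theorem~\ref{recmix} is exactly the one the paper \emph{declines} to take in the general case, and for good reason. Just before stating the theorem, the paper remarks that the walk part ``is a particular case of Theorem~\ref{recmix}, provided that one knows that Assumption~\ref{spr} holds,'' and offers only the exact case $p_n=c/n$ as an instance where spreading is known. Your proposal inherits precisely this obstacle: for a generic sequence with $p_n\le c/n$ and $\sum_n p_n=\infty$ (where $np_n$ may oscillate arbitrarily between~$0$ and~$c$), neither part~(2) nor part~(3) of Theorem~\ref{Lower.Main.Thm} applies, and your Paley--Zygmund fallback does not deliver Assumption~\ref{spr}. From $e_{i,j}\ge\mathrm{const}\cdot(i/j)^{2c}$ one indeed gets $\Var(S_m)\ge v m^2$ for some $v>0$, and Paley--Zygmund applied to $S_m^2$ (using $\E S_m^4\le m^4$) yields $\P(|S_m|>\sqrt{\theta v}\,m)\ge(1-\theta)^2v^2$. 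But this is only a \emph{fixed positive lower bound} on $\P(|S_m|>K)$ for large $m$; it does not force $\P(|S_m|\le K)\to 0$. A subsequential limit of $S_m/m$ with positive variance can perfectly well carry an atom at~$0$, so spreading is left unverified and the appeal to Theorem~\ref{recmix} is incomplete.

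The paper instead gives a short, self-contained argument that bypasses spreading entirely. Set $\tau_0:=0$, $\tau_n:=\inf\{m>2\tau_{n-1}:Y_m=-1\}$, and $A_n:=\{Y_i=-1\text{ for all }i\in[\tau_n,2\tau_n]\}\subseteq\{S_{2\tau_n}\le 0\}$. Conditionally on $\tau_n=k$, the event $A_n$ is simply ``no turn on $(k,2k]$,'' whose probability is $\prod_{j=k+1}^{2k}(1-p_j)\ge\prod_{j=k+1}^{2k}(1-c/j)\to 2^{-c}$; since $\tau_n\to\infty$, this gives $\P(A_n\mid\F_{2\tau_{n-1}})\ge 2^{-c-1}$ for all large $n$, and the extended Borel--Cantelli lemma yields $\P(S_n\le 0\ \text{i.o.})=1$. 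Symmetry gives $\P(S_n\ge 0\ \text{i.o.})=1$ and hence recurrence. The same construction ports directly to the zigzag process, using that $\P(\text{no PPP point in }[t,2t])=\exp\bigl(-\int_t^{2t}\tfrac{c}{x}\,\mathrm dx\bigr)=2^{-c}$---far lighter than the stationary/ergodic programme on the log time scale that you outline.
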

Finally, we would like to summarize our scaling results in the diagram on Figure~\ref{fig.regimes}.

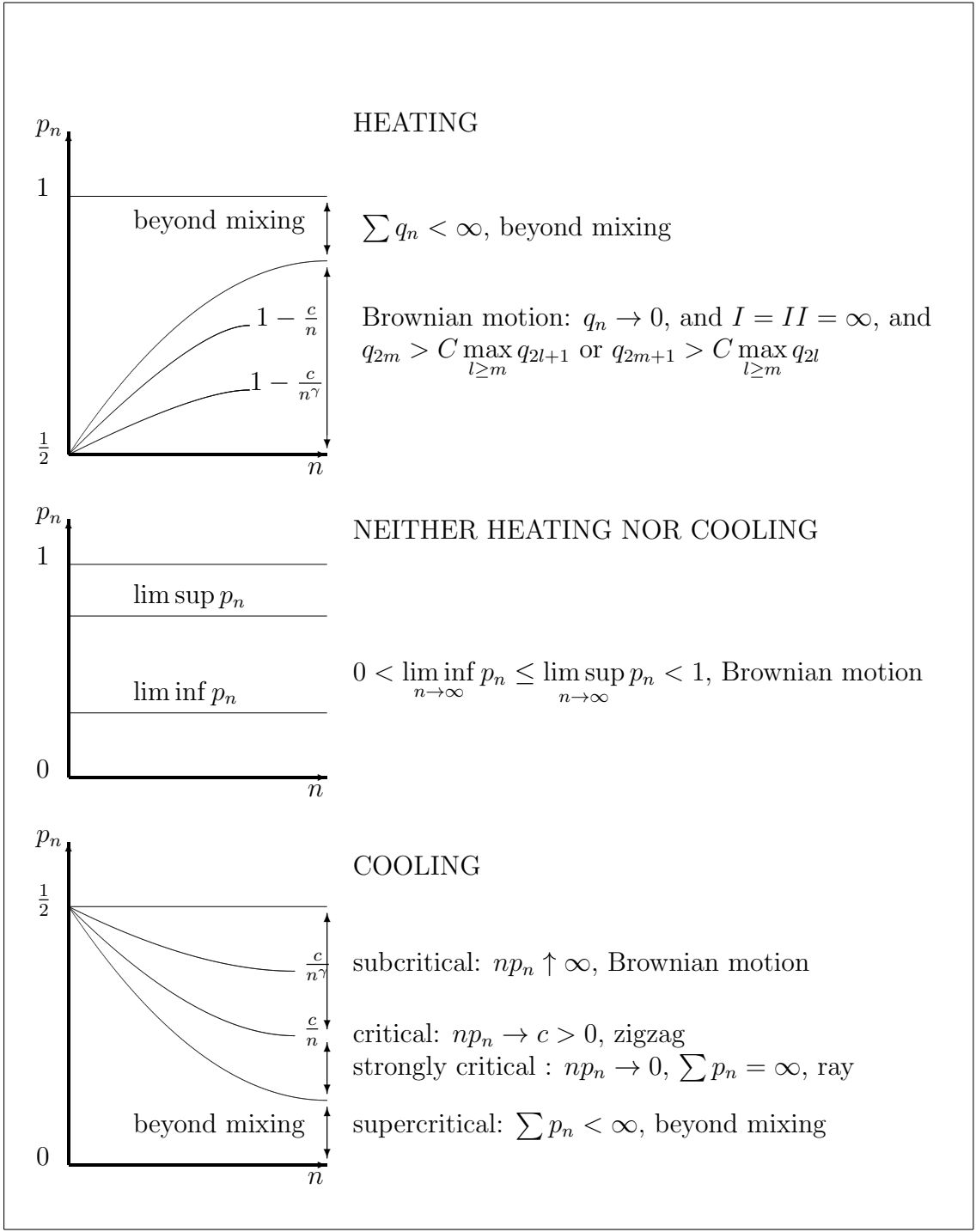
\begin{figure}
\centering
\setlength{\unitlength}{10mm}
\begin{picture}(15,19)
\put(0,0){\line(1,0){15}}
\put(0,0){\line(0,1){19}}
\put(15,19){\line(-1,0){15}}
\put(15,19){\line(0,-1){19}}
\linethickness{0.4mm}
\put(1,1){\vector(0,1){5}}
\put(1,1){\vector(1,0){4}}
\linethickness{0.1mm}
\put(1,5){\line(1,0){4}}
\qbezier(1,5)(3,2)(5,2)
\qbezier(1,5)(3,3)(4.5,3)
\qbezier(1,5)(3,4)(4.5,4)
\put(0.5,1){$0$}
\put(0.5,5){$\frac12$}
\put(0.5,6){$p_n$}
\put(4.5,3){\ $\frac cn$}
\put(4.5,4){\ $\frac c{n^\gamma}$}
\put(5.4,5.5){COOLING}
\put(5.4,4){subcritical: $np_n\uparrow\infty$, Brownian motion}
\put(5.4,2.9){critical: $np_n\to c>0$, zigzag}
\put(5.4,2.4){strongly critical : $np_n\to 0$, $\sum p_n=\infty$, ray}
\put(5.4,1.5){supercritical: $\sum p_n<\infty$, beyond mixing}
\put(5,1.1){\vector(0,1){.8}}
\put(5,1.9){\vector(0,-1){.8}}
\put(5,2.1){\vector(0,1){.8}}
\put(5,2.9){\vector(0,-1){.8}}
\put(5,3.1){\vector(0,1){1.8}}
\put(5,4.9){\vector(0,-1){1.8}}
\linethickness{0.4mm}
\put(1,7){\vector(0,1){4}}
\put(1,7){\vector(1,0){4}}
\linethickness{0.1mm}
\put(1,8  ){\line(1,0){4}}
\put(1,9.5){\line(1,0){4}}
\put(1,10.3){\line(1,0){4}}
\put(0.5,7){$0$}
\put(0.5,10.3){$1$}
\put(0.5,11){$p_n$}
\put(5.4,10.7){NEITHER HEATING NOR COOLING}
\put(5.4,8.5){$\displaystyle 0<\liminf_{n\to\infty} p_n\le \limsup_{n\to\infty} p_n<1$, Brownian motion}
\put(2,8.2){$\liminf p_n$}
\put(2,9.7){$\limsup p_n$}
\linethickness{0.4mm}
\put(1,12){\vector(0,1){5}}
\put(1,12){\vector(1,0){4}}
\linethickness{0.1mm}
\qbezier(1,12)(3,15)(5,15)
\qbezier(1,12)(3,14)(3.8,14)
\qbezier(1,12)(3,13)(3.8,13)
\put(1,16){\line(1,0){4}}
\put(0.5,12){$\frac 12$}
\put(0.5,16){$1$}
\put(0.5,17){$p_n$}
\put(3.9,14){$1-\frac cn$}
\put(3.8,13){$1-\frac c{n^\gamma}$}
\put(5.4,14){ Brownian motion: $q_n\to 0$, and  $I=II=\infty$,  and}
\put(5.4,13.5){ $\displaystyle q_{2m}>C\max_{l\ge m} q_{2l+1}$ or $\displaystyle q_{2m+1}>C\max_{l\ge m} q_{2l}$}
\put(5.4,15.4){ $\sum q_n<\infty$, beyond mixing}\put(5.4,17){HEATING}
\put(5,12.3){\vector(0,1){2.6}}
\put(5,14.7){\vector(0,-1){2.6}}
\put(5,15.1){\vector(0,1){.8}}
\put(5,15.9){\vector(0,-1){.8}}
\put(4.7,0.7){$n$}
\put(4.7,6.7){$n$}
\put(4.7,11.7){$n$}
\put(2,1.5){beyond mixing}
\put(2,15.5){beyond mixing}
\end{picture}
\caption{Three regimes of possible convergence.}
\label{fig.regimes}
\end{figure}


\section{Examples and open problems}
In this section, we compute the scaling $Z(\cdot)$ for a few examples in the cooling regime and the heating regime. We first give two concrete examples for the heating regime. Notice that the scaling function $Z(\cdot)$ is the generalized inverse of $v(m):=\sum_{n=1}^m 4 a_n^2 p_n q_n$. Hence, it suffices to compute $v(m)$ in order to obtain the scaling of~$S^{(n)}$.
\begin{example}
[Heating regime]  Set $p_n=1-\frac{c}{2\, n^\g}$, for $n\ge n_0$, where $0<\g<1$. By Proposition~\ref{equiv.for.b} in Section~\ref{mgale.appr},  $\Var(S_m)=(1+o(1))v(m)$, so we only need to compute $\Var(S_m)$, and then $Z(\cdot)$ is asymptotically equivalent to the ``inverse" of $\Var(S_m)$. First, note that 
$$
e_{ij}=\Cov(Y_i,Y_j)=\prod_{k=i+1}^j (1-2p_k), \qquad
|e_{ij}|= \prod_{k=i+1}^j \left(1-\frac{c}{k^\g}\right),
$$
and
$$
\Var(S_n)=n+2\sum_{i=1}^{n-1}\sum_{j=i+1}^n e_{ij}
=n+2\sum_{i=1}^{n-1}\sum_{j=i+1}^n (-1)^{i+j}
\prod_{k=i+1}^j |e_{ij}|.
$$
Thus
$$
\Var(S_n)-\Var(S_{n-1})=1+2\sum_{i=1}^{n-1} e_{in}
=1-2\sum_{i=1}^{n-1} (-1)^{n-1-i}|e_{in}|.
$$
Let us now show that
\begin{equation}\label{eq2.0}
\sum_{i=1}^{n-1} (-1)^{n-1-i}|e_{in}|
=
\sum_{i=1}^{n-1} (-1)^{n-1-i}\prod_{k=i}^n \left(1-\frac2{k^\g}\right)=\frac 12 -\frac{c+o(1)}{4n^\g}.
\end{equation}
In the case when $i\le n-n^{\frac{2\g+1}3}$ (note that $\g<\frac{2\g+1}3<1$), one has
$$
|e_{in}|\le \prod_{k=n-n^{\frac{2\g+1}3}}^n \left(1-\frac c{k^\g}\right)
\le \left(1-\frac{c}{n^\g}\right)^{n^{\frac{2\g+1}3}}
\le \exp\left(-cn^{\frac{1-\g}3}\right)
$$ 
yielding
\begin{equation}\label{eg2.1}
\sum_{i=1}^{n-n^{\frac{2\g+1}3}}|e_{in}|< n e^{-cn^{\frac{1-\g}3}} =o(n^{-\g}).
\end{equation}
For $i\ge n-n^{\frac{2\g+1}3}$, we have
\begin{equation}\label{eq2.2}
\begin{aligned}
&\sum_{i=n-n^{\frac{1+2\g}3}}^{n-1} (-1)^{n-1-i}|e_{in}|
=(|e_{n-1,n}|-|e_{n-2,n}|)+(|e_{n-3,n}|-|e_{n-4,n}|)+\dots
\\ &
=\frac c{(n-1)^\g} |e_{n-1,n}|
+\frac c{(n-3)^\g} |e_{n-3,n}|
+\frac c{(n-5)^\g} |e_{n-5,n}|
+\dots\\ &
=d_1+d_3+d_5+\dots
=\sum_{j=1,\mathrm{\ odd}}^{n^{\frac{(2\g+1)}{3}}} d_j,
\end{aligned}
\end{equation}
where
$$
d_j=\frac c{(n-j)^\g} \left(1-\frac{c}{(n-j+1)^\g}\right)
\left(1-\frac{c}{(n-j+2)^{\g}}\right)...
\left(1-\frac{c}{n^{\g}}\right),
$$
with $1\le j\le n^{\frac{2\g+1}3}$. Define also 
$$
b_j:=\kappa(1-\kappa)^j, \quad \text{ where }\kappa=\frac{c}{n^\g}.
$$
Note that
$$
d_j\le\frac{c}{(n-j)^\g} \left(1-\frac{c}{n^{\g}}\right)^j
=\left(1-\frac{j}{n}\right)^{-\g} b_j
=\left(1+O\left(n^{-\frac{2-2\g}3}\right) \right) b_j
$$
but
\begin{align*}
d_j&\ge\frac{c}{n^\g} \left(1-\frac{c}{(n-j+1)^{\g}}\right)^j
=\left(1- c\frac{\left(1-\frac{j-1}n\right)^{-\g}-1}{n^\g-c} \right)^j b_j\\ &
=\left(1-O\left(\frac{j}{n^{1+\g}}\right) \right)^j b_j
=\left(1-O\left(\frac{j^2}{n^{1+\g}}\right) \right) b_j
=\left(1-O\left(n^{-\frac{1-\g}3}\right) \right) b_j.
\end{align*}
Hence,
$$
|b_j-d_j|=b_j\times o(1),
$$
implying
\begin{equation}\label{eq2.3}
\sum_{j=1,\mathrm{\ odd}}^{n^{(2\g+1)/3}} d_j
=(1+o(1))\sum_{j=1,\mathrm{\ odd}}^{n^{(2\g+1)/3}} b_j.
\end{equation}
At the same time,
\begin{align*}
b_1+b_3+\dots&=\kappa(1-\kappa)[1+(1-\kappa)^2+(1-\kappa)^4+\dots]=\frac{\kappa(1-\kappa)}{1-(1-\kappa)^2}= \frac{1-\kappa}{2-\kappa}\\ &
=\frac12-\frac{\kappa}{2(2-\kappa)}=
\frac12 -\frac{c+o(1)}{4n^\g},
\end{align*}
so
\begin{equation}\label{eq2.4}
\sum_{j=1,\mathrm{\ odd}}^{n^{(2\g+1)/3}} b_j=
\sum_{j=1,\mathrm{\ odd}}^{\infty} b_j-O\left((1-\kappa)^{n^{\frac{2\g+1}3}}\right)
=\sum_{j=1,\mathrm{\ odd}}^{\infty} b_j-O\left(e^{-cn^{\frac{1-\g}3}}\right)
=\frac12 -\frac{c+o(1)}{4n^\g}.
\end{equation}
Then, combining \eqref{eg2.1}, \eqref{eq2.2}, \eqref{eq2.3} and \eqref{eq2.4} we obtain \eqref{eq2.0}. Hence
$$
\Var(S_n)-\Var(S_{n-1})=1-2\left[\frac 12 -\frac{c+o(1)}{4n^\g} \right]=\frac{c+o(1)}{2n^\g},
$$
and as a result, $\Var(S_n)=\frac{c}{2(1-\g)}n^{1-\g}+o(n^{1-\g})$.\\
Our conclusion is that  $Z(x)\sim\lfloor (2x(1-\g)/c)^{\frac{1}{1-\g}}\rfloor$, that is, for the rescaled walk \eqref{def.rescaled.RW}
the limit in \eqref{Wiener.limit} holds.
\end{example}
\begin{example}[Heating regime] Let $p_n=1-\frac cn$, $n\ge n_0$, for some $n_0\ge 1$. From Lemma \ref{simplemma} in Section~\ref{Pf}, $\lim_{n\to\infty}a_n= 1/2$, hence 
$$
v_m=\sum_{n=1}^m 4a_n^2 p_nq_n=(1+o(1))\sum_{n=1}^m \left(1-\frac{c}{n}\right)\frac{c}{n}=(c+o(1))\ln n.
$$
Thus, for the rescaled walk \eqref{def.rescaled.RW}, the limit \eqref{Wiener.limit} holds, but now with $Z(x)\sim\lfloor e^{x/c} \rfloor$.
\end{example}
Next is an example for the cooling regime.
\begin{example}[Subcritical case; cooling regime]
If  $p_n=\frac{c}{n^{\g}}$  for some $c>0$, $\g\in(0,1)$ and all $n\ge n_0$, then for the rescaled walk \eqref{def.rescaled.RW}
the  invariance principle \eqref{Wiener.limit} holds. Indeed, similarly to the previous examples, one only needs to know the order of  $\Var(S_m)$. By Theorem 2 of \cite{EV2018}, $\Var(S_n)=(1+o(1))\frac{n^{1+\g}}{c(1+\g)}$, so $Z(x)\sim\left\lfloor[c(1+\g)]^{\frac{1}{1+\g}}(x)^{\frac{1}{1+\g}}\right\rfloor$. 
\end{example}

\medskip
We finally present a few open problems.
\begin{problem}[When $p_n$ is not comparable to $1/n$; different PPP's]\rm
What can be said about the case when $\liminf_ n np_n=0$ and $\limsup_n np_n=\infty$? A somewhat related question is whether the following is possible for some situations: the scaling limit is a piecewise deterministic process and the turning points form a PPP but the intensity is different from const$/x\, \mathrm{d}x$.
 \end{problem}
\begin{problem}[Random temporal environment]\rm
One can also consider a {\it random walk in a random temporal environment} (as opposed to the more usual random spatial environment) as follows. Assume now that the $p_n$ are i.i.d. random and follow the same distribution (supported on $[a,b]$, for $0<a<b<1$) or a family of distributions on $[a,b]$. What can one say about the walk in the quenched or in the annealed case? 
 \end{problem}

\section{Proofs}\label{Pf}
The rest of the paper is organized as follows.
After presenting two preparations sections on martingale approximation and on a piecewise deterministic process, we give the proofs of the main results.

\subsection{Preparation I: The zigzag process}\label{Prep.zigzag}
We now define a stochastic process, which we will relate to the critical case in the cooling regime.
\begin{definition}[Zigzag process]\rm 
Consider a Poisson point process (PPP) on $[0,\infty)$ with intensity measure $\frac{a}{x}\, \mathrm{d}x$ with $a>0$. Once the realization is fixed, the value of the process  at $t\ge 0$ is obtained as follows. Starting with the segment containing $t$ and going backwards towards the origin,  color the first, third, fifth, etc.  segments between the points blue. The second, fourth, etc.\ will be colored red. Given this Poisson intensity, we will have infinitely many segments towards zero (and also towards infinity) almost surely.

\begin{figure}[h!]
\includegraphics[width=10cm]{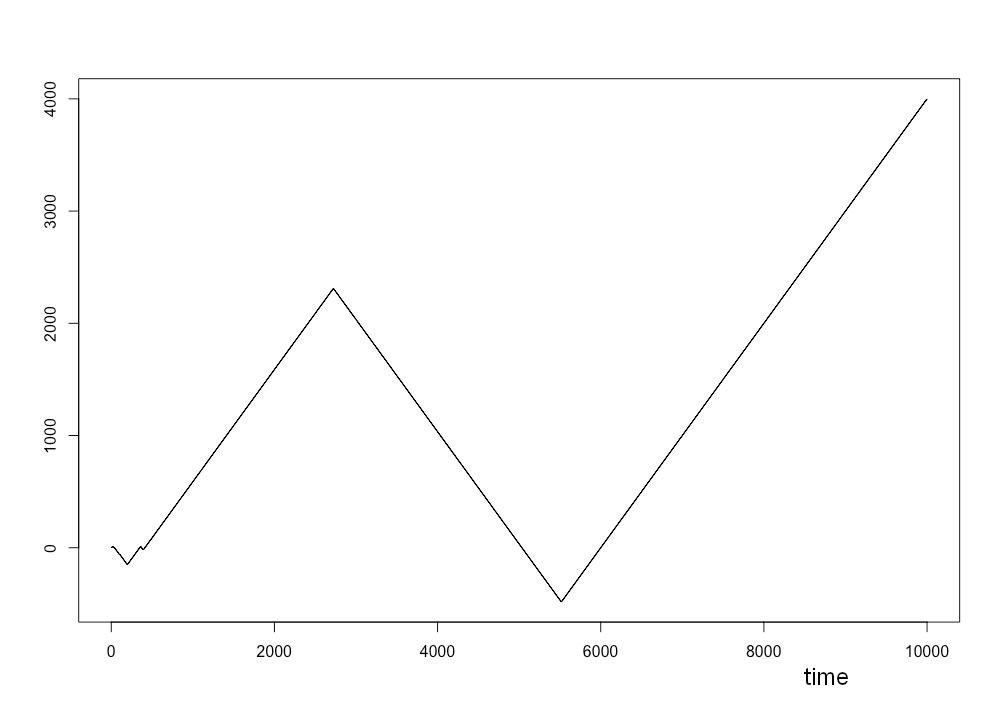}
\caption{The zigzag process: turning points form a PPP on $[0,\infty)$ with intensity measure $\frac{a}{x}\, \mathrm{d}x$. (Obtained by simulating $S$.)}\label{zigzagpic}
\end{figure}

Let $\lambda_b(t)$ and $\lambda_r(t)$ denote the Lebesgue measure of the union of blue, resp.\ red segments between $0$ and $t$. Then we define the \emph {zigzag process} $X$ by $$X_t:=W[\lambda_b(t)-\lambda_r(t)],$$ where $W$ is a random sign, that is $W=-1$ or $W=1$ with equal probabilities. See Figure~\ref{zigzagpic}.
\end{definition}
It is easy to check directly that the law of the process is invariant under scaling both axes by the same number.
\begin{remark}[One-dimensional marginals]\rm  It is more challenging to check directly that the one-dimensional marginals of the zigzag process are $\mathsf{Beta}(a,a)$, although this  follows immediately from Theorem \ref{Lower.Main.Thm} along with the scaling limit result for the one-dimensional distributions in \cite{EV2018}. Edward Crane has shown us a nice direct proof for this fact though. The interested reader may enjoy trying to find such a  proof him/herself.$\hfill\diamond$
\end{remark}
\subsection{Preparation II: Approximating the walk with a martingale}\label{mgale.appr}
We are interested in the scaling limit of the random walk $S$, and in particular, whether we have a Donsker-style invariance principle, leading eventually to Brownian motion. Following the general principle that ``it always helps to find a martingale",  in this section we investigate the following important, though still somewhat vague, question.
\begin{question}[M]
For a given sequence $\{p_n\}_{n\ge 1}$ is the walk $S$  ``sufficiently close" to some martingale $M$?
\end{question}
After Question (M),  the next question is of course:
\begin{question}[INV.M] Is there an invariance principle for $M$?
\end{question}
Focusing now on Question (M) only, we recall from \eqref{eq_eij} and~\eqref{onemore} the identity $e_{i,j}=\E (Y_j\mid Y_i)/Y_i,$ and that for $1\le i<j<k$, $e_{i,j}e_{j,k}=e_{i,k}.$ 
With the convention $e_{i,i}:=\E(Y^2_i)=1$, recall the definition of $a_n=\sum_{i=0}^{\infty}e_{n,n+i}$ from~\eqref{an_defined}, assuming that the series is convergent (if $p_n\ge 1/2$ for large $n$, then it always is; see below). Then 
$$
M_n:=Y_1+\dots Y_{n-1}+a_nY_n
$$
is a martingale. Indeed,
\begin{align*}
 \E(M_{n+1}-M_n\mid\mathcal{F}_n)&=\E((1-a_n)Y_n+a_{n+1}Y_{n+1}\mid\mathcal{F}_n)\\&
 =(1-a_n)Y_n+a_{n+1}\E(Y_{n+1}\mid Y_n)\\
&= [(1-a_n)+a_{n+1}e_{n,n+1}]Y_n,
\end{align*}
which is identically zero, since $a_{n+1}e_{n,n+1}=a_n-1$, as
$$
a_{n+1}e_{n,n+1}=\sum_{i=1}^{\infty}e_{n,n+1}e_{n+1,n+i}=\sum_{i=1}^{\infty}e_{n,n+i}=a_n-1.
$$
Observe also that
\begin{align}\label{VarMn}
\nonumber
\Var(M_{n+1}-M_n)&=\Var\left((1-a_n)Y_n+a_{n+1}Y_{n+1}\right)\\ & 
\nonumber
=(1-a_n)^2\Var(Y_n)+a_{n+1}^2\Var(Y_{n+1})+2(1-a_n)a_{n+1}\Cov(Y_n,Y_{n+1})
\\
& 
=a_{n+1}^2+(1-a_n)^2+2(1-a_n)a_{n+1} e_{n,n+1}
=a_{n+1}^2-(1-a_n)^2
\\ & \nonumber
=a_{n+1}^2\left[1-e_{n,n+1}^2\right]=4a_{n+1}^2 p_{n+1} q_{n+1}
\end{align}
since $\Var(Y_n)=\E(Y_n^2)=1$ for each $n$.

To understand what we mean by being {\it sufficiently close} to a martingale, recall that the rescaled walk ${S}^n$  is defined by 
$$
{S}^n(t):=\frac{S_{Z(nt)}}{\sqrt{n}}=\frac{M_{Z(nt)}+(1-a_{Z(nt)})Y_{Z(nt)}}{\sqrt{n}},\ t\ge 0,
$$ 
where
\begin{align*}
Z(n):=\inf\{m:\ v_m\ge n\},\ n\ge 1,
\end{align*}
Since $|Y_k|=1$, if  the $a_n$ are not too large, then it suffices to analyze the sequence of the rescaled martingales $M^n(t):=\frac{M_{Z(nt)}}{\sqrt{n}}$ instead of the sequence of the rescaled random walks.
Thus, we have the answer in the affirmative to Question (M), provided that  
\begin{enumerate}
\item[\sf (a)] $a_n$ is well-defined;
\item[\sf (b)] $a_{Z(n)}=o(\sqrt{n})$ (e.g. when $a_n$ remains bounded) as $n\to\infty$. (We dropped $t$ as it is just a constant.) 
\end{enumerate}

\begin{proposition}[Equivalent conditions for \sf{(b)}]\label{equiv.for.b}
Set 
$$
\sigma_n^2:= \Var(S_n).
$$ 
Since the martingale differences $M_i-M_{i-1}$ are uncorrelated and centered, one has 
$$
\Var(M_n)=\E\left[\left(\sum_{i=1}^n [M_i-M_{i-1}]\right)^2\right]=\sum_{i=1}^n \E[(M_i-M_{i-1})^2]=v_n,
$$  
where $v_n$ is defined by~\eqref{vn_defined} and $\Var(M_i-M_{i-1})$ is given by~\eqref{VarMn}. Then the conditions
\begin{itemize}
\item[({\sf b.1})] 
$v_n\to\infty,a_n=o\left(\sqrt{v_n}\right)$;
\item[({\sf b.2})]
$\sigma_n\to\infty, a_n=o(\sigma_n)$
\end{itemize}
are equivalent; and when they are satisfied,  $\sqrt{v_n}\sim \sigma_n$. 

Of course, ${\sf (b.1)}\Leftrightarrow {\sf (b.2)}\Rightarrow {\sf(b)}$. Moreover, if $v_n\to\infty$, then the condition $a_n=o\left(\sqrt{v_n}\right)$ is in fact equivalent to {\sf(b)}. The proofs of these statements are provided later.$\hfill\diamond$
\end{proposition}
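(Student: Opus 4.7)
The plan is to derive all four assertions from the single deterministic identity
\[
S_n - M_n = (1 - a_n)\,Y_n,
\]
which follows at once from $M_n = Y_1 + \cdots + Y_{n-1} + a_n Y_n$. In the symmetrized model ($p_1 = 1/2$) both $S_n$ and $M_n$ are centered and $\|Y_n\|_2 = 1$, so Minkowski's inequality in $L^2$ gives the fundamental bound
\[
\bigl|\,\sigma_n - \sqrt{v_n}\,\bigr| \;\le\; \|S_n - M_n\|_2 \;=\; |1 - a_n| \;\le\; 1 + |a_n|.
\]

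For the equivalence $(\mathsf{b.1}) \Leftrightarrow (\mathsf{b.2})$, I would divide the fundamental bound by $\sqrt{v_n}$: under $(\mathsf{b.1})$ the right-hand side is $o(1)$, so $\sigma_n/\sqrt{v_n} \to 1$, which forces $\sigma_n \to \infty$ and $\sqrt{v_n} \sim \sigma_n$, after which $a_n = o(\sqrt{v_n}) = o(\sigma_n)$ yields $(\mathsf{b.2})$. The reverse direction is symmetric (dividing the bound by $\sigma_n$ instead), and in both directions the asymptotic equivalence $\sqrt{v_n} \sim \sigma_n$ comes out automatically.

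Next I would show $(\mathsf{b.1}) \Rightarrow (\mathsf{b})$. The key observation is that small \emph{multiplicative} jumps of $v$ are preserved by the generalized inverse $Z$: from $v_m - v_{m-1} = 4 a_m^2 p_m q_m \le a_m^2 = o(v_m)$ one obtains $v_{m-1}/v_m \to 1$, and combining this with the sandwich $v_{Z(n)-1} < n \le v_{Z(n)}$ coming from the definition of $Z$ forces $v_{Z(n)} \sim n$. Therefore $a_{Z(n)}^2 = o(v_{Z(n)}) = o(n)$, which is $(\mathsf{b})$.

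For the converse $(\mathsf{b}) + v_n \to \infty \Rightarrow a_n = o(\sqrt{v_n})$, recall that $Z$ is evaluated at the real argument $nt$ in the rescaled walk, so it is natural to read $(\mathsf{b})$ as $a_{Z(x)} = o(\sqrt{x})$ as the real variable $x \to \infty$. In the generic case where $v$ is strictly increasing (i.e.\ $0 < p_m < 1$ for all sufficiently large $m$) one simply substitutes $x = v_m$: by definition $Z(v_m) = m$, and since $v_m \to \infty$, we conclude $a_m = a_{Z(v_m)} = o(\sqrt{v_m})$ at once. Degenerate indices with $p_m \in \{0,1\}$ produce plateaus of $v$, but there the step is deterministic, the martingale increment has zero variance, and the relation $a_{m-1} = 1 + a_m$ across a plateau lets one transfer the $o(\sqrt{v_m})$ estimate from the nearest strictly-increasing index. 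The main obstacle in the whole argument is precisely this last piece of bookkeeping around plateaus of $v$; once the fundamental bound is in hand, the rest is elementary manipulation of $o(\cdot)$'s.
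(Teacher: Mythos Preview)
Your approach is correct and genuinely cleaner than the paper's. Both start from the identity $S_n-M_n=(1-a_n)Y_n$, but the paper expands $\Var(S_n)$ directly, bounds $|\Cov(M_n,Y_n)|\le\sqrt{v_n}$ by Cauchy--Schwarz, obtains the relation $\sigma_n^2-v_n=(1-a_n)\bigl((1-a_n)+A_n\sqrt{v_n}\bigr)$ with $|A_n|\le 1$, and then---for the direction $(\mathsf{b.2})\Rightarrow(\mathsf{b.1})$---actually solves a quadratic in $w_n=\sqrt{v_n}/\sigma_n$ to conclude $w_n\to1$. Your use of the reverse triangle inequality in $L^2$, giving $|\sigma_n-\sqrt{v_n}|\le|1-a_n|$, bypasses the quadratic entirely and makes the two directions perfectly symmetric; dividing the single inequality by $\sqrt{v_n}$ or by $\sigma_n$ finishes each direction in one line. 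This is a real simplification.

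You also go further than the paper's own proof: the paper's Section~6.7 only establishes $(\mathsf{b.1})\Leftrightarrow(\mathsf{b.2})$ together with $\sqrt{v_n}\sim\sigma_n$, and is silent on $(\mathsf{b.1})\Rightarrow(\mathsf b)$ and the converse under $v_n\to\infty$. Your argument for $(\mathsf{b.1})\Rightarrow(\mathsf b)$ via $v_m-v_{m-1}\le a_m^2=o(v_m)$, hence $v_{Z(n)}\sim n$, is exactly right. For the converse, your substitution $x=v_m$, $Z(v_m)=m$ is the natural move. One small correction in your plateau bookkeeping: the recursion is $a_{m-1}-1=(1-2p_m)a_m$, so $a_{m-1}=1+a_m$ only when $p_m=0$; when $p_m=1$ one gets $a_{m-1}=1-a_m$ instead. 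Either way $|a_{m-1}|\le 1+|a_m|$, and in fact for any index $m$ one has $Z(v_m)\le m$ with $a_{Z(v_m)}$ controlling $a_m$ up to a bounded correction along the plateau, so the transfer still works; you may wish to state it in that form rather than with the specific relation $a_{m-1}=1+a_m$.
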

 
 To answer Question (INV.M), we refer the invariance principle of Drogin.
\begin{propD}[Part of Theorem~1 in~\cite{D1972}]
Let  $(X_i)_{i\ge 1}$ be a sequence of square integrable random variables adapted to the filtration $(\F_i)_{i\ge 1}$. Assume  that they are martingale differences: $\E(X_{i}\mid\F_{i-1})=0$,  and that $v_m:=\sum_{i=1}^m \E(X^2_{i}\mid\F_{i-1})\to\infty$ a.s. The processes $S$  and $S^n,\ n\ge 1$  by $S(v_m)=\sum_{i=1}^m X_i,\  S(0):=0,$ and by $S^n(t):=S(nt)/\sqrt n,\ t\ge 0$, using linear interpolation between integer times. Then the following are equivalent:
\begin{itemize}
\item[(i)] If $\epsilon>0$, then 
\begin{equation}\label{Drogin.first}\frac1n\sum_{i=1}^{Z(n)} X_i^2 1_{\{X_i^2>n\epsilon\}}\to_{L_1} 0\quad\text{as}\ n\to\infty.
\end{equation}
\item[(ii)]
As $n\to\infty$, the law of $S^n$ converges to the Wiener  measure and $$\frac{v_{Z(n)}}{n}\to_{L_1} 1.$$
\end{itemize}
\end{propD}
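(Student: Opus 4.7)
The plan is to establish this equivalence as a functional central limit theorem for martingale differences via a Skorokhod-embedding argument, with careful handling of the random time change $Z$.

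For the implication (i)$\Rightarrow$(ii), first I would invoke the martingale form of Skorokhod's embedding to construct, on a possibly enlarged probability space, a standard Brownian motion $B$ and a nondecreasing sequence of stopping times $0=\tau_0\le\tau_1\le\tau_2\le\cdots$ of $B$ such that $(B_{\tau_k})_{k\ge 0}$ has the same joint distribution as the partial sums $\bigl(\sum_{i=1}^k X_i\bigr)_{k\ge 0}$, and whose increments satisfy the key conditional identity $\E(\tau_k-\tau_{k-1}\mid \mathcal{G}_{k-1})=\E(X_k^2\mid \F_{k-1})$ together with the usual fourth-moment bound controlled by $\E(X_k^4\mid\F_{k-1})$, where $\mathcal{G}_{k-1}$ is the enlarged filtration. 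Under this coupling the embedded walk equals $B_{\tau_k}$ at the clock times $v_k$, and $S^n(t)$ is, modulo the linear-interpolation bridge, equal to $n^{-1/2}B_{\tau_{Z(nt)}}$.

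The second step is to show concentration of the random time: $\tau_{Z(n)}/n\to 1$ in probability, uniformly on compact intervals of $t$. From the definition of $Z$ one has $v_{Z(n)-1}<n\le v_{Z(n)}$, so $v_{Z(n)}=n+O\bigl(\E(X_{Z(n)}^2\mid\F_{Z(n)-1})\bigr)$, and the Lindeberg hypothesis~(i) forces the conditional variance of any single term in this range to be $o(n)$ in $L^1$ (decompose $X_k^2=X_k^2\mathbf{1}_{\{X_k^2\le n\epsilon\}}+X_k^2\mathbf{1}_{\{X_k^2>n\epsilon\}}$); hence $v_{Z(n)}/n\to 1$ in $L^1$. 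A Skorokhod-embedding variance bound of the form $\E[(\tau_{Z(n)}-v_{Z(n)})^2]\le C\,\E\bigl[\sum_{k\le Z(n)}X_k^4\bigr]$, combined with a truncation of $X_k$ at level $\sqrt{n\epsilon}$ justified by~(i), then promotes this to $\tau_{Z(n)}/n\to 1$ in probability. Because $n^{-1/2}B_{n\cdot}$ is a standard Brownian motion by Brownian scaling and $B$ is uniformly continuous on compacts, this gives $n^{-1/2}B_{\tau_{Z(n\cdot)}}\to W$ in $C[0,\infty)$. The additional error incurred by linear interpolation between the embedding times is bounded by $n^{-1/2}\max_{k\le Z(nT)}|X_k|$, which tends to zero in probability, again directly from~(i).

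For the reverse implication (ii)$\Rightarrow$(i), I would use that convergence of $S^n$ to the continuous Wiener process in $C[0,\infty)$ forces the modulus of continuity of $S^n$ to vanish in probability; in particular $n^{-1/2}\max_{k\le Z(nT)}|X_k|\to 0$. Together with $v_{Z(n)}/n\to 1$ this can be converted into the Lindeberg statement in~(i) by a direct truncation computation. The main obstacle throughout is the truly random nature of the conditional variances $\E(X_k^2\mid \F_{k-1})$: they need not be bounded or uniformly predictable, so both the Skorokhod moment estimates and the passage from $v_{Z(n)}$ to $\tau_{Z(n)}$ require a truncation of the $X_k$ chosen delicately to respect the martingale property. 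This interplay between truncation, the random time change $Z$, and the conditional Lindeberg condition is the technical heart of Drogin's original argument.
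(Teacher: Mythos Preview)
The paper does not prove this proposition at all: it is explicitly stated as ``Part of Theorem~1 in~\cite{D1972}'' and is simply quoted from Drogin's 1972 paper as a tool to be applied later. There is therefore no proof in the paper to compare your attempt against.

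Your sketch is a plausible outline of how one might prove a martingale functional CLT of this type via Skorokhod embedding, and indeed Drogin's original argument does use a Skorokhod-type representation. That said, a few of your steps are softer than they look. The fourth-moment bound $\E[(\tau_{Z(n)}-v_{Z(n)})^2]\le C\,\E\bigl[\sum_{k\le Z(n)} X_k^4\bigr]$ is not obviously available, since $Z(n)$ is a random index depending on the entire predictable variance process; a stopping-time argument or a maximal inequality for the running sum $\tau_m - v_m$ is needed before one can pass to the random upper limit. Also, for the converse direction, the passage from ``$n^{-1/2}\max_{k\le Z(nT)}|X_k|\to 0$ in probability'' to the $L^1$ Lindeberg condition~(i) requires a uniform-integrability argument that you have not supplied: smallness of the maximum in probability does not by itself control the $L^1$ norm of the truncated sum of squares. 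These are the genuine technical gaps in your outline; if you want a complete proof you should consult Drogin's original paper rather than the present one.
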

\medskip

Note that, in our setting, both $v_m$ and $Z(n)$ are deterministic. To summarize the discussion of Question(M) and (INV.M) above, in our setting, once the limit process is Brownian motion, we need to check the following conditions,
\begin{enumerate}
\item[\sf (a)] $a_n$ is well-defined;
\item[\sf (b)] $a_{Z(n)}=o(\sqrt{n})$, or equivalently, $a_{n}=o(\sigma_n)$ (given $v_n\rightarrow \infty$), as $n\to\infty$.
\item[\sf (c)] $v_n\rightarrow \infty$ and \eqref{Drogin.first} holds.
\end{enumerate}
Here (a), (b) guarantee (M) affirmative , and (c) guarantees (IN.V) affirmative.

\subsection{Some specific cases}
The first two cases we are looking at are in the cooling regime, the last one is in the heating regime.
We will use the conditions discussed in the last paragraph in Proposition~\ref{equiv.for.b}.
\subsubsection{Cooling, critical}
Let $p_n=c/n$ for large $n$. If $c\ge 1/2$, then (a) fails to hold, because then $a_n=\infty$. Otherwise $a_n$ is of order  $n^{1-2c}$, and $\sqrt{v_n}$ is of the same order, and thus {\sf(b)} fails to hold. In both cases, the answer to Question (M) is negative.
\subsubsection{Cooling, subcritical} Let $p_n\le 1/2$ for all\footnote{We may assume this without the loss of generality, as the validity of the  invariance principle does not depend on a finite number of terms.} $n\ge 1$ and $p_n=c/n^{\g}$ for $n$ large, where $0<\g<1$. In this case the answers to (M) and to (INV.M) are both in the affirmative, and one can compute that $a_n=\frac{n^{\g}}{2c}(1+o(1))$.
\subsubsection{Cooling, subcritical; the necessity of $\liminf\limits_{n\rightarrow\infty} \frac{p_n}{p_{n+1}}>0$} 
One can see that assumption (a) in Theorem~\ref{Lower.Main.Thm}(4) guarantees that 
\begin{equation*}
\liminf_{n\to\infty} \frac{p_n}{p_{n+1}}>0.
\end{equation*} 
The following example shows indeed  the necessity of this bound, that is, by showing that the property that $a_n=o(v_n)$ can break down if this $liminf$ is zero. Indeed, let
$$
p_i:=\frac{\ln k}{2\cdot k!}\text{\ \ for } k!< i\le (k+1)!,\quad k=1,2,\dots.
$$
Then
$$
\prod_{i=k!+1}^{(k+1)!}(1-2p_i)=\left(1-\frac{\ln k}{k!}\right)^{k\cdot k!}=(1+o(1))e^{-k\ln k}=\frac{1+o(1)}{k^k}
$$
and $\sum_k \frac{(k+1)!-k!}{k^k}<\infty$, so $a_n$ is well-defined. Moreover,
\begin{align*}
a_{m!}&=\sum_{i=0}^{\infty}e_{m!,m!+i}
\ge 1+\sum_{i=1}^{(m+1)!-m!}(1-2p_{m!+1})\dots(1-2p_{m!+i})
\\ &=
1+\left[1-\frac{\ln m}{m!}\right]+\left[1-\frac{\ln m}{m!}\right]^2+\dots+\left[1-\frac{\ln m}{m!}\right]^{(m+1)!-m!}\\ & =
\frac {1-O\left(e^{-m\ln m}\right)}{1-\left(1-\frac{\ln m}{m!}\right)}=(1+o(1))\, \frac{m!}{\ln m}.
\end{align*}
At the same time,
\begin{align*}
\frac{v_{m!}}{4}&=\sum_{i=1}^{m!} a_i^2 p_i q_i =  \sum_{k=0}^{m-1}\sum_{i=k!+1}^{(k+1)!}  a_i^2 p_i q_i
\le 
\sum_{k=0}^{m-1}\left[\sum_{i=k!+1}^{(k+1)!}  a_i^2\, \frac{\ln k}{k!}
\right]
\\
&\le \sum_{k=0}^{m-1} (1+o(1))\frac{k!}{\ln k}\le \frac{(1+o(1))(m-1)!}{\ln (m-1)}\le \frac{1+o(1)}m\cdot \frac{m!}{\ln m}
=o\left(a_{m!}^2\right),
\end{align*}
since for $k!<i\le (k+1)!$,
\begin{align*}
a_{i}&\le
\sum_{j=0}^{(k+1)!-k!} \left[1-\frac{\ln k}{k!}\right]^j
+ 
\left[1-\frac{\ln k}{k!}\right]^{(k+1)!-k!}\cdot \sum_{j=0}^{(k+2)!-(k+1)!} \left[1-\frac{\ln (k+1)}{(k+1)!}\right]^j
\\
&+
\left[1-\frac{\ln k}{k!}\right]^{(k+1)!-k!}\cdot 
\left[1-\frac{\ln (k+1)}{(k+1)!}\right]^{(k+1)!-k!}\cdot 
\sum_{j=0}^{(k+3)!-(k+2)!} \left[1-\frac{\ln (k+2)}{(k+2)!}\right]^j+\dots
\\
&\le (1+o(1))\frac{k!}{\ln k}+e^{-k\ln k}(k+2)!
+e^{-k\ln k}e^{-(k+1)\ln (k+1)}(k+3)!+\dots
\\
&\le(1+o(1))\frac{k!}{\ln k}+\frac{(k+2)!}{k^k}
+\frac{(k+3)!}{(k+1)^{m+1}}+\frac{(k+4)!}{(k+2)^{k+2}}+...
=(1+o(1))\frac{k!}{\ln k}.
\end{align*}

At the same time it is worth noting that with these $p_i$'s, the assumption (4)(a) in Theorem \ref{Lower.Main.Thm} is violated too, since for $i=(m+1)!$,
one has  
\begin{align*}
ip_i= (m+1)!\, p_{(m+1)!}=
(m+1)!\frac{\ln m}{2\cdot m!}=\left[\frac 12+\frac m2\right] \ln m,
\end{align*}
while 
$$
(i+1)p_{i+1}=\left[(m+1)!+1\right]\frac{\ln(m+1)}{2\cdot(m+1)!}
=\left[\frac12+o(1)\right] \ln m\ll i p_i.
$$
\subsubsection{\bf(Heating)}
Let  $p_n=1-q_n$ and $q_n\to 0$ but $\sum q_n=\infty$. We have
$$
a_n=1+\sum_{i=1}^{\infty}(-1)^{i}(1-2q_{n+1})(1-2q_{n+2})\dots (1-2q_{n+i}),
$$
and, since $1-2p_n=2q_n-1<0$ for large $n$, using the Leibniz criterion, along with the assumption that $\sum q_n=\infty$, it follows that $a_n$ is well defined. The validity of the martingale approximation follows from the fact that $a_n\le 1$ but $v_n\to\infty$; see the proof of Theorem~\ref{invar}.\\

\subsection{Proof of Theorem~\ref{ergod}}
Clearly, if $p_i=1/2$ for some $i\in \N$ then the process ``gets symmetrized" from time $i$ on (and $\rho=0$), and the statement is trivial. We will thus assume in the rest of the proof that $p_i\neq 1/2,\ \forall i\in \N$.

Furthermore, we will handle the conditional probability $\P(\cdot \mid Y_1=1)$ only, the argument for $\P(\cdot \mid Y_1=-1)$ is similar. In terms of the $W_i$, one has  $Y_n:= (-1)^{\sum_{i=1}^n W_i},$ where $W_1,W_2,W_3,...$ are independent Bernoulli variables with parameters $p_1,p_2,p_3,...$, respectively and we will handle the $p_1=0$ (i.e. $W_1\equiv 0$) case. In particular,
$\prod_{i=1}^n (1-2p_i)=\prod_{i=2}^n (1-2p_i).$ 

Let $x_n:=\P(Y_n=1)$. We have the recursion 
\begin{eqnarray*}
x_{n+1}&=&p_n(1-x_n)+(1-p_n)x_n,\ n\ge 1;\\
x_1&=&1,
\end{eqnarray*}
and the substitution $y_n:=x_n-1/2$ yields $y_{n+1}=(1-2p_n)y_n$  with $y_1=1/2$. Hence,
\begin{equation}\label{second.rec}
y_{n+1}=\frac12\prod_{i=1}^n (1-2p_i).
\end{equation}

\noindent\underline{Case 1:   $N=\infty$.} We have to prove that $x_n$ converges to 1/2 or has no limit, according to whether  $\min(p_i,q_i)$ is summable or not.

Let  $N_n:=\mathrm{card}\{i\le n:\ p_i>1/2\};$ then $\lim_{n\to\infty} N_n=\infty.$
Since $$\min(p_i,q_i)=\begin{cases}
p_i.&\text{if }p_i<1/2;\\
q_i=(1-p_i),&\text{if }p_i>1/2,
\end{cases}
$$
we have
\begin{align*}
&\prod_{i=1}^n (1-2p_i)=\prod_{i\le n,p_i\le 1/2}(1-2p_i)\times \prod_{i\le n,p_i>1/2} (1-2p_i)
\\
&=(-1)^{N_{n}}\prod_{i\le n,p_i\le 1/2}(1-2p_i)
\times
\prod_{i\le n,p_i>1/2} (1-2q_i)=(-1)^{N_{n}}
\prod_{i=1}^n (1-2\min(p_i,q_i)).
\end{align*}
Given that $\lim_n (-1)^{N_n}$ does not exist, there are two cases: 

(i)  the right-hand side converges because the product (without the $(-1)^{N_n}$ factor) converges to zero and  mixing holds  ($\sum_{i=1}^\infty\min(p_i,q_i)=\infty)$, in which case $\lim_n y_n=0$ and $\lim_n x_n=1/2$.

(ii) the right-hand side has no limit  and mixing does not hold 
in which case $y_n$ (hence $x_n$) has no limit.

\medskip

\noindent\underline{Case 2: $N<\infty$}. Let us assume first that $N=0$, that is, $p_i<1/2,i\ge 1$. If $\sum_{i=1}^\infty p_i=\infty$ then in \eqref{second.rec} we have $\prod_{i=1}^n (1-2p_i)\downarrow 0$, implying $\lim_n y_n=0$ and $\lim_n x_n=1/2$.  If $\sum_{i=1}^\infty p_i<\infty$, then $\rho=\prod_{i=1}^{\infty} (1-2p_i)>0$ and $\lim_n y_n=\frac12 \rho$, that is, $\lim_n x_n=\frac12(1+\rho).$

In the general case, for large $i$,  $\min(p_i,q_i)=p_i<1/2$, and mixing
is  tantamount to  $\sum_{i=1}^\infty p_i=\infty$. The proof is very similar as before, using the fact that the product has positive terms for large enough indices.

\subsection{Proof of Theorem \ref{warmup}}

The martingale method is applicable in this case too.  Indeed,  direct computation gives $a_n=\frac{1}{2p}$, $\forall n$ and $ v_n=\overset{n}{\sum}4a_i^2p_iq_i=\frac{1-p}{p}n.$ Hence $a_n =o(\sqrt{v_n})$, $a^2_i\xi^2_i$ are bounded, so~\eqref{Drogin.first} holds, and thus we can apply Proposition~D1972, yielding the answer to (INV.M)  in the affirmative.$\qed$

\subsection{Proof of Theorem~\ref{invar}}
First we will prove the statement under the more restrictive assumption that 
\begin{align}\label{qn_behave}
\limsup_{n\to\infty} \frac{q_{n+1}}{q_n}<\infty, 
\end{align}
and then we upgrade it for showing the statement under the condition appearing in the theorem.

\subsubsection{STEP 1} 
We start with a simple lemma.
\begin{lemma}\label{simplemma}
Assume that for the non-negative sequence $(q_n)$, such that
\begin{itemize}
\item $q_n\to 0$,
\item  $\sum_n q_n=\infty$,
\item the condition \eqref{qn_behave} holds.
\end{itemize}
Then $\liminf_{n\to\infty} a_n>0$, where the $a_n$ are defined by~\eqref{an_defined}.
Moreover, if  $\ds\lim_{n\to\infty} \frac{q_{n+1}}{q_n} =1,$ then $\ds \lim_{n\to\infty} a_n=1/2$.
\end{lemma}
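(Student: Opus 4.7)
\emph{Proof proposal.} The plan is to represent both $a_n$ and $1-a_n$ as positive series supported on the same index set, and then to read off both conclusions from a termwise comparison. Setting $b_i := \prod_{k=n+1}^{n+i}(1-2q_k)$, for $n$ large every factor lies in $(0,1)$ and the series defining $a_n$ is alternating with $b_i \searrow 0$ (the limit $b_i \to 0$ uses $\sum q_k = \infty$). Pairing consecutive terms in the two available ways and invoking $b_{2i+1}=b_{2i}(1-2q_{n+2i+1})$, I obtain the twin identities
\begin{align*}
a_n &= \sum_{i=0}^\infty (b_{2i}-b_{2i+1}) = 2\sum_{i=0}^\infty q_{n+2i+1}\,b_{2i},\\
1-a_n &= \sum_{i=0}^\infty (b_{2i+1}-b_{2i+2}) = 2\sum_{i=0}^\infty q_{n+2i+2}(1-2q_{n+2i+1})\,b_{2i},
\end{align*}
in which both series carry the same weights $b_{2i}$, differing only in the coefficient multiplying each $b_{2i}$.

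For the first assertion, let $C := \limsup_n q_{n+1}/q_n \in [0,\infty)$. Given $\epsilon>0$, for all $n$ large enough we have $q_{m+1}/q_m \le C+\epsilon$ for every $m \ge n$, so the termwise bound $q_{n+2i+2}(1-2q_{n+2i+1}) \le (C+\epsilon)\, q_{n+2i+1}$ holds for each $i \ge 0$ simultaneously; summing yields $1-a_n \le (C+\epsilon)\, a_n$, equivalently $a_n \ge 1/(1+C+\epsilon)$. Letting $\epsilon \downarrow 0$ gives $\liminf_n a_n \ge 1/(1+C) > 0$.

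For the second assertion, assuming $q_{n+1}/q_n \to 1$, given $\epsilon>0$ I pick $n$ so large that both $q_{m+1}/q_m\in[1-\epsilon,1+\epsilon]$ and $q_m < \epsilon$ for every $m > n$ (using $q_n \to 0$ for the latter). The twin identities then produce the sandwich
$$(1-\epsilon)(1-2\epsilon)\,a_n \;\le\; 1-a_n \;\le\; (1+\epsilon)\,a_n,$$
trapping $a_n$ between $1/(2+\epsilon)$ and $1/(1+(1-\epsilon)(1-2\epsilon))$; both bounds converge to $1/2$ as $\epsilon \downarrow 0$, proving $a_n \to 1/2$. The only genuinely creative step is the twin representation, which encodes the symmetry between $a_n$ and $1-a_n$ and reduces the problem to a one-line ratio comparison; everything else is bookkeeping, and the main thing to double-check is that the termwise ratio estimates hold uniformly in $i$ for a fixed large $n$, which is immediate from the observation that the hypothesis on $q_{m+1}/q_m$ applies to \emph{every} $m \ge n$, not just for $m$ near $n$.
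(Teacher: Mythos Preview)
Your proof is correct and follows essentially the same approach as the paper. The paper writes $a_n=\sum_k 2q_{n+2k+1}\,w_{n+2k}$ (your first identity) and compares it termwise to the telescoping sum $\sum_k(w_{n+2k}-w_{n+2k+2})=1$; your ``twin identity'' for $1-a_n$ is precisely the complementary half of that same telescope, so the two arguments are identical up to bookkeeping, and both yield the explicit bound $\liminf_n a_n\ge 1/(1+\limsup_n q_{n+1}/q_n)$.
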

\begin{remark}
The condition that  $q_n\rightarrow 0$ is really necessary in Lemma~\ref{simplemma}. Indeed, fix $c_1,c_2>0$, $c_1\neq c_2$ and let $q_n=c_1/n$ if $n$ is odd and $q_n=c_2/n$ if $n$ is even. Then $q_{n+1}/q_n\not\to 1$, though~\eqref{qn_behave} still holds. In this case $a_n\not\to 1/2$, rather (as it is not hard to show) $\displaystyle \lim_{k\to\infty} a_{2k}=\frac{c_1}{c_1+c_2}\ne \frac{c_2}{c_1+c_2}=\lim_{k\to\infty} a_{2k+1}$. $\hfill\diamond$
\end{remark}

\begin{proof}
Fix some $n$, and for $m\ge n$ let
$$
w_m=\prod_{j=n+1}^m (1-2q_{j}), \ m>n, \quad
 w_n=1,
$$ 
and note that $w_m\downarrow 0$ as $m\to\infty$ due to $\sum q_i=\infty$. Then
$$
a_n=\sum_{i=0}^{\infty} (-1)^i w_{n+i}
=\sum_{k=0}^\infty \left(w_{n+2k}-w_{n+2k+1}\right)
=\sum_{k=0}^\infty 2q_{n+2k+1}\,w_{n+2k}.
$$
Now take any finite $c>\limsup_n q_{n+1}/q_n$, and assume that $n$ is so large that the quantity $q_{\ell+1}/q_\ell<c$ for all $\ell\ge n$. Then
\begin{align}\label{eqwn}
w_{n+2k}-w_{n+2k+2}&=  2( q_{n+2k+1} + q_{n+2k+2} -2 q_{n+2k+1} q_{n+2k+2} )\cdot w_{n+2k}\\
&\le 2  ( q_{n+2k+1} + q_{n+2k+2} )\cdot w_{n+2k}
\le  (1+c)\cdot 2q_{n+2k+1} w_{n+2k}.\nonumber
\end{align}
As a result,
$$
(1+c) a_n =\sum_{k=0}^\infty (1+c)\cdot 2q_{n+2k+1}\,w_{n+2k}  \ge 
\sum_{k=0}^\infty \left(w_{n+2k}-w_{n+2k+2}\right)=    w_n=1 >0,
$$
where the telescopic sum converges due to the fact that $w_m\to 0$.
Since $c>\limsup_n q_{n+1}/q_n$ is arbitrary, we can even conclude that
\begin{align}\label{anliminf}
\liminf_{n\to\infty} a_n\ge \frac{1}{1+\limsup_{n\to\infty} q_{n+1}/q_n}.
\end{align}

To prove the second part of the claim, observe that from~\eqref{anliminf} we already have $\liminf_n a_n\ge 1/2$. To show the counterpart of this, fix an arbitrary $\eps>0$ and let  $n$ be so large that 
$ \frac{q_{n+2k+2}}{q_{n+2k+1}}\ge 1-\eps/2$ and 
$q_{n+2k+2}\le \eps/4$ for all $k\ge 0$. Then
\begin{align*}
&q_{n+2k+1} + q_{n+2k+2} -2 q_{n+2k+1} q_{n+2k+2} 
=  \left(
1+\frac{q_{n+2k+2}}{q_{n+2k+1}}-2q_{n+2k+2}\right)
q_{n+2k+1} 
\\
&\ge (2-\eps)\cdot q_{n+2k+1},
\end{align*}
given that $\ds\frac{q_{n+2k+2}}{q_{n+2k+1}}\to 1$ and $q_{n+2k+2}\to 0$. Hence
(see~\eqref{eqwn})
$$
w_{n+2k}-w_{n+2k+2}\ge  (2-\eps)\cdot  2 q_{n+2k+1}w_{n+2k}
$$
and 
$$
(2-\eps) a_n=\sum_{k=0}^\infty  (2-\eps) \cdot 2q_{n+2k+1}\,w_{n+2k} \le 
\sum_{k=0}^\infty \left(w_{n+2k}-w_{n+2k+2}\right)=    w_n=1.
$$
Since $\eps>0$ is arbitrary, we conclude that $\limsup_n a_n\le 1/2$, which completes the proof.
\end{proof}

\medskip
We now continue the proof of the theorem under the assumption that \eqref{qn_behave} holds.

\medskip
Proof of Theorem \ref{invar}~{\sf(a)}: \\
Noting that all conditions at the end of Section \ref{mgale.appr} to Question(M),(INV.M) are in the affirmative (as $a_n$ is well-defined and stays bounded), except \eqref{Drogin.first}. Since in our case $X_i=a_i\xi_i Y_{i-1}$ and $|Y_i|=1$, what we need is to show that
\begin{equation}\label{condition.a}
\lim_{n\to\infty} \frac 1n \sum_{i=1}^{Z(n)} a_i^2 \xi_i^2 \mathbf{1}_{\{a_i^2 \xi_i^2 >n\epsilon\}}=0.
\end{equation}
(Note that $Z(n)$ in our case it is deterministic, and so is $v_m$.) Since
$$
\xi_i^2=[(-1)^{W_{i}}+(2p_i-1)]^2\le 4, \text{ and }  |a_i|\le 1,
$$
as $a_i$ is a Leibniz series, all but finitely many terms in the sum in~\eqref{condition.a} are zero, proving~\eqref{condition.a}. We conclude that \eqref{Drogin.first} holds.

Next, a direct computation shows that $v_m=4\sum_{i=1}^m a_i^2 p_i q_i.$ Then 
$$
\lim_{m\to\infty} v_m=\sum_{i=1}^{\infty} 4 a_i^2 p_i q_i=\infty
$$ 
follows from Lemma \ref{simplemma} and from the assumptions $p_n\to 1$ and $\sum q_n=\infty$. The proof of {\sf (a)} is thus complete.

\medskip
Proof  of  Theorem \ref{invar} {\sf (b)}: First, we prove that $\Lambda_n^2=\sum_{i=1}^n a_i^2 \xi_i^2\to \infty$. Recall that  $\xi_i=(-1)^{W_i}-E(-1)^{W_i}=2p_i-1+(-1)^{W_i}$ satisfies $\E\xi_i^2=\Var((-1)^{W_i})=4p_i q_i$. Let also $U_i:=a_i^2 \xi_i^2\in [0,4]$.

Since the $W_i$ are independent, so are the $\xi_i$, and hence, for $\Lambda_n^2$, the Three Series Theorem applies:  the non-negative series $\sum_i U_i$ diverges if for some  $A>0$, $\sum_i \E[U_i;|U_i|\le A]$ diverges. But for $A>4$, 
$$
\sum_i \E[U_i;\ |U_i|\le A]=\sum_i \E(U_i)=\sum_i a_i^2 p_i q_i=\infty,
$$ 
as $a_i$ is bounded away from zero, $p_i\to 1$ and $\sum q_i=\infty.$ 

Alternatively, let $\epsilon>0$. Then $p_i\to 1$ and $\sum q_i=\infty$ along with the second Borel-Cantelli lemma guarantee that $\xi_i=2p_i-1+(-1)^{W_i}\ge 2-\epsilon$ for infinitely many $i$'s almost surely. We are done because the $a_i$ are bounded away from zero.

For the second statement,  by using  Chebyshev's inequality, it is enough to show that
\begin{align}\label{eq_Cheb}
\lim_{n\to\infty}\frac{\Var(\Lambda_n^2)}{(\E\Lambda_n^2)^2}=0.
\end{align}
Since $a_n,p_n,q_n\in[0,1]$,
\begin{align}\label{eq_Cheb1}
\Var(\Lambda^2_n)=4\sum_{i=1}^n a_i^4p_i q_i(p_i-q_i)^2\le  4\sum_{i=1}^n q_i.
\end{align}
Moreover, for large $n$'s, 
\begin{align}\label{eq_Cheb2}
\E\Lambda_n^2=v_n=4\sum_{i=1}^n a_i^2\, p_i \, q_i
\ge c \sum_{i=1}^n q_i
\end{align}
for some $c>0$, since $\ds \liminf_{i\to\infty} a_i>0$ by Lemma~\ref{simplemma} and $p_i\to 1$. Given that $\sum_{i=1}^n  q_i\to\infty$, \eqref{eq_Cheb1} and~\eqref{eq_Cheb2} together yield \eqref{eq_Cheb}, thus completing the proof of the statement.  $\hfill\qed$

\subsubsection{STEP 2}\label{even.odd}
We now upgrade the result obtained in STEP 1, by dropping the restriction that \eqref{qn_behave} holds. We need the following

\begin{lemma}[Comparison with ``regular" sequences]\label{cool.lemma}
Let $0\le q_n\le 1/2,\ n\ge 1$.

(i) Assume  that there exists a sequence $q^*_k\to 0$ such that $q^*_n$ is not summable, {\em regular}, in the sense that \eqref{qn_behave} holds,  and $q_n\le q^*_n$ for even $n$, while $q_n\ge q^*_n$ for odd~$n$. Then  $\ds\liminf_{k\to\infty} a_{2k}>0$.

(ii) Assume that there exists a sequence $\tilde q_k\to0$  such that $\tilde q_n$ is not summable,  {\em regular} in the sense that \eqref{qn_behave} holds, and $q_n\le \tilde q_n$ for odd $n$, while $q_n\ge \tilde q_n$ for even~$n$. Then  $\ds\liminf_{k\to\infty} a_{2k+1}>0$.
\end{lemma}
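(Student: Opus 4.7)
The plan is to run the argument from the proof of Lemma~\ref{simplemma} essentially verbatim, restricted to starting points $n$ of a fixed parity. The only substantive input needed by that proof is a uniform upper bound $q_{n+2k+2}/q_{n+2k+1}\le c$ as $k$ varies; once this is available, the inequality
\[
w_{n+2k}-w_{n+2k+2}=2\bigl(q_{n+2k+1}+q_{n+2k+2}-2q_{n+2k+1}q_{n+2k+2}\bigr)\,w_{n+2k}\le (1+c)\cdot 2\,q_{n+2k+1}\,w_{n+2k}
\]
together with the telescoping identity $\sum_{k\ge 0}(w_{n+2k}-w_{n+2k+2})=w_n=1$ (which requires only $w_m\to 0$) immediately yields $a_n\ge 1/(1+c)>0$. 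So the entire content of the lemma reduces to producing such a uniform bound $c$ in each of the two cases.

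For part~(i), I would specialize to $n=2k$ and compare the true sequence $(q_n)$ to the auxiliary sequence $(q^*_n)$ term by term inside the ratio one needs to bound. The numerator $q_{n+2k'+2}=q_{2k+2k'+2}$ has even index and satisfies $q_{n+2k'+2}\le q^*_{n+2k'+2}$ by hypothesis, while the denominator $q_{n+2k'+1}=q_{2k+2k'+1}$ has odd index and satisfies $q_{n+2k'+1}\ge q^*_{n+2k'+1}$. Hence
\[
\frac{q_{n+2k'+2}}{q_{n+2k'+1}}\le \frac{q^*_{n+2k'+2}}{q^*_{n+2k'+1}},
\]
and the right-hand side is bounded uniformly because $q^*$ is regular, i.e.,~$\limsup_j q^*_{j+1}/q^*_j<\infty$. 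Plugging this into the telescoping argument above gives $a_{2k}\ge 1/(1+c)$ for all sufficiently large $k$, hence $\liminf_k a_{2k}>0$. Part~(ii) is then handled symmetrically by taking $n=2k+1$: now $n+2k'+1$ is even and $n+2k'+2$ is odd, so the ratio is of type ``odd over even'' and the hypotheses on $\tilde q$ again bound it by $\tilde q_{n+2k'+2}/\tilde q_{n+2k'+1}$, which is uniformly bounded by regularity of $\tilde q$.

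The one technical point that must be checked is that $w_m\to 0$ along the relevant tail, equivalently that $\sum_j q_j=\infty$, so that the telescoping identity is legitimate. This is not a real obstacle: regularity gives $q^*_{j+1}\le Cq^*_j$, hence $q^*_j\ge q^*_{j+1}/C$, so the odd and the even partial sums of $(q^*_j)$ are comparable up to the factor $C$; since $\sum_j q^*_j=\infty$, both $\sum_{j\text{ odd}}q^*_j$ and $\sum_{j\text{ even}}q^*_j$ are infinite. In case~(i) this gives $\sum q_n\ge \sum_{n\text{ odd}}q_n\ge \sum_{n\text{ odd}}q^*_n=\infty$, and in case~(ii) the analogous bound through the even indices does the same job. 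The main thing one has to be careful about throughout is matching the parity of the starting index to the direction of the pointwise comparison supplied by the hypotheses; this matching is precisely why one obtains $\liminf a_{2k}>0$ in (i) and $\liminf a_{2k+1}>0$ in (ii), rather than a uniform statement over both parities.
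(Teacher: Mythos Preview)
Your argument is correct, but it takes a somewhat different route from the paper's. The paper does not reopen the proof of Lemma~\ref{simplemma}; instead it first establishes a monotonicity property of $a_n$ in the individual $q_i$'s (for $n=2k$, the quantity $a_n$ is decreasing in every $q_i$ with $i$ even and increasing in every $q_i$ with $i$ odd, and the parities flip for $n=2k+1$). This is verified by noting that the coefficient of each $q_k$ in the expansion of $a_n$ is itself a Leibniz series. Then the paper applies Lemma~\ref{simplemma} as a black box to the auxiliary regular sequence~$(q^*_n)$, obtaining $\liminf_k a^*_{2k}>0$, and the monotonicity transfers this bound to the original sequence: passing from $q^*$ to $q$ means decreasing even-indexed entries and increasing odd-indexed ones, both of which only raise $a_{2k}$.

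Your approach bypasses the monotonicity statement entirely. You observe that in the telescoping argument of Lemma~\ref{simplemma} only the ratios $q_{n+2k'+2}/q_{n+2k'+1}$ (and not all consecutive ratios) need to be bounded, and for $n$ of the right parity these ratios are dominated termwise by the corresponding ratios of the regular comparison sequence via the pointwise hypotheses. This is slightly more economical, since it avoids the separate verification of how $a_n$ depends on each $q_i$; on the other hand, the paper's monotonicity argument is conceptually cleaner and reusable, since it explains \emph{why} the parity of $n$ and of the comparison direction must match. Your check that $\sum q_n=\infty$ (needed for $w_m\to 0$) via the comparability of odd and even partial sums of a regular sequence is also fine; the paper hides this inside the hypothesis that $q^*$ is not summable together with its monotonicity transfer.
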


\begin{proof}[Proof of Lemma \ref{cool.lemma}]
Since $0\le q_n\le 1/2$ for $n\ge 1$, it is easy to check the following (for example by observing that for $k>n$, the coefficients of $q_k$ in $a_n$ form a Leibniz series as well):
\begin{itemize}
\item Let $n=2k$. Then $a_n$ is decreasing\footnote{The terms {\em increasing} and {\em decreasing} are not used in the strict sense.} in all $q_i$ for which $i$ is even and  increasing in all $q_i$ for which $i$ is odd. 

\item Let $n=2k+1$. Then $a_n$ is increasing in all $q_i$ for which $i$ is even and  decreasing in all $q_i$ for which $i$ is odd. 
\end{itemize}

Turning to the proof of (i) (a similar proof works for (ii), which we omit), note that, because of its monotonicity and non-summability (use $I=\infty$ and $q^*_{2k}\ge q_{2k}$), STEP 1 yields that $(q^*_n)$ is such that $\liminf a_n>0$, and in particular, $\liminf_k a_{2k}>0$. Hence, by the first bullet point above, $\liminf_k a_{2k}>0$ also for $(q_n)$, proving (i).
\end{proof}
\begin{proof}[Proof of Theorem~\ref{invar}] First, without the loss of generality, we assume that $m_0=1$ (changing a finite number of terms does not change the validity of the invariance principle). Similarly, we may and will  assume that $q_n\le 1/2$ for all $n\ge 1$, as we assume anyway that $q_n\to 0$.

We only need that $v_n=4\sum_{i=1}^n a_i^2\, p_i\, q_i\to\infty,$ what is left is very similar to STEP 1. This will follow from $p_n\to 1$ and Assumption~\ref{assA}, provided that either $\liminf_k a_{2k}>0$ or $\liminf_k a_{2k+1}>0$. By Lemma \ref{cool.lemma}, it is sufficient to construct either a sequence~$(q_n^*)$ or a sequence $(\tilde q_n)$ satisfying the properties in the lemma. These sequences will be automatically divergent, given Assumption~\ref{assA} and that~$(q^*_n)$ resp.~$(\tilde q_n)$ dominate~$(q_n)$ for  even resp.\ odd~$n$'s. Now, assume for example \eqref{evens.dominate} (assuming \eqref{odds.dominate} leads to a similar argument). Define
\begin{eqnarray*}
\tilde q_{2m}:=C\max\{q_{2\ell+1},\ell\ge m\},\  m\ge 1;\\
\tilde q_{2m+1}:=\max\{q_{2\ell+1},\ell\ge m\},\  m\ge 0.
\end{eqnarray*}
Then $(\tilde q_n)$ is regular because $\frac{\tilde q_{n+1}}{\tilde q_n}\le \max\{C^{-1},C\}$ for all $n\ge 1$, and trivially $q_{2m}\ge \tilde q_{2m}$ and $\tilde q_{2m+1}\ge q_{2m+1}$. Hence, $\liminf_k a_{2k+1}>0$ by Lemma \ref{cool.lemma}(ii).
\end{proof}
\begin{remark}[One of the two subsequences can be arbitrary]
Chose an arbitrary ``odd" subsequence, satisfying the conditions that it tends to zero and yet not summable. Then take a sufficiently large ``even" subsequence that dominates it in the sense of~\eqref{evens.dominate}, but still tends to zero (for example, let  $q_{2n}:=1/\sqrt{2n}$ and $q_{2n+1}:=1/(2n+1)$). Then \eqref{evens.dominate} holds, while the condition $\limsup_n q_{n+1}/q_n<\infty$ (cf. \eqref{qn_behave} in the proof) fails to hold, as $\lim_n q_{2n}/q_{2n+1}=\infty$.

By the same token, one can first chose an arbitrary non-summable ``even" sequence, with the terms tending to zero and then a dominating ``odd" one.$\hfill\diamond$
\end{remark}

\subsection{Proof of Proposition~\ref{equiv.for.b}}
Recall that $S_n=M_n+(1-a_n)Y_n$, hence
$$
\Var(S_n)=\Var(M_n)+(1-a_n)^2+2(1-a_n)\Cov(M_n, Y_n),
$$ 
where, by Cauchy-Schwarz, $|\Cov(M_n, Y_n)|\leq \sqrt{\E(M_n^2)}= \sqrt{v_n}$,  so 
\begin{align*}
\sigma_n^2-v_n=(1-a_n)(1-a_n+2\Cov(M_n, Y_n))=(1-a_n)(1-a_n+A_n\sqrt{v_n}),
\end{align*} 
where $|A_n|\le 1$. Then
$$
\frac{\sigma^2_n}{v_{n}}-1=\frac{1-a_n}{\sqrt{v_n}}\cdot \left(\frac{1-a_n}{\sqrt{v_n}}+A_{n}\right),
$$
if $v_n\to \infty$ and $a_n = o(\sqrt{v_n})$ as $n\to\infty$, hence  $\sqrt{v_n}\sim \sigma_n$ follows. 

Similarly, we have
\begin{equation*}
1-\frac{v_{n}}{\sigma_n^2}=\frac{1-a_n}{\sigma_{n}}\left(\frac{1-a_n}{\sigma_{n}}+A_n\sqrt{v_{n}}/\sigma_n\right).
\end{equation*} 
Using the shorthands $\ds w_n:=\frac{\sqrt{v_{n}}}\sigma_n$ and $\ds b_n:=\frac{1-a_{n}}{\sigma_{n}}$, one obtains  the quadratic equation $w_n^2+b_nA_nw_n+b_n^2-1=0,$ where $b_n\to 0$. Hence
$$
w_n=\frac{-b_nA_n\pm\sqrt{b_n^2A_n^2+4(1-b^2_n)}}2,
$$
but of course $w_n\ge 0$. Therefore, $b_n\to 0$ implies that $w_n\to 1$, that is, $\sqrt{v_n}\sim \sigma_n$. This is clearly the case when  $\sigma_n\to\infty$   and $a_n =o(\sigma_n)$ as $n\to\infty.\hfill\qed$

\subsection{Proof of Theorem \ref{Lower.Main.Thm} -- strongly critical case}
First, it is easy to see that if~$X$ is a symmetric random variable, concentrated on $[-t,t]$, then $\Var(X)\le t^2$, with equality if and only if the law of~$X$ is $\frac{1}{2}(\delta_{-t}+\delta_{t})$. 

Now assume that $\lim_{n\to\infty}np_n=0$. 
By a well-known criterion for tightness (see Theorem 4.10 in \cite{KS1991}), the laws of the $S^{(n)}$ are tight on $C([0,T])$ if besides the condition $\lim_{\eta\to+\infty} \sup_{n\geq 1} \P(S^{(n)}(0)>\eta)=0$, one also has
\begin{align*} 
\lim_{\delta\downarrow 0} \sup_{n\geq 1} \P\left(\underset{0\leq t,s\leq t_k}{\max_{|t-s|\leq \delta}}|S^{(n)}(t)-S^{(n)}(s)|>\epsilon\right)=\lim_{\delta\downarrow 0}\P(\delta>\epsilon)=0,\ \forall \epsilon>0.
\end{align*}
Since $S^{(n)}(0)=0,\ n\ge 1$, the first condition clearly holds. The second one is satisfied by the uniform Lipschitz-ness:  $|S^{(n)}(t)-S^{(n)}(s)|\leq |t-s|,\ n\ge 1$.

Given tightness on $C([0,T])$, it is sufficient to show that the limit at time $t>0$ is $\frac{1}{2}(\delta_{-t}+\delta_{t})$, that is, it satisfies $\Var(X)\ge t^2$. Indeed, the only continuous functions $f$ on $(0,T)$ satisfying  $|f(t)|=t$ are $f(t)=t$ and $f(t)=-t$. For simplicity we will work with $t=1$ (otherwise use a simple scaling), that is we will show that  every partial limit at time $t=1$ is such that its variance is at least one.

To achieve this, fix $N\ge 1$ and recall from~\cite{EV2018} (see the two displayed formulae right before Theorem 3 there) that 
$$
\Var\left(\frac{S_N}N \right)=\frac 1N +\frac{2}{N^2}\sum_{1\le i_1<i_2\le N}e_{i_{1},i_2}.
$$
This quantity is monotone decreasing in all $p_n$'s as long as they are all less or equal than $1/2$, because the same holds for each fixed $e_{i,j}$. Fix $\epsilon>0$ and let $N=N(\epsilon)$ be such that $\epsilon/N\le 1/2$ and that also $\epsilon/n>p_n$ holds for all $n>N$. Define $\hat p_n$ so that it coincides with $p_n$ for $n\le N$ and $\hat p_n=a/n$ for $n>N$. By  monotonicity, 
$$
\Var\left(\frac{S_{n}}{n}\right)\ge \Var\left(\frac{\hat S_{n}}{n}\right),\ n\ge 1,
$$
where $\hat S$ is the walk for the sequence $(\hat p_n)$.  

In~\cite{EV2018} it was shown that 
$$
\lim_{n\to\infty}\Var\left(\frac{\hat S_{n}}{n}\right)=\frac{1}{2\epsilon+1}
\quad\Longrightarrow\quad
\liminf_{n\to\infty}\Var\left(\frac{S_{n}}{n}\right)\ge\frac{1}{2\epsilon+1}.$$
Since $\epsilon>0$ was arbitrary, 
$$
\liminf_{n\to\infty}\Var\left(\frac{S_{n}}{n}\right)\ge 1.
$$
Now, if $S_{n_{j}}/n_j\to X$ in law, then 
$$
\lim_{j\to\infty}\Var\left(\frac{S_{n_{j}}}{n_{j}}\right)=\Var(X),
$$ 
because $\E(S_n)=0$ and the variables are all supported in $[-1,1]$ (and so the test function $f(x)=x^2$ is admissible). From the last two displayed formula, we have that $\Var(X)\ge 1$ and we are done.
$\qed$

\subsection{Proof of Theorem \ref{Lower.Main.Thm} -- supercritical case}
By the Borel-Cantelli Lemma, for almost every $\omega$, either $S_n(\omega)=1$ for all large $n$ or $S_n(\omega)=-1$ for all large $n$. As $n\to\infty$, in the first case the path converges uniformly to a straight line with slope~$1$; in the second case it converges uniformly to a straight line with slope $-1$. $\qed$


\subsection{Proof of Theorem \ref{Lower.Main.Thm} -- critical case}
Fix $T>0$, denote by ${\Mc}_T$ the set of all locally finite point measures on the interval $(0, T]$, and denote by $N^{(n)}=N^{(n,T)}$ the laws of the point processes induced by the turns of the walk $S^{(n)}$ on the time interval $(0,T]$. 

Let $t\in (0,T)$; we {\bf assign a continuous (zigzagged) path that increases at\footnote{I.e. it increases on $[t,t+\epsilon]$ for some small $\epsilon>0$.}~$t$ to each point measure.}
\begin{definition}[Assigning paths]
Define the map $\Phi_{t}: {\Mc}_T \rightarrow C[0,T]$ as follows.
\begin{itemize}
\item First, label the (countably many) atoms on $(0, t]$ from right to left as $a_1,a_2,...,$ i.e., the closest one on the left to $t$ as $a_1$, the second closest as $a_2$, etc., and note that $t=a_1$ is possible; also label the atoms on $(t, T]$,  from the closest to the furthest as $b_1, b_2$,...;
\item assign ``$+$" sign to the  intervals (the union of which is denoted by $S^+_t$) $$...[a_7, a_6),  [a_5, a_4), [a_3, a_2), [a_1, b_1), [b_2,b_3), [b_4,b_5), [b_6,b_7),...;$$
\item assign ``$-$" sign to the intervals (the union of which is denoted by $S^-_t$) $$...[a_8, a_7) , [a_6, a_5), [a_4, a_3), [a_2, a_1), [b_1,b_2), [b_3,b_4), [b_5,b_6),...$$
 \end{itemize}
Let  $\mu\in {\Mc}_T$. For  $0<r\leq T$,  define
\begin{align}\label{Phi} 
\Phi_t(\mu)(r):= L((0,r]\cap S^+_t)-L((0,r]\cap S^-_t),\ \mathrm{with}\ \Phi_t(\mu)(0):=0,
\end{align}
where $L$ is the Lebesgue measure on the real line. Then $\Phi_t(\mu)(\cdot)$ is well-defined and continuous on $[0,T]$. Intuitively,  it  describes the difference between the total length of increasing parts and the total length of  decreasing parts, assuming increase at $t$. Clearly,
\begin{align} \label{Phi_r}
|\Phi_t(\mu)(r)|\leq r,\  0< r\le T.
\end{align}
\end{definition}
\begin{remark}\label{formerlynote}
The case $t=0$ is excluded, i.e. one cannot set the  path $\Phi_t(\mu)(\cdot)$ to first increase at $t=0$, as our point measures  may not be locally finite around $0$. For instance, we will show that $N_n$ converges to a limiting Poisson Point Process (PPP)~$N$, and this $N$ explodes at $0$. However, for $t>0$, $\Phi_t(r)\rightarrow 0$, as $r\rightarrow 0$.
\end{remark}

We now turn to the case of a PPP with intensity $\frac{c}{x}$ (we replaced the constant $a$ of Theorem \ref{Lower.Main.Thm} by $c$ in the proof to avoid confusion).
\begin{proposition}\label{turn_distribution}(Turning points $\to$ PPP with intensity  $\frac{c}{x}$)
Given  $0<a<b<\infty$, $c>0$, set $p_n=\frac{c}{n}\wedge 1,$ and denote the number of turns from step $\lceil an\rceil+1$ to step $\lceil bn\rceil$ by $N^{(n)}((a,b])$. Denoting $\mu_{c;a,b}:=c\ln(b/a)=\int_a^b \frac{c}{x}\, \mathrm{d}x,$ one has
\begin{itemize}
\item[(i)] for $k\ge 0,\ 0<a<b$, as $n\to\infty$,
\begin{align}\label{estimation}
\P\left( N^{(n)}((a,b])=k \right)&=\exp(-\mu_{c;a,b}) \frac{\mu_{c;a,b}^k}{k!}+O\left(\frac{1}{n}\right);\\
 {\sf Law}(N^{(n)}((a,b]))&\overset{n\rightarrow\infty}{\longrightarrow} {\sf Poiss}(\mu_{c;a,b});\label{limit_law}
\end{align}
\item [(ii)] given $0< t_1<t_2<...<t_l<\infty$, the random variables $$N^{(n)}( (t_1,t_2]),N^{(n)}((t_2,t_3]),...,N^{(n)}((t_{l-1},t_l])$$ are independent (independent increments), and
\begin{align*}
&{\sf Law}\left( N^{(n)}((t_1,t_2]), N^{(n)}((t_2,t_3]),...,N^{(n)}((t_{l-1},t_l] )\right) \\
&\ \ \ \overset{n\longrightarrow\infty}{\longrightarrow}{\sf Poiss(c)} \left( (\mu_{c;t_1,t_2}), (\mu_{c;t_2,t_3})...,(\mu_{c;t_{l-1},t_l}) \right),
\end{align*}
where ${\sf Poiss}(c)={\sf Poiss}((0,\infty),\frac{c}{x}\, \mathrm{d}x)$ is the law of the PPP with intensity $\frac{c}{x}\, \mathrm{d}x$ on $(0,\infty)$.
\end{itemize}
\end{proposition}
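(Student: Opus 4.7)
\medskip
\noindent\emph{Proof plan for Proposition~\ref{turn_distribution}.}
My approach is to reduce everything to Le Cam's Poisson approximation applied to the indicator variables $W_i$ of turns. Recall that $N^{(n)}((a,b])=\sum_{i=\lceil an\rceil+1}^{\lceil bn\rceil} W_i$, where the $W_i$ are independent Bernoulli variables with parameters $p_i=c/i$ (for $i$ large enough, which holds on the range of summation once $n$ is large). Thus $N^{(n)}((a,b])$ is a Poisson binomial, and part (ii) will follow almost for free once part (i) is established, because the $W_i$'s over disjoint index ranges are independent.

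For part (i), the first step is to control the mean. Using the standard asymptotic $\sum_{i=A}^B i^{-1}=\log(B/A)+O(1/A)$, one obtains
$$\lambda_n:=\sum_{i=\lceil an\rceil+1}^{\lceil bn\rceil}\frac{c}{i}=c\log(b/a)+O(1/n)=\mu_{c;a,b}+O(1/n).$$
The second step is to apply Le Cam's inequality: the total variation distance between the law of $N^{(n)}((a,b])$ and $\mathsf{Poiss}(\lambda_n)$ is at most
$$\sum_{i=\lceil an\rceil+1}^{\lceil bn\rceil}p_i^2=c^2\sum_{i=\lceil an\rceil+1}^{\lceil bn\rceil}\frac{1}{i^2}=O(1/n).$$
Hence $\bigl|\P(N^{(n)}((a,b])=k)-e^{-\lambda_n}\lambda_n^k/k!\bigr|=O(1/n)$ uniformly in $k$. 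The third step is a continuity estimate: for each fixed $k$, the map $\lambda\mapsto e^{-\lambda}\lambda^k/k!$ is Lipschitz on bounded intervals, so
$$\Bigl|e^{-\lambda_n}\tfrac{\lambda_n^k}{k!}-e^{-\mu_{c;a,b}}\tfrac{\mu_{c;a,b}^k}{k!}\Bigr|=O(|\lambda_n-\mu_{c;a,b}|)=O(1/n).$$
Combining these three steps yields \eqref{estimation}, and \eqref{limit_law} is then an immediate consequence.

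Part (ii) is now essentially a book-keeping exercise. The increments $N^{(n)}((t_j,t_{j+1}])$ for $j=1,\dots,l-1$ are sums of $W_i$ over pairwise disjoint index ranges, and since the $W_i$ are independent Bernoulli, the increments themselves are independent. The joint convergence follows from this exact independence at each finite $n$ together with the one-dimensional convergence from part (i): for any bounded continuous test function of the form $\prod_j f_j(x_j)$, the expectation factors, each factor converges to the corresponding Poisson expectation, and a density argument extends this to all bounded continuous functions on $\mathbb{N}^{l-1}$.

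The only genuinely delicate point I anticipate is the interaction between the floor/ceiling functions and the $O(1/n)$ error control, particularly near the endpoints $\lceil an\rceil$ and $\lceil bn\rceil$; this requires being careful that boundary effects do not inflate the error beyond the claimed order, but since each $p_i=O(1/n)$ and we only add or subtract $O(1)$ terms at the endpoints, this amounts to an $O(1/n)$ perturbation. Everything else is a direct application of well-known tools, and the proposition is unlikely to require any deeper estimates than Le Cam's inequality and harmonic sum asymptotics.
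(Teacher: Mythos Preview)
Your proof is correct and takes a genuinely different route from the paper. The paper proceeds by direct computation and induction on $k$: it first estimates $\P(N^{(n)}((a,b])=0)$ by multiplying out $\prod_i(1-c/i)$ explicitly and comparing to $\int \mathrm{d}x/x$, then handles $k=1$, and finally sets up an inductive step from $k$ to $k+1$ via a combinatorial identity relating the elementary symmetric sums $\sum_{i_1<\dots<i_k}\prod_j c/(an+i_j)$ for consecutive values of $k$. Your approach bypasses all of this by recognizing $N^{(n)}((a,b])$ as a Poisson binomial and invoking Le Cam's inequality, which gives the total variation bound $\sum_i p_i^2=O(1/n)$ in one line; the remaining shift from $\mathsf{Poiss}(\lambda_n)$ to $\mathsf{Poiss}(\mu_{c;a,b})$ via Lipschitz continuity is routine. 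This is shorter and more conceptual, and it even yields the $O(1/n)$ error in total variation (hence uniformly in $k$), whereas the paper's induction carries $k$-dependent constants inside the $O(1/n)$. The paper's argument has the virtue of being entirely self-contained, not relying on a named approximation theorem, but yours is the more efficient route. Your treatment of part (ii) matches the paper's: both simply observe that the index ranges are disjoint, so the increments are exactly independent for each $n$, and joint convergence follows from the marginals.
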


\begin{proof}(of Proposition \ref{turn_distribution}:)

\noindent{\sf STRATEGY OF THE PROOF:}\
We first prove part (i). Once that is done,  since the turns from step $\lceil t_in\rceil +1$ to step $\lceil t_j n\rceil$ and from $\lceil t_l n\rceil+1$ to  $\lceil t_j n\rceil$ are independent for any $0<t_i<t_j\leq t_l<t_r<\infty$, part (ii) will immediately follow.

Regarding part (i), we only need to prove equation \eqref{estimation}, and then \eqref{limit_law} will easily follow. In fact  we only give here the proof (in three steps) of \eqref{estimation} for $a,b$ integers, i.e., $\lceil an\rceil=an$, $\lceil bn\rceil=bn$, for $n$ large enough; the proof for general $0<a<b$ can then be easily adjusted. 

\medskip
\noindent {\sf STEP 1:}\ Given  $c>0$, and  $n$ large enough,  define 
$$\Pi_{c,n}:= \P(\text{no turn between}\ an+1 \ \text{and}\ bn).$$ 
 We now provide an estimate for $\Pi_{c,n}$, namely
\begin{align}\label{estimation_Pi}  
\Pi_{c,n}=\exp(-\mu_{c; a,b})+O\left(\frac{1}{n}\right).
\end{align}
Indeed,
\begin{align*}
\Pi_{c,n}=& \frac{an+(1-c)}{an+1}\cdot\frac{an+1+(1-c)}{an+2}\cdot\frac{an+2+(1-c)}{an+3}\cdot...\cdot\frac{bn-1+(1-c)}{bn}\\
=&\left( \frac{an+(1-c)}{an}\cdot \frac{an+1+(1-c)}{an+1}...\frac{bn-1+(1-c)}{bn-1} \right)
\\
&\times 
\left( \frac{an}{an+1}\cdot\frac{an+1}{an+2}\cdot...\cdot\frac{bn-1}{bn} \right)
\\
=&\frac{a}{b}\cdot \left( \frac{an+(1-c)}{an}\cdot \frac{an+1+(1-c)}{an+1}...\frac{bn-1+(1-c)}{bn-1} \right)\\
=&\frac{a}{b}\exp\left( \sum_{i=1}^{bn-an}
\ln(an+i-c)-\ln(an+i-1)
\right)
=\frac{a}{b}\exp\left(\sum_{i=1}^{bn-an}\displaystyle\int_{an+i-1}^{an+i-c} \frac{\mathrm{d}x}{x} \right).
\end{align*}
The exponent tends to $(1-c)\ln\frac ba$, and so $\lim_{n\rightarrow\infty}\Pi_{c,n}=\exp(-c\ln(b/a))=\exp(-\mu_{c; a,b})$. Indeed, 
\begin{align*}
\frac{1-c}{an+i-c}\leq \int_{an+i-1}^{an+i-c} \frac{1}{x}\,\mathrm{d}x \leq \frac{1-c}{an+i-1},
\end{align*}
hence
\begin{align*}
\sum_{i=1}^{bn-an}\frac{1-c}{an+i-c}\leq\overset{bn-an}{\sum_{i=1}} \int_{an+i-1}^{an+i-c} \frac{1}{x}\,\mathrm{d}x \leq \overset{bn-an}{\sum_{i=1}}\frac{1-c}{an+i-1},
\end{align*}
where $\ds\lim_{n\rightarrow\infty}\overset{bn-an}{\sum_{i=1}}\frac{1}{an+i-c}=\lim_{n\rightarrow\infty}\overset{bn-an}{\sum_{i=1}}\frac{1}{an+i-1}=\ln(b/a)$. leading to~\eqref{estimation_Pi}.

\medskip
\noindent {\sf STEP 2:}  we now estimate $$\P(N^{(n)}((a,b])=1)=\P(\text{there is one turn from step}\ an+1 \text{to step}\ bn).$$ Note that the turning step can happen at step $an+i$, for $i=1,2,...,bn-an$, with corresponding probabilities  $(\frac{an+1-c}{an+1}\cdot \frac{an+2-c}{an+2}\cdot...\cdot\frac{bn-c}{bn})\cdot \frac{c}{an+i}=\Pi_{c,n}\cdot \frac{c}{an+i},$ $i=0,1,...,bn-an-1$. Thus, 
\begin{align*}
\P(N^{(n)}((a,b])=1)=\Pi_{c,n}\overset{bn-an-1}{\sum_{i=0}} \frac{c}{an+i}=\Pi_{c,n}\cdot c\cdot\Delta_n,
\end{align*}
where 
\begin{align*}
\Delta_n = \overset{bn-an-1}{\sum_{i=0}} \frac{1}{an+i}.
\end{align*}
Since
\begin{align*}
\ln\frac ba = \int_{an}^{bn}\frac{\mathrm{d}x}{x}\leq \Delta_n\leq \int_{an}^{bn}\frac{\mathrm{d}x}{x}+\left(\frac{1}{an}-\frac{1}{an+1}\right)(bn-an)=\ln\frac ba+\frac{(b-a)}{a(an+1)},
\end{align*}
one has
\begin{align}\label{estimation_Lambda}
\Delta_n=\ln\frac ba+O(1/n), 
\end{align}
and then \eqref{estimation_Pi}, \eqref{estimation_Lambda} give
\begin{align*}
\P\left( N^{(n)}((a,b])=1 \right) =\frac{\mu_{c; a,b}}{1!}e^{-\mu_{c;a,b}}+O\left(\frac{1}{n}\right).
\end{align*}

\medskip
\noindent {\sf STEP 3:} we verify \eqref{estimation} using induction, and so we assume that
\begin{align}\label{induction_k}
\P\left( N^{(n)}((a,b])=k \right)=\exp(-\mu_{c;a,b})\frac{\mu_{c;a,b}^k}{k!}+O\left(\frac{1}{n}\right),
\end{align}
and show that $k$ can be replaced by $k+1$ as well. On the the event $\{N^{(n)}((a,b])=k\}$, there should be $k$ turns from step $an+1$ to step $bn+1$, say the turns happen at $an+i_1, an+i_2,...,an+i_k$, where $i_1,...,i_k$ is an increasing sequence taking values in $\{0,1,...,bn-an-1\}$. Similarly to the $k=1$ case, the probability for this to happen is  
\begin{align*}
p=\Pi_{c,n}\cdot \left(\frac{c}{an+i_1}\frac{c}{an+i_2}...\frac{c}{an+i_k}\right).
\end{align*}
Then  $\P(N^{(n)}((a,b])=k)$ is the sum of all such terms, i.e.,
\begin{align*}
 \P(N^{(n)}((a,b])=k)=\Pi_{c,n}\cdot c^k\cdot\sum_{0\leq i_1<\dots<i_k\leq bn-an-1}\frac{1}{an+i_1}\frac{1}{an+i_2}...\frac{1}{an+i_k}.
\end{align*} 
By assumption \eqref{induction_k} and the estimate~\eqref{estimation_Pi}, we have
\begin{align}\label{induction} 
 \sum_{0\leq i_1<\dots<i_k\leq bn-an-1}\frac{1}{an+i_1}\frac{1}{an+i_2}...\frac{1}{an+i_k}=\frac{(c \ln(\frac ba))^k}{k!}+O\left(\frac{1}{n}\right)=\frac{\mu_{c;a,b}^k}{k!}+O\left(\frac{1}{n}\right).
\end{align}
Similarly,
\begin{align*}
\P(N^{(n)}((a,b])=k+1)=\Pi_{c,n}\sum_{0\leq i_1<\dots<i_{k+1}\leq bn-an-1}\frac{c}{an+i_1}\frac{c}{an+i_2}\dots\frac{c}{an+i_{k+1}},
\end{align*}
where the sequence $i_1<i_2<...<i_k<i_{k+1}$ takes values in $\{0,1,...,bn-an-1\}$. Now 
\begin{align*}
\frac{c}{an+j}\P(N^{(n)}((a,b])=k)=\Pi_{c, n}\sum_{0\leq i_1<i_2<...<i_k\leq bn-an-1}\frac{c}{an+i_1}\frac{c}{an+i_2}...\frac{c}{an+i_k}\frac{c}{an+j},
\end{align*}
for $j=0,1,...,bn-an-1$. Now consider the sum
\begin{align}\label{main_step}
&\left(\sum_{j=0}^{bn-an-1}\frac{c}{an+j}\right)  \P\left( N^{(n)}((a,b])=k \right) \\
=& \overset{bn-an-1}{\sum_{j=0}} \left( \Pi_{c,n}\cdot\sum_{0\leq i_1<\dots<i_k\leq bn-an-1}\frac{c}{an+i_1}\frac{c}{an+i_2}...\frac{c}{an+i_k}\frac{c}{an+j}  \right).\nonumber
\end{align} 
In each sum on the right-hand side, there are two different kinds of terms: terms of the type 
$$
\frac{c}{an+i_1}\frac{c}{an+i_2}...\frac{c}{an+i_k}\frac{c}{an+i_{k+1}},
$$
where $i_m, m=1,2,...,k+1$ are all different (no repetitions), and terms of the type
$$
\frac{c}{an+i_1}\frac{c}{an+i_1}\frac{c}{an+i_2}...\frac{c}{an+i_k},
$$
where $i_m, m=1,2,...,k$ are all different (one repetition). We then rearrange the right-hand side: sum the ``non-repeating" terms as one group, denoted by $I$; sum the  ``once repeating" ones where the term $\frac{c}{an+j}$ is the one repeated by $I_j$, $j=0,1,..., bn-an-1$, and we estimate $I$, $I_j$ separately.
\begin{align*} 
I=&(k+1)\cdot \left( \Pi_{c,n}\sum_{i_1<i_2<...<i_k<i_{k+1}}\frac{c}{an+i_1}\frac{c}{an+i_2}...\frac{c}{an+i_k}\frac{c}{an+i_{k+1}} \right)\\
=&(k+1)\cdot \P\left( N^{(n)}((a,b])=k+1 \right),
\end{align*}
since each product $\frac{c}{an+i_1}\frac{c}{an+i_2}...\frac{c}{an+i_k}\frac{c}{an+i_{k+1}}$ appears $k+1$ times in sum $I$. Further,
\begin{align*}
 I_j=&\frac{c^2}{(an+j)^2}\left(  \Pi_{c,n}\underset{i_m\neq j}{\sum_{0\leq i_1<i_2<...<i_k\leq bn-an-1}}\frac{c}{an+i_1}\frac{c}{an+i_2}...\frac{c}{an+i_k} \right)\\
 \leq& I_0
 =\frac{c^2}{(an)^2}\left(  \Pi_{c,n}\sum_{1\leq i_1<i_2<...<i_k\leq bn-an-1}\frac{c}{an+i_1}\frac{c}{an+i_2}...\frac{c}{an+i_k} \right)\\
 \leq &\frac{c^2}{(an)^2}P\left( N^{(n)}((a,b])=k \right)=\frac{c^2}{(an)^2}\left( \exp(-\mu_{c;a,b})\frac{\mu_{c;a,b}^k}{k!}+O\left(\frac{1}{n}\right) \right),
\end{align*}
hence
\begin{align*}
\sum_{j=0}^{bn-an-1} I_j &\leq (bn-an)\cdot (I_0)\leq \frac{bn-an}{(an)^2}\cdot \left( \frac{\mu_{c;a,b}^k}{k!} e^{-\mu_{c;a,b}}+O\left(\frac{1}{n}\right) \right)
=O\left(\frac1n\right).
\end{align*}
Now, by estimations of $I, I_j$s, \eqref{main_step} is written as
\begin{align*}
&(k+1)\cdot \P\left( N^{(n)}((a,b])=k+1 \right)+O\left(\frac{1}{n}\right)=I+\overset{bn-an-1}{\sum_{j=1}}I_j\\
 =&\left(\sum_{j=0}^{bn-an-1}\frac{c}{an+j}\right)  \P\left( N^{(n)}((a,b])=k \right)\\
 =&\left( \mu_{c;a,b}+O\left(\frac{1}{n}\right) \right)\cdot\left( \exp(-\mu_{c;a,b})\frac{\mu_{c;a,b}^k}{k!}+O\left(\frac1n\right) \right)
 =\frac{\mu_{c;a,b}^{k+1}}{k!} e^{-\mu_{c;a,b}}+O\left(\frac1n\right),
\end{align*}
and we conclude that \eqref{induction_k} holds with $k$ replaced by $k+1$. This completes the proof of Step 3, and of the proposition altogether.
\end{proof}
{\bf Note:} We use the endpoints $\lceil an \rceil +1, \lceil bn \rceil$ because $\frac{\lceil an\rceil+1,}{n}\rightarrow a^+, \frac{\lceil bn\rceil}{n}\rightarrow b$, so the above limit represents the number of turns in  (a,b] in the scaling limit.
\\[5mm]
In the sequel we will consider measures equipped with both the weak and the vague topologies.  When we consider laws on $C([0,T],\|.\|_{[0,T]})$ where $\|.\|_{[0,T]}$ denotes supremum norm, weak convergence is denoted by $\overset{w}{\rightarrow}$. When one uses vague topology for measures and random measures are considered, $\mathcal X_n\overset{vd}{\rightarrow}\mathcal X$ will be used for convergence  in distribution.

\begin{proposition}[Convergence for point measures and  paths]\label{N_and_Phi}

Let $0<t<T$. Then
\begin{itemize}
\item[(i)] As $n\to\infty$, $N^{(n)}\overset{vd}{\rightarrow} {\sf Poiss}(c)$ on $\mathcal M_T$ equipped with the vague topology, where ${\sf Poiss}(c)$ is the PPP on $(0, T]$ with intensity $\frac{c}{x}\, \mathrm{d}x$.
\item[(ii)] $\Phi_t:{\Mc}_T\to C[0,T]$ is a continuous and uniformly bounded functional, when the former space is equipped with the vague topology, and the latter with  the supremum norm $\|.\|_{[0,T]}$.
\item[(iii)]  As $n\to\infty$, $\Phi_t(N^{(n)})\overset{w}{\rightarrow}\Phi_t({\sf Poiss}(c))$ on $C([0,T],\|\cdot\|)$.
\end{itemize}
\end{proposition}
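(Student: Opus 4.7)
The plan is to establish parts (i) and (ii) separately and combine them via the Continuous Mapping Theorem for part (iii). For (i), Kallenberg's theorem states that on the locally compact space $(0,T]$, vague convergence in distribution $N^{(n)}\overset{vd}{\rightarrow} N$ for simple point processes is equivalent to convergence of all finite-dimensional distributions $\bigl(N^{(n)}(I_1),\dots,N^{(n)}(I_k)\bigr)\overset{d}{\rightarrow}\bigl(N(I_1),\dots,N(I_k)\bigr)$ for disjoint relatively compact intervals $I_j=(s_j,s_{j+1}]\subset(0,T]$. The simplicity of $\mathsf{Poiss}(c)$ is clear, and the required finite-dimensional convergence is exactly Proposition~\ref{turn_distribution}(ii).

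For part (ii), uniform boundedness $\|\Phi_t(\mu)\|_{[0,T]}\le T$ is immediate from~\eqref{Phi_r}. Continuity will be established at every $\mu\in\mathcal{M}_T$ satisfying $\mu(\{t\})=0$. Let $\mu_n\to\mu$ vaguely, fix $\eta>0$, and pick $\epsilon\in(0,\min\{t,\eta/4\})$ with $\mu(\{\epsilon\})=0$, which is possible because $\mu$ has at most countably many atoms. The crucial locality observation is that the labels $a_j$, $b_j$ attached to the atoms of $\nu\in\mathcal{M}_T$ lying in $[\epsilon,T]$ depend only on those atoms: any atom of $\nu$ in $(0,\epsilon)$ receives a strictly larger $a$-index and therefore affects the sign assignment only on $[0,\epsilon)$. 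Vague convergence together with $\mu(\{\epsilon\})=\mu(\{t\})=0$ implies that for $n$ large the ordered atom configuration of $\mu_n$ on $[\epsilon,T]$ has the same cardinality as that of $\mu$, with atom positions converging. Writing, for $r\ge\epsilon$,
\[
\Phi_t(\nu)(r)-\Phi_t(\nu)(\epsilon)=\int_\epsilon^r \mathrm{sign}_\nu(s)\,ds,
\]
where $\mathrm{sign}_\nu(s)\in\{\pm 1\}$ depends only on the atoms of $\nu$ in $[\epsilon,T]$, dominated convergence yields
\[
\sup_{r\in[\epsilon,T]}\bigl|[\Phi_t(\mu_n)(r)-\Phi_t(\mu_n)(\epsilon)]-[\Phi_t(\mu)(r)-\Phi_t(\mu)(\epsilon)]\bigr|\to 0
\]
as $n\to\infty$. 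Combined with $|\Phi_t(\cdot)(\epsilon)|\le\epsilon$ from~\eqref{Phi_r}, and the trivial bound $|\Phi_t(\mu_n)(r)-\Phi_t(\mu)(r)|\le 2\epsilon$ for $r\in[0,\epsilon]$, this yields $\limsup_n\|\Phi_t(\mu_n)-\Phi_t(\mu)\|_{[0,T]}\le 2\epsilon<\eta$.

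For part (iii), the intensity $\frac{c}{x}\,dx$ is diffuse, so $\mathsf{Poiss}(c)(\{t\})=0$ almost surely. Hence by part (ii), $\Phi_t$ is continuous at almost every realization of $\mathsf{Poiss}(c)$, and the Continuous Mapping Theorem applied with part (i) gives $\Phi_t(N^{(n)})\overset{w}{\rightarrow}\Phi_t(\mathsf{Poiss}(c))$ in $C([0,T])$. The principal difficulty lies in part (ii): the labeling defining $\Phi_t$ is globally coupled through the alternating signs, and the limit PPP almost surely has infinitely many atoms accumulating at $0$, so one cannot take the limit atom by atom. The rescue combines the locality of the labels in $[\epsilon,T]$, meaning perturbations near $0$ do not reshuffle labels above $\epsilon$, with the Lipschitz bound~\eqref{Phi_r} that forces the unresolved contribution below $\epsilon$ to vanish as $\epsilon\downarrow 0$.
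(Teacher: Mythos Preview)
Your proof is correct and follows the same overall strategy as the paper: Kallenberg's convergence criterion together with Proposition~\ref{turn_distribution} for~(i), a cutoff-at-$\epsilon$ argument exploiting that the atom configuration on $[\epsilon,T]$ stabilizes for~(ii), and the Continuous Mapping Theorem for~(iii).

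One refinement you make is worth pointing out. You prove continuity of $\Phi_t$ only at those $\mu$ with $\mu(\{t\})=0$ (and choose $\epsilon$ avoiding atoms of $\mu$), whereas the paper asserts unrestricted continuity. Your caution is in fact necessary: $\Phi_t$ is \emph{not} continuous at measures having an atom at $t$, since a perturbation pushing that atom to the right of $t$ relabels it from $a_1$ to $b_1$ and flips every sign along the path. So the paper's statement~(ii), read literally, is slightly too strong. Your weaker version, combined with the observation that the diffuse intensity $\frac{c}{x}\,\mathrm{d}x$ forces $\mathsf{Poiss}(c)(\{t\})=0$ almost surely, is exactly what the Continuous Mapping Theorem needs for~(iii), and your locality remark (labels on $[\epsilon,T]$ are insensitive to atoms in $(0,\epsilon)$) makes explicit the mechanism the paper leaves implicit.
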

\begin{proof}
(of Proposition \ref{N_and_Phi}:)
{\sf (i)} In order to use Lemma \ref{lemma_vague} of the Appendix, one needs to define a new metric on $(0, T]$ by $\rho(x,y):=|1/x-1/y|$. Then $\Delta :=((0,T],\rho)$ is a complete separable metric space; notice that $(0,\epsilon]$ is not bounded under $\rho$. Setting $\Ic:=\{(a,b], 0<a<b\leq T\}$,  it is obvious that $\Ic$ is a semi-ring of bounded Borel sets in $\Delta$, and $\mu(\partial (a,b])=\mu(\{a\}\cup \{b\})=0$, hence $\Ic\subset \widehat{\Delta}_{E {\sf Poiss}(c)}$, where  $\widehat{\Delta}_{E {\sf Poiss}(c)}$ is the class of all bounded sets $A \subset \Delta$ with $E\sf Poiss(c) (\partial A)=0$. Then by Lemma \ref{lemma_vague} of the Appendix, we only need to prove, $N^{(n)}(f)\overset{d}{\rightarrow} {\sf Poiss}(c)(f)$, for any $f\in \hat{\Ic}_+$, i.e., any $f$ with $f=\overset{k}{\sum_{i=1}}c_i1_{(a_i,b_i]}$, where  $(a_i, b_i]\in \Ic$ and $a_i>0$. Note  that $f$ is undefined on $(0, \min a_i]$. Then  $N^{(n)}\overset{vd}{\rightarrow} {\sf Poiss}(c)$ on $(0,T]$ follows from 
$
N^{(n)}(1_{(a,b]})\overset{d}{\rightarrow}{\sf Poiss}(c)(1_{(a,b]})$ for $0<a<b\leq T,$ which in turn, follows from Proposition \ref{turn_distribution}.

{\sf (ii)} Assume that $\mu_n,\mu \in {\Mc}_T$, and $\mu_n\overset{v}{\rightarrow} \mu$. Then for any $\varepsilon>0$ small enough, $\mu_n\overset{v}{\rightarrow} \mu\ \text{on}\ [\varepsilon, T].$ Since $\mu$ is locally finite, it has finitely many atoms on $[\varepsilon, T]$, say $\varepsilon\leq x_1 \leq ...\leq x_l\leq T$. It easy to see that $\exists\ n_0$ such that for any $n\ge n_0$, $\mu_n$ also has~$l$ atoms there. Moreover, $\exists\ K=K(\varepsilon,l)\geq n_0$, such that, for any $n\geq K$,
\begin{align*} 
 |x_i^{(n)}-x_i|\leq \frac{\varepsilon}{2(l+2)^2} , \quad \text{for all}\ i=1,2,...,l.
\end{align*}
By \eqref{Phi_r}, $|\Phi_t(\mu_n)(\varepsilon)-\Phi_t(\mu)(\varepsilon)|\leq 2\varepsilon,$
and by definition \eqref{Phi}, we have
\begin{align*}
|\Phi_t(\mu_n)(t)-\Phi_t(\mu)(t)|\leq \frac{l+2}{(l+2)^2}\varepsilon,\ \text{so}\\
\|\Phi_t(\mu_n)-\Phi_t(\mu)\|_{[0, T]}\leq \frac{(l+2)^2}{(l+2)^2}\varepsilon=\varepsilon,\ n\ge K.
\end{align*}
Hence, $\Phi_t$ is continuous. Moreover, $\|\Phi_t(\mu)\|_{[0,T]}\leq T$, so $\Phi_t$ is also uniformly bounded. 

Finally, (iii) immediately follows from (i), (ii) and Lemma \ref{lemma_vague}, completing the proof of Proposition \ref{N_and_Phi}.
\end{proof}

Having Proposition \ref{N_and_Phi} at our disposal, it is now easy to prove that the processes $S^{(n)}$ in the statement of the theorem converge in law to the zigzag process, by checking the convergence of the finite dimensional distributions, and then tightness. 

\medskip
\noindent\underline{Convergence of fidi's:} Given $0<t_1<t_2<...<t_k$, to check that the law of $(S^{(n)}_{t_1},..., S^{(n)}_{t_k})$ converges as $n\to\infty$, let $A_1,A_2,...,A_k\subset\R$ be Borel sets, and denote 
$$
\vec{A}:=(A_1,...,A_k),\quad 
-\vec{A}:=(-A_1,...,-A_k);
$$
$$
(S^{(n)}_{\vec{t}}\in \vec{A}):=\left( S^{(n)}_{t_1}\in A_1,...,S^{(n)}_{t_k}\in A_k \right);
$$
$$
(\Phi_t(u)_{\vec{t}}\in \vec{A}):=\left( \Phi_t(u)_{t_1}\in A_1,...,\Phi_t(u)_{t_k}\in A_k \right).
$$
Moreover, $\{S^{(n)}_s=+\}$ ($\{S^{(n)}_s=-\}$) will denote the event that the zigzag path is increasing (decreasing) at $s^+$, by which we mean that there exists a small interval $[s,s+\epsilon]$ such that $S^{(n)}$ has slope $1$ ($-1$) on $(s,s+\epsilon)$. Then
\begin{align*}
\P\left( S^{(n)}_{\vec{t}}\in \vec{A} \right)=&
\P\left(S^{(n)}_{\vec{t}}\in \vec{A}\mid S^{(n)}(t_1)=+ \right)\P\left(S^{(n)}(t_1)=+ \right)\\
+&
\P\left(S^{(n)}_{\vec{t}}\in \vec{A}\mid S^{(n)}(t_1)=- \right)\P\left(S^{(n)}(t_1)=- \right),
\end{align*}
where, by symmetry, 
$\P\left(S^{(n)}(t_1)=+ \right)=\P\left( S^{(n)}(t_1)=- \right)=\frac{1}{2},$ and
\begin{align*}
\P\left(S^{(n)}_{\vec{t}}\in \vec{A}\mid S^{(n)}(t_1)=+ \right)
=\P\left( \Phi_{t_1}(N^{(n)})_{\vec{t}}\in \vec{A} \right).
\end{align*}
By Proposition~\ref{N_and_Phi},  $\Phi_{t_1}(N^{(n)})\overset{w}{\to} \Phi_{t_1}({\sf Poiss}(c))$ on $C[0, t_k]$;  composing with projections  yields
\begin{align*}
\P\left(S^{(n)}_{\vec{t}}\in \vec{A}\mid S^{(n)}(t_1)=+ \right)\overset{n\rightarrow \infty}{\longrightarrow} \P\left( \Phi_{t_1}({\sf Poiss}(c))_{\vec{t}}\in \vec{A} \right).
\end{align*}
Similarly,
\begin{align*}
\P\left(S^{(n)}_{\vec{t}}\in \vec{A}\mid S^{(n)}(t_1)=- \right)
=\P\left( -S^{(n)}_{\vec{t}}\in \vec{-A}\mid -S^{(n)}(t_1)=+ \right)
\end{align*}
tends to $\P\left( \Phi_{t_1}({\sf Poiss}(c))_{\vec{t}}\in -\vec{A}\right)$ as $n\rightarrow \infty$, hence
\begin{align*}
\P\left(S^{(n)}_{\vec{t}}\in \vec{A} \right) \overset{n\rightarrow \infty}{\longrightarrow} 
\frac{1}{2}\left( \Phi_{t_1}({\sf Poiss}(c))_{\vec{t}}\in \vec{A}\right)+\frac{1}{2}\left( \Phi_{t_1}({\sf Poiss}(c))_{\vec{t}}\in -\vec{A}\right).
\end{align*}

\medskip
\noindent\underline{Tightness:} We repeat the argument in the proof of Theorem \ref{Lower.Main.Thm} here. Use  that $\lim_{\eta\to+\infty} \sup_{n\geq 1} \P(S^{(n)}(0)>\eta)=0$ together with
\begin{align*} 
\lim_{\delta\downarrow 0} \sup_{n\geq 1} \P\left(\underset{0\leq t,s\leq t_k}{\max_{|t-s|\leq \delta}}|S^{(n)}(t)-S^{(n)}(s)|>\epsilon\right)=\lim_{\delta\downarrow 0}\P(\delta>\epsilon)=0,\ \forall \epsilon>0
\end{align*}
are sufficient for tightness  on $C([0,T])$. These are indeed satisfied because $S^{(n)}(0)=0,\ n\ge 1$ and because of the uniform Lipschitz-ness.
This completes the proof of  the theorem in the critical case. $\hfill\qed$

\smallskip
\noindent{\bf Note:} One can use any $\Phi_s,\, s>0$, instead of $\Phi_{t_1}$ (again, $s=0$ is excluded), without causing too much change;  then
\begin{equation*}
\P\left(X^{(n)}_{\vec{t}}\in \vec{A}\mid X^{(n)}(s)=+ \right)\overset{n\rightarrow \infty}{\longrightarrow} \P\left( \Phi_{s}({\sf Poiss}(c))_{\vec{t}}\in \vec{A} \right).
\end{equation*}

\begin{remark} We can also generalize the condition $A_n:=np_n=c$ a bit, namely, one can mimic the proof in Proposition \ref{turn_distribution} to show the following.\\
If the $A_n$ are stable in the sense that
$$
\sum_{k=an}^{bn} \frac{A_k-c}{k}\overset{n\to \infty}{\longrightarrow} 0,
\quad\text{that is}\quad 
\sum_{k=an}^{bn} \frac{A_k}{k}\overset{n\to \infty}{\longrightarrow}c\ln(b/a),\quad \forall 0<a<b<\infty,
$$
then the turns $N^{(n)}$ tend to a PPP with intensity $\lambda(x)=\frac{c}{x}\,\mathrm{d} x$. Hence the law of $S^{(n)}$ tends to that of the same zigzag process, i.e., we have the same scaling limit. This includes, for example, the following cases:
\begin{itemize}
\item $A_n\equiv c$ for all large $n$; 
\item $\lim\limits_{n\rightarrow \infty}A_n=c$; 
\item $A_n$ is periodic with average  period  $c$,
\end{itemize}
where $c$ is a positive constant.$\hfill\diamond$
\end{remark}

\subsection{Proof of Theorem \ref{Lower.Main.Thm} -- subcritical case}
Following the martingale approximation approach and again to prove all conditions at the end of Section \ref{mgale.appr}, we will prove the result in the following steps:
\begin{itemize}
\item[(i)] The $a_n\ge 1$ are well-defined; furthermore 
$a_n=o(n)$;
\item[(ii)] $\lim_{m\to\infty}v_m= \infty$;
\item[(iii)]  $a^2_n=o(v_n)$;
\item [(iv)] As $n\to\infty$,
\begin{equation}\label{drogin(a)}
\frac{1}{n}\overset{Z(n)}{\sum\limits_{i=1}}a_i^2\xi_i^21_{\{a_i^2\xi_i^2>n\varepsilon\}}\overset{L^1}{\longrightarrow}0.
\end{equation}
\end{itemize}
\medskip

\noindent\underline{Step (i).}
Since $1-x\le e^{-x}$, $x>0$, and $A_n$ is a monotone increasing sequence, we have 
\begin{align}\label{eqlogint}
e_{n,n+i}&=\prod_{k=n+1}^{n+i} (1-2p_k)\le e^{-(2p_{n+1}+\dots+2p_{n+i}) }
=e^{-2\left(\frac{A_{n+1}}{n+1}+\dots+\frac{A_{n+i}}{n+i}\right) } \le e^{-2A_{n+1}\left(\frac1{n+1}+\dots+\frac1{n+i}\right) }
\nonumber \\ &
\le e^{-2A_{n+1}\int_{1}^i\frac{{\rm d}x}{n+x} }
=e^{-2A_{n+1} \ln\frac{n+i}{n+1}}=\left( \frac{n+1}{n+i}
\right)^{2A_{n+1}},
\end{align}
since $\sum_{j=a}^b \frac 1{j}\ge \int_{a}^b \frac {{\rm d} x}{x}$ for all integers $a,b$ with  $b>a\ge 2$. So
\begin{align*}
a_n = & 1+\sum_{i=1}^\infty e_{n,n+i} 
\le 1+\sum_{i=1}^\infty \left( \frac{n+1}{n+i} \right)^{2A_{n+1}}
\le 1+\int_{0}^\infty \left( \frac{n+1}{n+x} \right)^{2A_{n+1}}{\rm d} x\\
= &1+ \frac{(n+1)^{2A_{n+1}}}{2A_{n+1}-1}\frac{1}{n^{2A_{n+1}-1}}=1+\frac{n}{2A_{n+1}-1}\left(1+\frac{1}{n}\right)^{2A_{n+1}}\\
=&1+\frac{n}{2A_{n+1}-1}\left(1+\frac{1}{n}\right)^{n\cdot 2p_{n+1}}\left(1+\frac{1}{n}\right)^{2p_{n+1}}
=1+\frac{n}{2A_{n+1}-1}e^{2p_{n+1}}(1+o(1))\\
=&1+\frac{n(1+O(p_{n+1}))(1+o(1))}{2A_{n+1}-1}
\end{align*}
for large $n$. Since $A_{n+1}\to\infty$, we have $a_n=o(n)$.\\

\medskip
\noindent\underline{Step (ii).} There exists an $N\ge 1$ such that for all $n\geq N$ we have $p_n\geq \frac{1}{n}$ and $q_n\ge \frac14$. Also, $a_n\geq 1$. Hence, for $m$ large enough, $v_m=\sum_{n=1}^m 4 a_n^2p_nq_n\geq \overset{m}{\underset{n=N}{\sum}}4p_n q_n\geq \sum_{n=N}^m\frac{1}{n}\to\infty$ as $m\rightarrow \infty$.\\

\medskip
\noindent\underline{Step (iii).} Since $p_n\downarrow 0$, one has
$$
p_n a_n=p_n\left[1+(1-2p_{n+1})+(1-2p_{n+1})(1-2p_{n+2})+\dots\right]
\ge p_n\sum_{k=0}^\infty (1-2p_n)^k=\frac 12.
$$
From its definition it follows that $v_n$ is monotone; we also know that $v_n\to\infty$.
Hence, by the Stolz--Ces\`aro Theorem\footnote{This is the discrete version of L'Hospital's rule --- see e.g.\ Problem~70 in~\cite{PSz98}.}, we have
\begin{align}\label{DiscrLop}
\limsup_{n\to\infty} \frac{a_n^2}{v_n}
&\le \limsup_{n\to\infty} \frac{a_{n}^2-a_{n-1}^2}{{v_{n}-v_{n-1}}}
= \limsup_{n\to\infty} \frac{(a_{n}+a_{n-1})(a_{n}-a_{n-1})}{4p_nq_n a_n^2}
\\
&\le \limsup_{n\to\infty} \frac{(a_{n}+a_{n-1})(a_{n}-a_{n-1})}{2a_n}
\le \frac{1}{2}\limsup_{n\to\infty} (a_{n}-a_{n-1}),\nonumber
\end{align}
since $4p_nq_n a_n^2= (2p_n a_n) \cdot q_n \cdot 2a_n$, and $q_n\to 1$, $p_n a_n\ge 1/2$,  $a_{n-1}\le a_n$.
Next,
\begin{equation}\label{next}
\begin{aligned}
a_n-a_{n-1}&=
\sum_{i=1}^\infty \left[e_{n,n+i}-e_{n-1,n-1+i}\right]
=
\sum_{i=1}^\infty \left[e_{n,n+i-1}(1-2p_{n+i})-(1-2p_n)e_{n,n-1+i}\right]\\
&=2\sum_{i=1}^\infty (p_n-p_{n+i})e_{n,n+i-1}.
\end{aligned}
\end{equation}
We have (e.g. by integrating by parts)
\begin{align*}
\sum_{i=1}^\infty \frac{i}{(n-1+i)^{2A_{n+1}+1}}
\le & 
\int_0^\infty \frac{x\,{\rm d}x}{(n-1+x)^{2A_{n+1}+1}}=
 \frac{1}{2A_{n+1}(2A_{n+1}-1) (n-1)^{2A_{n+1}-1}}.
\end{align*}
From the monotonicity of $p_n$ and $np_n$, we get $p_n\ge p_{n+i}$ and $\frac{p_{n+i}}{p_n}\ge \frac{n}{n+i}.$ Then, from~\eqref{eqlogint} and~\eqref{next}, it follows that\footnote{The last equality is elementary: $\left(1+\frac 1{n-1}\right)^{2A_{n+1}}=(1+\frac{1}{n-1})^{(n-1)2p_{n+1}} \cdot (1+\frac{1}{n-1})^{4p_{n+1}}= O(e^{2p_{n+1}})=O(1+2p_{n+1})=O(1+p_n)$. }
\begin{align*}
0&\le \frac{a_n-a_{n-1}}{2p_n}= \sum_{i=1}^\infty \left(1-\frac{p_{n+i}}{p_n}\right)e_{n,n+i-1}
\le \sum_{i=1}^\infty \frac{i}{n+i}\cdot 
\left(\frac{n+1}{n+i-1}\right)^{2A_{n+1}}\\
&\le \sum_{i=1}^\infty \frac{(n+1)^{2A_{n+1}}\cdot i}{(n-1+i)^{2A_{n+1}+1}} 
\le 
\frac{(n+1)^{2A_{n+1}}}{2A_{n+1}(2A_{n+1}-1) (n-1)^{2A_{n+1}-1}}\\
&=
\frac{(n-1)\left(1+\frac 1{n-1}\right)^{2A_{n+1}}}{2A_{n+1}(2A_{n+1}-1) }=\frac{(n-1)(1+O(p_n))}{4A_{n+1}^2 (1+o(1))}.
\end{align*}
Hence
\begin{align*}
0\le a_n-a_{n-1}\le 2p_n\frac{n+o(n)}{4A_{n+1}^2}
=\frac{A_n(1+o(1))}{2A_{n+1}^2}\le 
\frac{1+o(1)}{2A_{n+1}}\to 0,
\end{align*}
so the righthand side of~\eqref{DiscrLop} tends to zero.\\

\medskip
\underline{Step (iv).}
We show how, in our case, (iii)  implies (iv). Since $a_i$ increases in $i$, and $|\xi_i|\le 2$ gives $a_i^2\xi_i^2\leq 4a_i^2$,  we have 
$$
\{i: a_i^2\xi_i^2 \geq \varepsilon n \}\subset \{i: 4 a_i^2\geq \varepsilon n \}=\left\{i: i\geq (f^2(n))^{-1}
\left ( \varepsilon n/4\right)\right\},
$$
where $f(\cdot)$ is the linear interpolation such that $f(i)=a_i$, and here $v$ can be treated also as a positive strictly increasing function  on $[0,\infty)$ with $v(m)=v_m$, so both $(f^2)^{-1}, v^{-1}$ are well-defined, positive and strictly increasing. Using that $Z(n)=v^{-1}(n)$,  Drogin's condition \eqref{drogin(a)} will be verified if we show that
\begin{equation}\label{cooling.iv.7}
 v^{-1}(n)< (f^2(n))^{-1} \left(\varepsilon n/4\right),
\end{equation}
for $n$ large enough,  because then,  for $n$ large enough, $a_i^2\xi_i^2<\varepsilon n$ for  $i\leq Z(n)$, that is, $$\mathbf{1}_{\{a_i^2\xi_i^2>n\varepsilon\}}=0,\  1\leq i\leq Z(n).$$ 

Since  $a^2_m=o(v_m)$, i.e. $f^2(x)=o(v(x))$,  for this $\varepsilon$, there is an $M$ such that for  $l\geq M$, $\ds{f^2(l)}/{v(l)}<{\varepsilon}/4$, and for such an $M$, there is an $N$ such that for  $x\geq N$ we have $ v^{-1}(x)\geq M$. Hence, 
$$
\frac{f^2(v^{-1}(x))}{v(v^{-1}(x))}=\frac{f^2(v^{-1}(x))}{x}<\frac{\varepsilon}4, \quad \forall x\geq N,
$$
that is, \eqref{cooling.iv.7} holds for $n\ge N$. This completes the proof of (iv) and that of the theorem altogether.

\subsection{Proof of Theorem \ref{invar_nheatcool}}
We again use the martingale approximation approach of section \ref{mgale.appr}. Notice that 
\begin{equation}\label{a_n}
a_n=1+\overset{\infty}{\underset{i=0}{\sum}}\overset{n+i}{\underset{k=n+1}{\prod}}(1-2p_k).
\end{equation}
Without the loss of generality, we may assume that $0<a<p_n<b<1$. Then $r:=\max\{|2a-1|, |2b-1|\}<1,$ and
$$
\left|\overset{n+i}{\underset{k=n+1}{\prod}}(1-2p_k)\right|\leq r^i,
$$
which is why the sum in \eqref{a_n} is well-defined, that is, the $a_n$ are well-defined, for all $n\ge 1$. Furthermore,
\begin{equation*}
\begin{aligned}
&1+\overset{\infty}{\underset{i=0}{\sum}}\overset{n+i}{\underset{k=n+1}{\prod}}(1-2p_k)
\leq &1+\overset{\infty}{\underset{i=1}{\sum}}\overset{n+i}{\underset{k=n+1}{\prod}}|1-2p_k|
\leq &1+\overset{\infty}{\underset{i=1}{\sum}}r^i=\frac{1}{1-r},
\end{aligned}
\end{equation*}
which gives $|a_n|\leq \frac{1}{1-r}$ for all $n$.

Next, we prove that $v(m)\overset{m\rightarrow\infty}{\longrightarrow}\infty$, or equivalently, that $\sigma_n\overset{n\rightarrow\infty}{\longrightarrow} \infty$:\\
(i) If $p_n\leq 1/2$, $\forall n$, then $a_n>1$, $\forall n$, and we immediately have $v(m)\overset{m\rightarrow\infty}{\longrightarrow}\infty$.\\
(ii) Otherwise we have a subsequence $\{p_{n_k}\}_{n_k}$ such that $n_{k+1}-n_k>1$ and $p_{n_k}>1/2$, for all $n_k$. Notice that, by \eqref{a_n} and a direct computation, we have
$$(a_{n-1}-1)=(a_{n}-1)(1-2p_n),$$
and thus for the subsequence one has
$$(a_{n_k-1}-1)=(a_{n_k}-1)(1-2p_n).$$
So the two subsequences $\{a_{n_k-1}-1\}_{k\ge 1}, \{a_{n_k}-1\}_{k\ge 1}$ have opposite signs, hence we have a subsequence of $\{a_n\}_{n\ge 1}$ such that its terms are larger than $1$.  Consequently,  $v(m)\overset{m\rightarrow\infty}{\longrightarrow}\infty$.\\
Moreover, the condition that $\ds \lim_{n\to\infty} \frac 1n \sum_{i=1}^{Z(n)} a_i^2 \xi_i^2 \mathbf{1}_{\{a_i^2 \xi_i^2 >n\epsilon\}}=0$ is easy to verify, since our~$a_n$ are bounded.\\
In conclusion, the answers to (M) and to (INV.M) are both in the affirmative, yielding the invariance principle \eqref{Wiener.limit}.

\subsection{Proof of Theorem \ref{WLLN.by.comparison}}
Fix $a>0$ and let $N=N(a)$ be such that $a/N\le 1/2$ and that also $a/n<p_n$ holds for all $n>N$. 
Define $\hat p_n$ so that it coincides with~$p_n$ for $n\le N$ and $\hat p_n=a/n$ for $n>N$.
Let $\hat S$ denote the walk for the sequence $(\hat p_n)$, and note that this walk depends on the parameter $a>0$. By the monotonicity established in the proof of Theorem \ref{Lower.Main.Thm},
$$
\Var\left(\frac{S_{n}}{n}\right)\le \Var\left(\frac{\hat S_{n}}{n}\right),\ n\ge 1.
$$
In~\cite{EV2018} it was shown that 
$$
\lim_{n\to\infty}\Var\left(\frac{\hat S_{n}}{n}\right)=\frac{1}{2a+1}
\quad\Longrightarrow\quad 
\limsup_{n\to\infty}\Var\left(\frac{S_{n}}{n}\right)\le\frac{1}{2a+1}.$$
Since $a>0$ was arbitrary, 
$$
\lim_{n\to\infty}\Var\left(\frac{S_{n}}{n}\right)=0,
$$ 
implying WLLN. $\qed$

\subsection{Proof of Theorem \ref{recmix}}
We first need a lemma.
\begin{lemma}\label{49.Lemma}
For every $m$, $n$ and $\ell$ such that $\ell>n\ge m\ge 1$ we have that \linebreak $\P(S_{\ell}\le S_n\mid Y_m)\ge \frac12(1-|e_{m,n+1}|)$.
\end{lemma}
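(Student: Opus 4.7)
The plan is to exploit the product structure of $S_{\ell}-S_{n}$, factoring out $Y_{n+1}$ so that the remaining factor is independent of $Y_{n+1}$ (given $Y_m$), and then use that conditional on the magnitude of this remaining factor, the sign of $S_{\ell}-S_{n}$ is controlled by a single coin, namely $Y_{n+1}$.

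Concretely, the first step is to write, using $Y_{n+k}=Y_{n+1}(-1)^{W_{n+2}+\cdots+W_{n+k}}$ for $k\ge 1$,
\begin{equation*}
S_{\ell}-S_{n}=\sum_{k=n+1}^{\ell} Y_k = Y_{n+1}\,\widetilde{T},\qquad
\widetilde{T}:=1+\sum_{k=n+2}^{\ell}(-1)^{W_{n+2}+\cdots+W_{k}}.
\end{equation*}
Since $\widetilde{T}$ is a deterministic function of $W_{n+2},\dots,W_{\ell}$, and since $Y_{n+1}$ is a function of $Y_m$ together with $W_{m+1},\dots,W_{n+1}$, the $W$'s being independent makes $\widetilde{T}$ independent of $(Y_m,Y_{n+1})$.

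The second step is to condition on $\widetilde T$ and $Y_m$ and observe that the event $\{S_{\ell}\le S_{n}\}=\{Y_{n+1}\widetilde T\le 0\}$ reduces to a statement about the single Bernoulli-type variable $Y_{n+1}$. If $\widetilde T>0$, the event is $\{Y_{n+1}=-1\}$; if $\widetilde T<0$, it is $\{Y_{n+1}=+1\}$; and if $\widetilde T=0$, the event occurs with probability $1$. In every case, using the identity $\P(Y_{n+1}=\pm 1\mid Y_m)=\tfrac12(1\pm Y_m e_{m,n+1})$ from \eqref{onemore}, we get
\begin{equation*}
\P(S_{\ell}\le S_{n}\mid Y_m,\widetilde T)\ \ge\ \min\bigl\{\P(Y_{n+1}=1\mid Y_m),\,\P(Y_{n+1}=-1\mid Y_m)\bigr\}\ =\ \tfrac12\bigl(1-|e_{m,n+1}|\bigr).
\end{equation*}
Averaging over $\widetilde T$ then yields the claim.

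I do not anticipate any real obstacle: the whole argument is a conditional independence/coupling observation, and the lower bound is saturated precisely when $\widetilde T$ is a.s.\ nonzero and $Y_m e_{m,n+1}$ has the ``wrong'' sign. The only care needed is to make sure the decomposition of $S_{\ell}-S_{n}$ correctly isolates $Y_{n+1}$ (so that $e_{m,n+1}$, and not $e_{m,n}$, appears), which is why I factor from the index $n+1$ rather than from $n$.
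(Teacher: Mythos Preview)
Your proof is correct and close in spirit to the paper's, but the decomposition is organised differently. The paper conditions on $Y_{n+1}$: it first establishes the symmetry inequality
\[
\tfrac12\,\P(S_{\ell}\le S_n\mid Y_{n+1}=-1)+\tfrac12\,\P(S_{\ell}\le S_n\mid Y_{n+1}=+1)\ge \tfrac12,
\]
arguing that a walk restarted at time $n$ with a fresh symmetric first step lands at or below its starting point with probability at least $1/2$, and then combines this with $\P(Y_{n+1}=\pm 1\mid Y_m)=\tfrac12(1\pm Y_m e_{m,n+1})$ via the bound $\sum_j a_j b_j\ge(\min_j b_j)\sum_j a_j$. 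You instead condition on the complementary piece $\widetilde T$, which makes the event $\{Y_{n+1}\widetilde T\le 0\}$ a single coin flip and removes the need to state the symmetry inequality as a separate step; the symmetry is absorbed into the observation that $\widetilde T$ is independent of $(Y_m,Y_{n+1})$. Both routes exploit the same independence structure and arrive at the same $\min$ bound, but your packaging is a touch more direct.
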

\begin{proof}[Proof of Lemma] We do the proof for $Y_m=1$, for $Y_m=-1$ the proof is essentially the same. Writing out $e_{m,n+1}=E(Y_{n+1}\mid Y_m=1),$ one obtains
\begin{equation}\label{49}
\P(Y_{n+1}=1\mid Y_m=1)=\frac{1+e_{m,n+1}}{2};\hspace{3mm} \P(Y_{n+1}=-1\mid Y_m=1)=\frac{1-e_{m,n+1}}{2}.
\end{equation}
Next, we claim that
\begin{equation}\label{Stas.doesnot.like}
\frac12 \,\P(S_{\ell}\le S_n\mid Y_{n+1}=-1)+\frac12 \,\P(S_{\ell}\le S_n\mid Y_{n+1}=+1)\ge \frac 12.
\end{equation}
Indeed, let us start our walk at time $n$ instead of time zero at the location $S_n$, such that its first step is random and equals $1$ or $-1$ with equal probabilities. Then the LHS of~\eqref{Stas.doesnot.like} is the probability that $n-\ell$ times later this walk ends up at a position which is not larger than its initial position. By symmetry, this value is at least $1/2$.
By \eqref{49} and \eqref{Stas.doesnot.like} and Markov property,
\begin{align*}
&\P(S_{\ell}\le S_n\mid Y_m=1)=\sum_{j=\pm 1}\P(S_{\ell}\le S_n, Y_{n+1}=j\mid Y_m=1)\\
&\ \ =
 \sum_{j=\pm 1}\P(S_{\ell}\le S_n\mid Y_{n+1}=j)\,\P(Y_{n+1}=j|Y_m=1)\\
&\ \ \ge \min_{j=\pm 1} \P(Y_{n+1}=j|Y_m=1) \sum_{j=\pm 1} \P(S_{\ell}\le S_n\mid Y_{n+1}=j)\geq \frac{1-|e_{m,n+1}|}{2},
\end{align*}
as claimed.
\end{proof}

We now turn to the proof of Theorem \ref{recmix} and show e.g.\ that $\P(S_n<0\ \mathrm{i.o.}\mid \mathcal{F}_1)=1$; one can similarly show that $\P(S_n>0\ \mathrm{i.o.}\mid \mathcal{F}_1)=1$. It turns out that is enough to construct a sequence $(\ell_k)_{k\ge 0}$ such that $\P(S_{\ell_{i+1}}<0\mid \mathcal{F}_{\ell_{i}})\ge r$ holds with some $r>0$, and the statement then follows from the extended Borel-Cantelli Lemma. Below we define such a sequence recursively, for $r=1/6$. 

Let $\ell_0:=1$. Once $\{\ell_i,\ 0\le i\le k\}$ have been constructed, we construct $\ell_{k+1}$ as follows. By mixing, we can pick an  $N_{k}$ (depending on $\ell_k$ only) such that $|e_{\ell_{k},\ell}|<1/3$ for all $\ell\ge N_{k}$. By Lemma \ref{49.Lemma} then,  for  all $\ell\ge N_{k}$,
\begin{align}\label{eqq1}
\P(S_{\ell}<S_{\ell_k}\mid \mathcal{F}_{\ell_k})
\ge 1/3.
\end{align}
Using that $|S_{\ell_k}|\le \ell_k$ along with Assumption~\ref{spr}, 
$$
\limsup_{\ell\to\infty} \P(0\le S_{\ell}< S_{\ell_k}\mid \mathcal{F}_{\ell_k})\le \lim_{\ell\to\infty} \P(0\le S_{\ell}< \ell_k\mid \mathcal{F}_{\ell_k})=0,\ \mathrm{a.s.}
$$
Hence, $\exists\ \ell_{k+1}>\max\{\ell_k, N_k\}$ that depends only on $\ell_k$ such that
\begin{align}\label{eqq2}
\P(0\le S_{\ell_{k+1}}< S_{\ell_k}\mid\mathcal{F}_{\ell_k})\le 1/6.
\end{align}
By combining \eqref{eqq1} and \eqref{eqq2} we conclude that
$$
\P(S_{\ell_{k+1}}<0\mid\mathcal{F}_{\ell_k})\ge  1/3-1/6=1/6.
$$
The sought sequence $(\ell_k)_{k\ge 0}$ has thus been constructed.$\hfill\qed$

\subsection{Proof of Theorem \ref{rec.thm}}

Let $\tau_0:=0$ and  $$\tau_{n}:=\inf\{m> 2\tau_{n-1}:\ Y_m=-1\},\quad n=1,2,\dots.
$$
Since $\sum p_n=\infty$, by the Borel-Cantelli Lemma, there are infinitely many turns. As a result, with probability $1$,  all $\tau_n$ are  well-defined and finite. Clearly, $\tau_n\to\infty$, \ as $n\to\infty$.

Let 
$$
A_n:=\{Y_i=-1,\  \text{for all } i\in[\tau_n, 2\tau_{n}]\}\in \mathcal F_{2\tau_{n}}=:\mathcal G_n,
$$ 
and note that 
$
A_n\subseteq \{S_{2\tau_{n}}\le 0\}=:B_n.
$
If we show that $\sum_n \P(A_n\mid \mathcal G_{n-1})=\infty$ then by the extended Borel-Cantelli lemma (see Corollary 5.29 in~\cite{B1992}), it follows that $\P(A_n\text{ i.o.})=1$; hence $\P(B_n\text{ i.o.})=1$, and so $\P(S_n\le 0\text{ i.o.})=1$. 

Now, for $n\ge n_0$, 
$$
\P(A_n\mid \mathcal G_{n-1},\tau_n=k)=
\left(1-\frac c{k+1}\right)
\left(1-\frac c{k+2}\right)
\dots{}\left(1-\frac c{2k}\right),
$$ 
when $k$ is admissible (i.e. the condition has positive probability). Since obviously $\tau_n\ge n$, we know that this is never the case for $k<n$.

Since the product on the right hand side tends\footnote{A more detailed calculation shows that the RHS equals $2^{-c}\left[1-\frac{c(c-1)}{4k}+O(k^{-2})\right]$ but we do not need it here.} to $2^{-c}$, as $k\to\infty$, and  only $k$'s for which $k\ge n$ are admissible,
$$
\P(A_n\mid \mathcal G_{n-1},\tau_n=k)\ge 2^{-c-1}
$$ 
holds  for all large enough $n$ and admissible $k$'s.
Thus $
\P(A_n\mid \mathcal G_{n-1})\ge 2^{-c-1}
$
holds for all large enough $n$, and we are done.
  A completely symmetric argument shows that also $\P(S_n\ge 0\text{ i.o.})=1$, thus proving the recurrence of the walk $S$.

A similar proof, left to the reader, establishes that the scaling limit (zigzag process) is recurrent as well.

\section{Appendix} Here we invoke some background on random measures that we utilized in the proof of Proposition \ref{N_and_Phi}. Much more material on random measures can be found in  \cite{KO2017}.

Assume that we are given  a complete separable metric space $S$. 
\begin{definition}[Dissecting subsets]
Denote by $\widehat{S}$  the set of all bounded Borel sets of~$S$. A subset $\Ic \subset \widehat{S}$ is called \emph{dissecting} if
\begin{itemize}
\item[(a)] every open set $G\subset S$ is a countable union of sets in $\Ic$;
\item[(b)] every set $B\in \widehat{S}$ is covered by finitely many sets in $\Ic$.
\end{itemize}
\end{definition} 
The following lemma is a useful result concerning  the weak convergence of random measures. (The measures are equipped with the  vague topology, recall Notation 1.)

\begin{lemma}[Theorem 4.11 in \cite{KO2017}]\label{lemma_vague}
Let $\xi, (\xi_n)_n$ be random measures on $S$ and let $E$ denote the expectation for $\xi$.
Furthermore, let 
\begin{enumerate}
\item  $\widehat{C}_s$ be the set of all continuous compactly supported functions on $S$; 
\item  $\widehat{S}_{E\xi}$  be the class of all bounded sets $A \subset S$ with $E\xi(\partial A)=0$;
\item $\widehat{\Ic}_+$  be the set of all non-negative simple $\Ic-measurable$ functions for a fix dissecting semi-ring $\Ic\subset \widehat{S}_{E\xi}$.
\end{enumerate}
 Then, as $n\to\infty$, 
$\xi_n  \overset{vd}{\longrightarrow} \xi$ if and only if $\xi_n(f)\overset{d}{\longrightarrow} \xi(f)$ holds either for all $ f\in \widehat{C}_s$ or for all $f\in\widehat{\Ic}_+$.
\end{lemma}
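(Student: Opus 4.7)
The plan is to exploit the standard toolbox for weak convergence of random measures: the continuous mapping theorem, the Cram\'er--Wold device, and an approximation argument based on the dissecting property of~$\Ic$.

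For the ``only if'' direction, I would fix $f\in \widehat{C}_s$ and observe that the integration functional $\mu\mapsto \mu(f)$ is continuous on the space of locally finite measures endowed with the vague topology; the continuous mapping theorem then immediately yields $\xi_n(f)\overset{d}{\longrightarrow}\xi(f)$. For $f=\sum_{i=1}^k c_i\mathbf{1}_{A_i}\in \widehat{\Ic}_+$, the integration functional fails to be continuous in general, but it is continuous at every $\mu$ that puts no mass on $\bigcup_i\partial A_i$; since $A_i\in \Ic\subset \widehat{S}_{E\xi}$ gives $E\xi(\partial A_i)=0$, we have $\xi(\partial A_i)=0$ almost surely, and the continuous mapping theorem again delivers $\xi_n(f)\overset{d}{\longrightarrow}\xi(f)$.

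For the converse, I would invoke Cram\'er--Wold: both $\widehat{C}_s$ and $\widehat{\Ic}_+$ are closed under non-negative linear combinations, so marginal convergence of $\xi_n(f)$ for all $f$ in such a class upgrades automatically to joint convergence of $(\xi_n(f_1),\dots,\xi_n(f_k))$ for every finite collection. Tightness of $(\xi_n)$ in the vague topology then follows from the real-line tightness of $(\xi_n(B))$ for each bounded $B$ in a dissecting family, combined with the Prohorov-type criterion for random measures; uniqueness of the vague limit on a convergence-determining class gives $\xi_n\overset{vd}{\longrightarrow}\xi$.

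The main obstacle is to argue that convergence on either of the two classes alone is sufficient---a priori the two sets of test functions could induce different equivalences. The key here is the dissecting property of~$\Ic$: every $f\in \widehat{C}_s$ can be sandwiched between two simple functions in $\widehat{\Ic}_+$ whose difference has arbitrarily small $E\xi$-integral (using that the support of $f$ is covered by finitely many sets of $\Ic$ whose boundaries carry no $E\xi$-mass), while conversely every indicator $\mathbf{1}_A$ with $A\in \Ic$ is sandwichable between continuous compactly supported functions differing by functions of arbitrarily small $E\xi$-integral. Combining this bilateral approximation with the two steps above closes the loop between the $\widehat{C}_s$ and $\widehat{\Ic}_+$ formulations and yields the stated equivalence.
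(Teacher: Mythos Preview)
The paper does not prove this lemma at all: it is quoted verbatim from Kallenberg's book (Theorem~4.11 in~\cite{KO2017}) as a piece of background in the Appendix, and is used as a black box in the proof of Proposition~\ref{N_and_Phi}. There is therefore no proof in the paper to compare your proposal against.

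That said, your sketch follows the standard architecture one finds in Kallenberg's treatment: continuous mapping for the forward direction, Cram\'er--Wold plus a Prohorov-type tightness criterion for the converse, and a mutual approximation between $\widehat{C}_s$ and $\widehat{\Ic}_+$ via the dissecting property. The outline is correct in spirit, though as written it remains a sketch rather than a proof---in particular, the tightness step (``real-line tightness of $(\xi_n(B))$ \dots\ combined with the Prohorov-type criterion'') hides nontrivial work, and the sandwich approximation requires care to ensure the approximating simple functions can be chosen with boundaries of zero $E\xi$-mass. For the purposes of this paper none of that detail is needed, since the result is simply imported from the reference.
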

\vspace{0mm}
\noindent{\bf Acknowledgements:} We are grateful to  M\'arton Bal\'azs,  Bertrand Cloez, Edward Crane, Laurent Miclo and Sean O'Rourke for helpful discussions, and also to an anonymous referee for his/her close reading of the manuscript. S.V. thanks the University of Colorado for the hospitality during his visit in May 2018.


\begin{thebibliography}{EVW19}
\bibitem{BBC2017} Bena\"{\i}m, M.; Bouguet, F.; Cloez, B.
Ergodicity of inhomogeneous Markov chains through asymptotic pseudotrajectories. 
Ann. Appl. Probab. 27 (2017), no. 5, 3004--3049.
\bibitem{BC2018} Bouguet, F.; Cloez, B. Fluctuations of the empirical measure of freezing Markov chains. Electron. J. Probab. 23 (2018), Paper No. 2.
\bibitem{B1992} Breiman, L. Probability. Corrected reprint of the 1968 original. Classics in Applied Mathematics, 7. Society for Industrial and Applied Mathematics (SIAM), Philadelphia, PA, 1992.
\bibitem{Da1970} Davydov, Ju. A. The invariance principle for stationary processes. (Russian) Teor. Verojatnost. i Primenen. 15 1970 498--509. 
\bibitem{Da1973} Davydov, Ju. A. Mixing conditions for Markov chains. (Russian) Teor. Verojatnost. i Primenen. 18 (1973), 321--338. 
\bibitem{D1972} Drogin, R. An invariance principle for martingales. Ann. Math. Statist. 43 (1972), 602--620.
\bibitem{EV2018} Engl\"ander, J.; Volkov, S. Turning a coin over instead of tossing it. J. Theor. Probab. 31 (2018), 1097--1118.
\bibitem{KO2017} Kallenberg, O. Random measures, theory and applications.  Probability Theory and Stochastic Modeling, 77. Springer, Cham, 2017.
\bibitem{KS1991} Karatzas, I.; Shreve, S. E.
Brownian motion and stochastic calculus. 
Second edition. Graduate Texts in Mathematics, 113. Springer-Verlag, New York, 1991.
\bibitem{PSz98} P\'olya, G.; Szeg\H{o}, G. Problems and theorems in analysis. I. Series, integral calculus, theory of functions. Translated from the German by Dorothee Aeppli. Reprint of the 1978 English translation. Classics in Mathematics. Springer-Verlag, Berlin, 1998.
\end{thebibliography}
\end{document}